\definecolor{darkgreen}{rgb}{0.5,0.25,0}
\definecolor{darkblue}{rgb}{0,0,1}
\definecolor{answerblue}{rgb}{0,0,0.75}
\numberwithin{equation}{section}
\newtheorem{theorem}{Theorem}[section]
\newtheorem{lemma}[theorem]{Lemma}
\newtheorem{proposition}[theorem]{Proposition}
\theoremstyle{definition}\newtheorem{definition}{Definition}[section]
\theoremstyle{remark}\newtheorem{remark}{Remark}[section]
\newcommand{\R}{\mathbb{R}}
\newcommand{\seq}[1]{\left\{#1\right\}}
\newcommand{\abs}[1]{\left|#1\right|}
\newcommand{\norm}[1]{\left\| #1\right\|}
\newcommand{\totau}{\overset{\tau\downarrow 0}{\longrightarrow}}
\newcommand{\ton}{\overset{n\uparrow \infty}{\longrightarrow}}
\newcommand{\weakn}{\overset{n\uparrow \infty}{\rightharpoonup}}
\renewcommand{\P}{\mathbb P} 
\newcommand{\EE}{\mathbb E} 
\newcommand{\Lp}[1]{L^{#1}(M,h)} 
\newcommand{\Ltwo}{\Lp{2}} 
\DeclareMathOperator{\Div}{div}
\newcommand{\Divh}{\Div_h}
\newcommand{\Grad}{\mbox{grad}_h\, }
\newcommand{\action}[2]{\left\langle #1,#2 \right\rangle}
\DeclareMathOperator{\supp}{supp}
\DeclareMathOperator{\ind}{Ind}
\DeclareMathOperator{\codim}{codim}
\title[Stochastic continuity equations]
{Well-posedness of stochastic continuity equations 
on Riemannian manifolds}
\author[L. Galimberti]{Luca Galimberti}
\address[Luca Galimberti]
{\newline Department of Mathematical Sciences
\newline NTNU Norwegian University of Science and Technology
\newline N--7491 Trondheim, Norway} 
\email[]{luca.galimberti@kcl.ac.uk}
\author[K. H. Karlsen]{Kenneth H. Karlsen}
\address[Kenneth Hvistendahl Karlsen]
{\newline Department of mathematics
\newline University of Oslo
\newline P.O. Box 1053,  Blindern
\newline N--0316 Oslo, Norway} 
\email[]{kennethk@math.uio.no}
\subjclass[2020]{Primary: 60H15, 35L02; Secondary: 58J45, 35D30}
\keywords{Stochastic continuity equation, Riemannian manifold, 
hyperbolic equation, non-smooth velocity field, 
weak solution, existence, uniqueness}
\thanks{Research Council of Norway supported 
this work through the projects Stochastic 
Conservation Laws (250674) and (in part) 
Waves and Nonlinear Phenomena (250070)}
\date{\today}
\begin{document}

\begin{abstract}
We analyze continuity equations with Stratonovich stochasticity, 
$\partial \rho+ \Div_h \left[ \rho \circ\left(u(t,x)+\sum_{i=1}^N 
a_i(x) \dot W_i(t) \right) \right]=0$, defined 
on a smooth closed Riemannian manifold $M$ with metric $h$. 
The velocity field $u$ is perturbed by Gaussian 
noise terms $\dot W_1(t),\ldots,\dot W_N(t)$ 
driven by smooth spatially dependent vector 
fields $a_1(x),\ldots,a_N(x)$ on $M$. 
The velocity $u$ belongs to $L^1_t W^{1,2}_x$ 
with $\Div_h u$ bounded in $L^p_{t,x}$ for $p>d+2$, where 
$d$ is the dimension of $M$ (we do not assume 
$\Div_h u \in L^\infty_{t,x}$). For carefully 
chosen noise vector fields $a_i$ (and the number $N$ of them), 
we show that the initial-value problem is 
well-posed in the class of weak $L^2$ solutions, 
although the problem can be ill-posed in 
the deterministic case because of concentration 
effects. The proof of this ``regularization by noise" result 
is based on a $L^2$ estimate, which is obtained 
by a duality method, and a weak compactness argument.
\end{abstract}

\maketitle

\vspace{-0.5cm}
{\small \tableofcontents}



\section{Introduction and main results}\label{sec: Introduction}
One of the basic equations in fluid 
dynamics is the continuity equation
$$
\partial_t \rho +\Div \left(u \rho \right) =0 \quad 
\text{in $[0,T]\times \R^d$},
$$
where $u=u(x,t)$ is the velocity field describing the flow 
and $\rho$ is the fluid density. 
It encodes the familiar law of 
conservation of mass. Mathematically speaking, if 
the velocity field $u$ is Lipschitz continuous, 
then the continuity equation (and the related transport 
equation) can be solved explicitly 
by means of the method of characteristics. Unfortunately, 
in realistic applications, the velocity is much rougher than 
Lipschitz, typically $u$ belongs to some spatial Sobolev space 
and one must seek well-posedness of the continuity equation in 
suitable classes of weak solutions. 
Well-posedness of weak solutions follows from 
the theory of renormalized solutions 
\cite{DL89,Ambrosio:2004aa,Lions:NSI,Lions:NSII}, 
assuming that $u\in L^1_tW^{1,1}_x$ (or even $L^1_tBV_x$) 
with $\Div u\in L^\infty_{t,x}$. A key step in this theory 
is to show that a weak solution $\rho$ is 
also a renormalized solution, that is, $S(\rho)$ 
is a weak solution for all ``reasonable" nonlinear 
functions $S:\R\to \R$.  It is the validity 
of this chain rule property that asks 
for $W^{1,1}_x$ (or $BV_x$) regularity of the velocity $u$.
The assumption that $\Div u$ is bounded 
cannot be relaxed (unbounded divergence leads 
to concentration effects).

Recently there has been significant interest in studying 
fluid dynamics equations supplemented with stochastic 
terms. This (renewed) interest is partly motivated 
by the problem of turbulence. Although the basic 
(Navier-Stokes) equations are deterministic, some of 
their solutions exhibit wild random-like behavior, 
with the basic problem of existence and uniqueness of 
smooth solutions being completely open. There is a vague hope 
that ``stochastic perturbations" can render some of the 
models ``well-posed" or  ``better behaved", thereby 
providing  some insight into the onset of turbulence. 
We refer to \cite{Flandoli:2011vn} for a general 
discussion of ``regularization by noise"  phenomena, 
which has been a recurring theme in many recent 
works on stochastic transport and continuity equations of the form
\begin{equation}\label{eq:intro-sTC-Euclid}
	\partial \rho+ \nabla \rho \circ 
	\left (u + a \dot W \right)=0, 
	\quad 
	\partial \rho + 
	\Div \left[ \rho \circ \left (u  + a\dot W\right)\right] =0,
\end{equation}
posed on $\R^d$ with a given initial condition 
$\rho\big |_{t=0}=\rho_0$. Here $W=W(t)$ is 
a Wiener process with noise coefficient $a$ and 
the symbol $\circ$ refers to the Stratonovich stochastic 
differential. It is not our purpose here to review 
the (by now vast) literature on regularization by 
noise (i.e., improvements in regularity, existence, 
uniqueness, stability, etc., induced by noise). 
Instead we emphasize some of the 
papers that develop an analytical (PDE) approach 
\cite{Attanasio:2011fj,Beck:2019,Gess:2018aa,Gess:2019aa,Punshon-Smith:2017aa}, 
related to the one taken in the present paper. 
There is another flexible approach that study 
the stochastic flow associated with the 
SPDE \eqref{eq:intro-sTC-Euclid}, 
relying on regularizing properties of the 
corresponding SDE to supply a flow that 
is more regular than its coefficient $u$, see 
e.g.~\cite{Flandoli-Gubinelli-Priola} for 
the stochastic transport equation and 
\cite{Neves:2015aa,Neves:2016aa} for the 
stochastic continuity equation. A good part 
of the recent literature is motivated by the 
article \cite{Flandoli-Gubinelli-Priola} of 
Flandoli, Gubinelli, and Priola, which in turn built upon an 
earlier work by Davies \cite{Davies}. One of 
the main results in \cite{Flandoli-Gubinelli-Priola} 
is that if $u$ is $x$-H\"older continuous, then 
the initial-value problem for the transport equation 
in \eqref{eq:intro-sTC-Euclid} is well-posed under the 
weak assumption that $\Div u\in L^2$. Most of the works 
just cited assume that the noise coefficient $a$ is constant. 
Well-posedness results for continuity equations 
with $x$-dependent noise coefficients can be 
found in \cite{Punshon-Smith:2017aa} 
(see also \cite{Punshon-Smith:2018aa} and \cite{Holden:2020aa}). 
Subtle regularization by noise results for some 
nonlinear SPDEs can be found \cite{Gess:2018aa,Gess:2019aa}. 
Let us also recall that first order stochastic 
partial differential equations (SPDEs) 
with ``Lipschitz coefficients" have been deeply 
analyzed in Kunita's works \cite{Chow:2015aa,Kunita}.   

In recent years there has been a growing interest in 
analyzing the basic equations of fluid dynamics 
on Riemannian manifolds instead of flat domains, with 
the nonlinearity (curvature) of the domains 
altering the underlying dynamics in nontrivial ways 
(see e.g.~\cite{Amorim:2005aa,Ben-Artzi:2007aa,Rossmanith:2004aa}). 
A Riemannian manifold provides a more general 
framework in which to study fluid dynamics than 
a ``physical surface", with the relevant quantities 
becoming independent of coordinates and a distance function. 
Partial differential equations (PDEs) on manifolds 
arise in many applications, including geophysical 
flows (atmospheric models of fluids constrained to 
flow on the surface of a planet) and general relativity in which 
the Einstein-Euler equations are posed on a manifold with the metric 
being one of the unknowns. Transport equations on manifolds have 
been analyzed in \cite{Dumas:1994aa} and \cite{Fang-Li-Luo}, where 
the DiPerna-Lions theory of weak solutions is 
extended to (some classes of) Riemannian manifolds. 

The mathematical literature on SPDEs on manifolds is at the 
moment scanty, see \cite{Elliott:2012aa,GK0,Gyongy:1993aa,Gyongy:1997aa} 
for equations in which the noise enters the 
equation as an It\^{o} source term. In \cite{GK} we established 
the renormalization property for weak solutions of 
stochastic continuity equations on manifolds, under 
the assumption that the irregular velocity field $u$ 
belongs to $L^1_tW^{1,2}_x$. Corollaries of this 
result included $L^2$ estimates and uniqueness 
(provided $\Div_h u\in L^\infty$). 
The purpose of the present paper is to establish 
the existence and uniqueness of weak $L^2$ 
solutions without the assumption $\Div_h u\in L^\infty$. 

To be more precise, we are given a 
$d$-dimensional ($d\geq 1$) smooth, closed, 
and compact manifold $M$, endowed with a (smooth) Riemannian 
metric $h$. We are interested in the initial-value problem 
for the stochastic continuity equation
\begin{equation}\label{eq:target}
d\rho + \Divh(\rho\, u)dt 
+ \sum_{i=1}^N \Divh ( \rho \,a_i)\circ dW^i(t) 
= 0\, \;\;\; \text{in } \, [0,T]\times M, 
\end{equation}
where $T>0$ denotes a fixed final time, 
$u:[0,T] \times M\to TM$ is a given time-dependent 
irregular vector field on $M$, $a_1,\ldots ,a_N:M\to TM$ 
are suitable smooth vector fields on $M$ (to be fixed later), 
$W^1,\cdots ,W^N$ are independent real-valued Brownian motions, 
and the symbol $\circ$ means that the equation is understood 
in the Stratonovich sense. We recall that for a vector 
field $X$ (locally of the form $X^j \partial_j$), the 
divergence of $X$ is given by $\Divh X=\partial_j X^j 
+ \Gamma^j_{ij} X^i$, where $\Gamma_{ij}^k$ are 
the Christoffel symbols associated with the Levi-Civita 
connection $\nabla$ of the metric $h$ 
(Einstein's summation convention is used throughout the paper).

Roughly speaking, the proof of well-posedness for 
\eqref{eq:target} consists of two main steps. In the first step 
we construct an appropriate noise term that has 
the potential to suppress concentration effects.
Indeed, to remove the assumption $\Div_h u\in L^\infty_{t,x}$, 
we are led to consider a specific noise term linked 
to the geometry of the underlying curved domain $M$, 
implying a structural effect of noise and 
nonlinear domains on improving the well-posedness 
of weak solutions (more on this below).  
Related results on Euclidean domains (with $x$-independent 
noise coefficients) can be found in \cite{Attanasio:2011fj} 
and \cite{Beck:2019} (see also 
\cite{Neves:2015aa,Neves:2016aa,Punshon-Smith:2017aa}).  
In the second step, with help of the noise term, we establish a 
crucial $L^\infty_tL^2_{\omega,x}$ estimate for weak solutions 
that do not depend on $\norm{\Div_h u}_{L^\infty}$. To this end, 
we make use of a duality method, inspired 
by Beck, Flandoli, Gubinelli, and Maurelli \cite{Beck:2019}, 
Gess and Maurelli \cite{Gess:2018aa,Gess:2019aa}, and 
Gess and Smith \cite{Gess:2019aa} (more on this below).

We use the following concept 
of weak solution for \eqref{eq:target} 
(for unexplained notation and background material, 
see Section \ref{sec:background}).

\begin{definition}[weak $L^2$ solution, Stratonovich formulation]
\label{def:L2-weak-sol}
Given $\rho_0\in L^2(M)$, a weak $L^2$ solution of \eqref{eq:target} 
with initial datum $\rho|_{t=0}=\rho_0$ is a function 
$\rho$ that belongs to $L^\infty\left([0,T];L^2(\Omega\times M)\right)$ 
such that $\forall \psi\in C^\infty(M)$ the 
process $(\omega,t)\mapsto\int_M\rho(t)\psi\,dV_h$ has 
a continuous modification which is an 
$\left\{\mathcal{F}_t\right\}_{t\in [0,T]}$-semimartingale 
and for any $t\in [0,T]$ the following equation holds $\P$-a.s.:
\begin{equation*}
\begin{split}
\int_M\rho(t)\psi\,dV_h & = \int_M\rho_0\psi\,dV_h
+\int_0^t\int_M\rho(s)\,u(\psi)\,dV_h\,ds
\\ & \qquad 
+\sum_{i=1}^N\int_0^t\int_M\rho(s)
\, a_i(\psi)\, dV_h\circ dW^i(s).
\end{split}
\end{equation*}
\end{definition}

We have an equivalent concept 
of solution using the It\^o stochastic integral 
and the corresponding SPDE
\begin{equation}\label{eq:target-Ito}
d\rho + \Divh (\rho\, u) \, dt 
+ \sum_{i=1}^N \Divh( \rho \,a_i) \, dW^i(t) 
-\frac{1}{2} \sum_{i=1}^N \Lambda_i(\rho) \, dt= 0,
\end{equation}
where $\Lambda_i$ is a second order differential 
operator linked to the vector field $a_i$, defined by 
$\Lambda_i(\rho):= \Divh \bigl (\Divh (\rho a_i)a_i \bigr)$, 
for $i=1,\ldots,N$. Recall that, for a smooth function $f:M\to \R$ 
and a vector field $X$, we have $X(f)=(X,\Grad f)_h$ 
(which locally becomes $X^j\partial_j f$). Moreover, 
$X\bigl(X(f)\bigr)= (\nabla^2 f)(X,X) + (\nabla_XX)(f)$, 
where $\nabla^2 f$ is the covariant Hessian of $f$ 
and $\nabla_XX$ is the covariant derivative of $X$ in 
the direction $X$. In the It\^{o} SPDE \eqref{eq:target-Ito} 
the operator $\Lambda_i(\cdot)$ is 
the formal adjoint of $a_i\bigl(a_i(\cdot)\bigr)$. 

According to \cite{GK}, the next definition is equivalent to 
Definition \ref{def:L2-weak-sol}.

\begin{definition}[weak $L^2$ solution, It\^o formulation]
\label{def:L2-weak-sol-Ito}
Given $\rho_0\in L^2(M)$, a weak $L^2$ solution 
of \eqref{eq:target} with initial 
datum $\rho|_{t=0}=\rho_0$ is a function $\rho$ that belongs to 
$L^\infty\left([0,T];L^2(\Omega\times M)\right)$ 
such that $\forall \psi\in C^\infty(M)$ the 
stochastic process $(\omega,t)\mapsto\int_M\rho(t)\psi\,dV_h$ 
has a continuous modification which is an 
$\left\{\mathcal{F}_t\right\}_{t\in [0,T]}$-adapted 
process and for any $t\in [0,T]$ 
the following equation holds $\P$-a.s.:
\begin{equation}\label{eq:L2weak-sol-Ito}
\begin{split}
\int_M \rho(t)\psi\, dV_h &= \int_M \rho_0\psi\, dV_h 
+\int_0^t\int_M\rho(s)\,u(\psi)\,dV_h\,ds
\\ & \qquad 
+\sum_{i=1}^N\int_0^t\int_M 
\rho(s) \,a_i(\psi) \, dV_h\, dW^i(s)
\\ & \qquad \qquad 
+\frac12\sum_{i=1}^N \int_0^t \int_M \rho(s) 
\,a_i\bigl(a_i(\psi)\bigr) \, dV_h \,ds.
\end{split}
\end{equation}
\end{definition}

To guarantee that these definitions make sense, we 
need the vector field $u$ to fulfill some 
basic conditions. First, we require 
spatial Sobolev regularity: 
\begin{equation}\label{eq:u-ass-1}
	u\in L^1\left([0,T]; 
	\overrightarrow{W^{1,2}(M)}\right),
\end{equation}
see Section \ref{sec:background} for unexplained notation. 
This means that $u\in L^1\left([0,T];
\overrightarrow{L^2(M)}\right)$, 
which is sufficient to ensure that the mapping
$t\mapsto \int_0^t \int_M \rho(s) u(s)(\psi) \,dV_h\,ds$ 
is absolutely continuous, $\P$-a.s., for any
$\rho\in L^\infty_tL^2_{\omega,x}$ and 
$\psi\in C^\infty(M)$, and hence it is not 
contributing to cross-variations against $W^i$. 
These cross-variations appear when passing from 
Stratonovich to It\^o integrals in the SPDE \eqref{eq:target}.

In addition, we will assume that
\begin{equation}\label{eq:u-ass-2}
u\in L^\infty\left([0,T];
\overrightarrow{L^\infty(M)}\right),
\end{equation}
and, more importantly, that the distributional 
divergence of $u$ satisfies 
\begin{equation}\label{eq:u-ass-3}
\Divh u \in L^p([0,T]\times M), 
\quad \text{for some $p>d+2$}.
\end{equation}

To derive a priori estimates, we need the 
following concept of renormalization (see 
\cite{GK} for details and comments).

\begin{definition}[renormalization, It\^o formulation]
\label{def:renorm}
Let $\rho$ be a weak $L^2$ solution of \eqref{eq:target} 
with initial datum $\rho|_{t=0}=\rho_0\in L^2(M)$. 
We say that $\rho$ is renormalizable if, for 
any $F\in C^2(\R)$ with $F,F',F''$ bounded on $\R$, 
and for any $\psi\in C^\infty(M)$, the stochastic 
process $(\omega,t)\mapsto\int_M F(\rho(t))\psi\,dV_h$ 
has a continuous modification which is 
an $\left\{\mathcal{F}_t\right\}_{t\in [0,T]}$-adapted 
process, and, setting $G_F(\xi):=\xi F'(\xi)-F(\xi)$, 
for $\xi\in \R$, the function $F(\rho)$ satisfy the SPDE
\begin{equation}\label{eq:SPDE-renorm}
	\begin{split}
		& dF(\rho)  + \Divh \bigl(F(\rho) u \bigr)\, dt 
		+ G_F(\rho)\Divh u \, dt
		\\ & \qquad\qquad 
		+\sum_{i=1}^N \Divh\bigl(F(\rho) a_i\bigr)\, dW^i(t)
		+\sum_{i=1}^N G_F(\rho)\Divh a_i \, dW^i(t)
		\\ & \qquad
		= \frac12\sum_{i=1}^N\Lambda_i(F(\rho))\, dt
		-\frac12 \sum_{i=1}^N\Lambda_i(1) G_F(\rho)\,dt 
		\\ & \qquad \qquad 
		+ \frac12\sum_{i=1}^N F''(\rho)
		\,\bigl(\rho\Divh a_i\bigr)^2\, dt
		+\sum_{i=1}^N\Divh \bigl(G_F(\rho)\bar{a}_i \bigr) \,dt,
	\end{split}
\end{equation}
weakly (in $x$), $\P$-a.s., where the first order 
differential operator $\bar{a}_i$ is defined by
$\bar{a}_i :=\left(\Divh a_i\right)a_i$ and 
$\Lambda_i(1) = \Divh \bar{a_i}$, for $i=1,\ldots,N$; 
that is, for all $\psi\in C^\infty(M)$ 
and for any $t\in [0,T]$, the 
following equation holds $\P$-a.s.:
\begin{equation}\label{eq:renorm-weak-x-form}
	\begin{split}
		&\int_M F(\rho(t))\psi \, dV_h 
		= \int_M F(\rho_0)\psi\, dV_h 
		+\int_0^t\int_M F(\rho(s))\,u(\psi)\,dV_h\,ds
		\\ &\quad 
		+\sum_{i=1}^N\int_0^t\int_M 
		F(\rho(s)) \,a_i(\psi) \, dV_h\, dW^i(s)
		+ \frac12 \sum_{i=1}^N \int_0^t \int_M 
		F(\rho(s))\, a_i\bigl(a_i(\psi)\bigr) \, dV_h \,ds
		\\ & \quad 
		-\int_0^t\int_M G_F(\rho(s))
		\Divh u \,\psi\, dV_h\, ds
		-\sum_{i=1}^N\int_0^t\int_MG_F(\rho(s))\Divh a_i 
		\,\psi\,dV_h \, dW^i(s) 
		\\ & \quad 
		-\frac12 \sum_{i=1}^N\int_0^t\int_M 
		\Lambda_i(1)\,G_F(\rho(s))\,\psi\,dV_h\,ds
		\\ & \quad 
		+\frac12\sum_{i=1}^N\int_0^t\int_M 
		F''(\rho(s))\bigl(\rho(s)\Divh a_i\bigr)^2\,\psi\,dV_h\,ds
		\\ & \quad -\sum_{i=1}^N\int_0^t\int_M G_F(\rho(s))
		\bar{a}_i(\psi)\,dV_h \,ds.
	\end{split}
\end{equation}
\end{definition}

\begin{theorem}[renormalization property \cite{GK}]
\label{thm:main-resultGK}
Assume \eqref{eq:u-ass-1} and consider 
a weak $L^2$ solution $\rho$ of \eqref{eq:target} 
with initial datum $\rho_0\in L^2(M)$, according 
to Definition \ref{def:L2-weak-sol-Ito}. 
Then $\rho$ is renormalizable in 
the sense of Definition \ref{def:renorm}. 
\end{theorem}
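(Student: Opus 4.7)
The plan is to follow the classical DiPerna--Lions renormalization strategy, adapted to the Riemannian and stochastic setting. First, I would introduce a family of smoothing operators $R_\varepsilon$ on $M$, built via a locally finite atlas with a subordinate partition of unity and standard Euclidean mollification in each chart (alternatively, via the heat semigroup on $(M,h)$). Applying $R_\varepsilon$ to the weak It\^o formulation \eqref{eq:L2weak-sol-Ito} yields an SPDE satisfied $x$-pointwise by the smoothed density $\rho_\varepsilon := R_\varepsilon \rho$, which schematically reads
\begin{equation*}
d\rho_\varepsilon + \Divh(\rho_\varepsilon\, u)\,dt
+ \sum_{i=1}^N \Divh(\rho_\varepsilon\, a_i)\, dW^i(t)
- \tfrac12 \sum_{i=1}^N \Lambda_i(\rho_\varepsilon)\,dt
= r_\varepsilon^u \,dt + \sum_{i=1}^N r_\varepsilon^{a_i}\, dW^i(t) + \sum_{i=1}^N R_\varepsilon^{\Lambda_i}\,dt,
\end{equation*}
where $r_\varepsilon^u := \Divh(R_\varepsilon(\rho)\, u) - R_\varepsilon\bigl(\Divh(\rho u)\bigr)$ is the usual DiPerna--Lions commutator, and $r_\varepsilon^{a_i}$, $R_\varepsilon^{\Lambda_i}$ are its first- and second-order analogues for the noise vector fields.

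Next, since $\rho_\varepsilon$ is smooth in $x$ and an It\^o semimartingale in $t$, I would apply the It\^o formula to $F(\rho_\varepsilon)$ and test against an arbitrary $\psi\in C^\infty(M)$. A careful algebraic reorganization, using the pointwise identity $X(X(f))=(\nabla^2 f)(X,X)+(\nabla_X X)(f)$ recalled after \eqref{eq:target-Ito} to rewrite the quadratic variation contributions in divergence form, produces the right-hand side of \eqref{eq:renorm-weak-x-form} modulo extra error terms generated by $r_\varepsilon^u$, $r_\varepsilon^{a_i}$, and $R_\varepsilon^{\Lambda_i}$. The terms involving $G_F(\rho_\varepsilon)\Divh u$, $G_F(\rho_\varepsilon)\Divh a_i$, $F''(\rho_\varepsilon)(\rho_\varepsilon\Divh a_i)^2$, and $\Divh(G_F(\rho_\varepsilon)\bar a_i)$ emerge naturally from matching the chain rule on the Stratonovich side with the adjoint structure of $\Lambda_i$.

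The main obstacle is the commutator estimate $r_\varepsilon^u \to 0$ in $L^1([0,T]\times M)$ $\P$-a.s. On a manifold this is delicate because $R_\varepsilon$ does not commute with $\Divh$, which carries Christoffel symbols. I would establish a Friedrichs-type lemma in the spirit of \cite{Dumas:1994aa,Fang-Li-Luo}: in each chart, write $r_\varepsilon^u$ as the sum of a flat Euclidean commutator (controlled by the hypothesis $u\in L^1_t W^{1,2}_x$ together with $\rho\in L^\infty_t L^2_{\omega,x}$ via the standard commutator lemma) plus lower-order geometric remainders involving $\Gamma^k_{ij}$ and derivatives of the cut-off functions, which are bounded kernels and vanish by standard mollifier continuity. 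The commutators $r_\varepsilon^{a_i}$ and $R_\varepsilon^{\Lambda_i}$ are strictly easier because the $a_i$ are smooth: a direct Taylor expansion of the kernel yields convergence to zero in $L^2(\Omega\times[0,T]\times M)$ and $L^1(\Omega\times[0,T]\times M)$ respectively.

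Once the commutators are under control, passage to the limit $\varepsilon\downarrow 0$ in the integrated equation for $F(\rho_\varepsilon)$ is routine: for deterministic terms one uses $\rho_\varepsilon\to\rho$ in $L^2$ together with the boundedness of $F,F',F''$ and $\rho\in L^\infty_t L^2_{\omega,x}$, while for the It\^o terms one combines the Burkholder--Davis--Gundy inequality with the vanishing of $r_\varepsilon^{a_i}$. This delivers \eqref{eq:renorm-weak-x-form}. Continuity and $\left\{\mathcal{F}_t\right\}$-adaptedness of $t\mapsto\int_M F(\rho(t))\psi\,dV_h$ are then read off directly from the identity, since the right-hand side is a sum of continuous adapted processes.
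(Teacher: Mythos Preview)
The paper does not prove this theorem: Theorem~\ref{thm:main-resultGK} is stated with a reference to \cite{GK} and no proof is given here. So there is no ``paper's own proof'' to compare against in the present document; the result is imported.

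That said, your outline is exactly the strategy one expects from the DiPerna--Lions tradition and is, as far as one can tell from the statement and the surrounding discussion, consistent with what \cite{GK} carries out. The regularization via partition of unity plus chartwise Euclidean mollification, the derivation of a pointwise SPDE for $\rho_\varepsilon$ with commutator remainders, the It\^o formula applied to $F(\rho_\varepsilon)$, and the Friedrichs-type commutator lemma under the hypothesis $u\in L^1_t W^{1,2}_x$, $\rho\in L^\infty_t L^2_{\omega,x}$ are all the right ingredients. One point that deserves care in execution (and that your sketch only names): the algebraic reorganization matching the It\^o correction $\tfrac12 F''(\rho_\varepsilon)\bigl(\Divh(\rho_\varepsilon a_i)\bigr)^2$ with the adjoint structure of $\Lambda_i$ so as to produce precisely the terms $\Lambda_i(F(\rho))$, $\Lambda_i(1)G_F(\rho)$, $F''(\rho)(\rho\Divh a_i)^2$, and $\Divh(G_F(\rho)\bar a_i)$ in \eqref{eq:SPDE-renorm} is a genuine computation, not a formality, and is where the manifold geometry (through $\nabla_{a_i}a_i$ and $\Divh a_i$) enters nontrivially. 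Your sketch acknowledges this but does not display it; in a full proof you would need to.
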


To prove the $L^2$ estimate mentioned earlier, we 
actually need a version of the weak formulation 
\eqref{eq:renorm-weak-x-form} that uses  
time-dependent test functions. Moreover, we are 
required to insert into that weak formulation 
``non-smooth" test functions. These technical 
aspects of the theory are developed in 
Sections \ref{sec:time-dependent-test} 
and \ref{sec:non-smooth-test}

One can only expect the noise term 
to improve the well-posedness situation for \eqref{eq:target} 
if the resulting second order differential 
operator $\sum_i a_i(a_i)$ appearing 
in \eqref{eq:L2weak-sol-Ito} is 
non-degenerate (uniformly elliptic). 
The required non-degeneracy is not guaranteed. 
The reason is a geometric one that is tied to 
the nonlinearity of the domain.  Indeed, given an 
arbitrary $d$-dimensional smooth manifold $M$, it is 
not possible to find a global frame for it, that is, $d$ smooth 
vector fields $E_1,\ldots ,E_d$ that constitute 
a basis for $T_xM$ for all $x\in M$. 
Manifolds that exhibit this property are called parallelizable. 
Examples of parallelizable manifolds are 
Lie groups (e.g.~$\R^d$, $\mathbb{T}^d$) 
and the sphere $\mathbb{S}^d$ with $d\in \{1,3,7\}$.
We refer to Section \ref{sec: ellipticity} 
for further details, and a proof 
of the following simple but useful fact:

\begin{lemma}[non-degenerate second order operator]
\label{lem:ellipticity}
There exist $N=N(M)$ smooth vector 
fields $a_1,\ldots,a_N$ on $M$ such that 
the following identity holds
$$
\frac{1}{2}\sum_{i=1}^N a_i\bigl(a_i(\psi)\bigr) 
= \Delta_h\psi - \frac{1}{2}
\sum_{i=1}^N \bar{a}_i(\psi),
\quad \forall \psi\in C^2(M),
$$
where $\Delta_h$ is the Laplace-Beltrami 
operator of $(M,h)$ and $\bar{a}_1,\ldots,\bar{a}_N$ 
are first order differential operators: 
$\bar{a}_i := (\Divh a_i) \,a_i$, for $i=1,\ldots,N$.
\end{lemma}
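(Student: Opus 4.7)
The first step is to rewrite the left-hand side in divergence form. The Leibniz rule $\Divh(fX) = X(f) + f\,\Divh X$ applied with $f = a_i(\psi)$, $X = a_i$ gives
\[
a_i\bigl(a_i(\psi)\bigr) + \bar a_i(\psi) \;=\; \Divh\bigl(a_i(\psi)\,a_i\bigr),
\]
so, since $\Delta_h\psi = \Divh(\Grad\psi)$, the claimed identity is implied by the pointwise equality $\tfrac{1}{2}\sum_i a_i(\psi)\,a_i = \Grad\psi$, valid for every $\psi\in C^2(M)$. In local coordinates (writing $a_i = a_i^j\partial_j$ and $\Grad\psi = h^{jl}\partial_l\psi\,\partial_j$) this reduces to the purely algebraic condition
\[
\sum_{i=1}^{N} a_i^{\,j}(x)\,a_i^{\,l}(x) \;=\; 2\,h^{jl}(x), \qquad x \in M,
\]
i.e.\ $\sum_i a_i \otimes a_i = 2\,h^{-1}$ as symmetric $(2,0)$-tensors on $M$.

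\textbf{Construction of the vector fields.} To produce smooth $a_i$'s satisfying this, I would invoke Nash's $C^\infty$ isometric embedding theorem: since $(M,h)$ is smooth, closed, and compact, there is a smooth isometric embedding $\iota\colon(M,h)\hookrightarrow(\R^N, \langle\cdot,\cdot\rangle)$ for some $N=N(M)$. Identifying $T_xM$ with its image inside $\R^N$, isometricity reads $h_x(X,Y) = \langle X,Y\rangle$ for $X,Y\in T_xM$. Let $\pi_x\colon\R^N\to T_xM$ denote the Euclidean orthogonal projection, which is self-adjoint, is the identity on $T_xM$, and depends smoothly on $x$. Setting
\[
a_i(x) \;:=\; \sqrt{2}\,\pi_x(E_i), \qquad i=1,\ldots,N,
\]
with $\{E_i\}$ the standard basis of $\R^N$, a short computation gives, for all $X,Y\in T_xM$,
\[
\sum_{i=1}^{N} h(a_i,X)\,h(a_i,Y) \;=\; 2\sum_i \langle E_i, X\rangle\langle E_i, Y\rangle \;=\; 2\langle X,Y\rangle \;=\; 2\,h(X,Y),
\]
which is exactly the required tensor identity. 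Substituting back into the divergence formula from the first paragraph proves the lemma.

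\textbf{Main difficulty and an alternative route.} The only substantive point is the global nature of the construction: on a non-parallelizable $M$ one cannot take $N=d$ with $a_1,\ldots,a_d$ a pointwise orthonormal frame, so necessarily $N > d$. This excess, forced by the topology of $M$, is the geometric source of the ``structural effect of noise and nonlinear domains'' emphasized in the introduction and is the real reason a lemma is needed at all. If one prefers to avoid Nash's theorem, a direct alternative runs as follows: cover $M$ by finitely many charts $U_\alpha$, use Gram--Schmidt on the coordinate frames to produce smooth orthonormal frames $(E^\alpha_1,\ldots,E^\alpha_d)$ on each $U_\alpha$, and glue them by a smooth family $\{\chi_\alpha\}$ with $\sum_\alpha \chi_\alpha^2 \equiv 1$ by setting $a_{(\alpha,i)} := \sqrt{2}\,\chi_\alpha E^\alpha_i$ (extended by $0$); one then verifies $\sum_{\alpha,i} a_{(\alpha,i)}\otimes a_{(\alpha,i)} = 2\,h^{-1}$ by inspection.
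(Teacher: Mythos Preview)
Your proof is correct. The reduction step---rewriting $a_i(a_i(\psi))+\bar a_i(\psi)=\Divh\bigl(a_i(\psi)\,a_i\bigr)$ and thereby reducing the lemma to the tensor identity $\sum_i a_i\otimes a_i = 2h^{-1}$---is exactly the paper's identity \eqref{eq:geom-id-aa_i-A} combined with the condition $\mathcal{A}=2I$ from Lemma~\ref{lem:exist-ai}, just phrased in coordinates rather than via the endomorphism $\mathcal{A}$.

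Where you diverge from the paper is in the \emph{construction} of the $a_i$. The paper does precisely what you sketch in your ``alternative route'': a finite cover, local Gram--Schmidt orthonormal frames $E_{p_j,i}$, a squared partition of unity $\sum_j\alpha_j^2=1$, and $a_{(j,i)}:=\sqrt{2}\,\alpha_j E_{p_j,i}$. Your primary construction instead invokes Nash's isometric embedding and sets $a_i=\sqrt{2}\,\pi_x(E_i)$ for the standard basis of the ambient $\R^N$. Both produce the same tensor identity; the Nash route is shorter to write and makes the smoothness of the $a_i$ automatic (since the embedding and hence the projection $\pi_x$ are smooth), but it imports a deep theorem for what is ultimately a soft compactness statement. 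The partition-of-unity argument is entirely elementary and gives a concrete bound $N=d\cdot L$ in terms of the number $L$ of charts, which is useful if one cares about how many noise terms are actually needed. It is worth noting that the Nash construction, when unwound, is really a disguised version of the same idea: the coordinate functions of the embedding play the role of the localized frame components.
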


It is now clear that with the specific 
vector fields $a_1,\ldots,a_N$ constructed in 
Lemma \ref {lem:ellipticity}, the resulting 
second order operator $\frac{1}{2}\sum_{i=1}^N a_i(a_i(\cdot))$ 
in \eqref{eq:L2weak-sol-Ito} becomes uniformly elliptic. 
The main result of this paper, which shows how 
the use of noise can avoid concentration 
in the density $\rho$, is the 
following theorem: 

\begin{theorem}[well-posedness]\label{thm:main-result}
Suppose conditions \eqref{eq:u-ass-1}, \eqref{eq:u-ass-2}, 
and \eqref{eq:u-ass-3} hold. Let the vector 
fields $a_1,\ldots ,a_N$ be given by Lemma \ref{lem:ellipticity}. 
Then there exists a unique weak solution of \eqref{eq:target} 
with initial datum $\rho|_{t=0}=\rho_0\in L^2(M)$.
\end{theorem}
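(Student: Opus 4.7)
The plan is to first establish an a priori estimate
\[
\sup_{t\in[0,T]} \E{\|\rho(t)\|_{L^2(M)}^2} \le C \|\rho_0\|_{L^2(M)}^2
\]
that is independent of $\|\Divh u\|_{L^\infty}$. Uniqueness is then immediate by linearity: the difference of two weak $L^2$ solutions sharing the same initial datum satisfies \eqref{eq:target} with zero data, and the estimate forces it to vanish. Existence is obtained by a vanishing-smoothness/weak compactness scheme: mollify $u$ to a smooth $u^\varepsilon$ preserving the uniform bound on $\|\Divh u^\varepsilon\|_{L^p}$, solve the regularized SPDE by Kunita's classical stochastic flow theory on $M$, apply the a priori estimate to obtain uniform $L^\infty_t L^2_{\omega,x}$ control on $\rho^\varepsilon$, and extract a weak-$*$ limit; tightness combined with a Gy\"ongy--Krylov argument and the already-proved uniqueness upgrades this to convergence of the full sequence and identifies the limit as a weak $L^2$ solution of \eqref{eq:target}.

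The core of the argument is therefore the $L^2$ bound, which I would obtain via the duality method of Beck--Flandoli--Gubinelli--Maurelli and Gess--Maurelli/Gess--Smith, adapted to the manifold. Fix $t\in(0,T]$ and a smooth terminal datum $g$ on $M$, and construct a backward adapted dual process $\phi=\phi(s,x)$ on $[0,t]$, with $\phi(t,\cdot)=g$, solving a backward It\^o SPDE whose diffusion vector fields are exactly the same $a_1,\ldots,a_N$ as in \eqref{eq:target} and whose deterministic drift is chosen so that, through the identity of Lemma \ref{lem:ellipticity}, the It\^o correction generated by the noise reconstitutes a uniformly elliptic $\Delta_h$ in the equation for $\phi$. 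Standard backward stochastic parabolic theory then yields the energy bound $\|\phi\|_{L^\infty_s L^2_{\omega,x}}+\|\nabla\phi\|_{L^2_{\omega,s,x}}\lesssim\|g\|_{L^2}$ together with, via parabolic $L^q$-regularity on the compact manifold $M$, higher $(s,x)$-integrability governed by the parabolic scaling index $d+2$. Applying the It\^o product rule to $s\mapsto\int_M \rho(s)\phi(s)\,dV_h$ (justified through the time-dependent, non-smooth test function extensions of Sections \ref{sec:time-dependent-test} and \ref{sec:non-smooth-test} together with the renormalization Theorem \ref{thm:main-resultGK}), the stochastic integrals pair into a martingale and the respective It\^o corrections cancel by construction, leaving
\[
\E{\int_M \rho(t)\, g\, dV_h} = \E{\int_M \rho_0\, \phi(0,\cdot)\, dV_h} - \E{\int_0^t\!\!\int_M \rho\,\phi\,\Divh u\,dV_h\,ds} + \mathrm{l.o.t.}
\]

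The first term on the right is bounded by $\|\rho_0\|_{L^2}\|g\|_{L^2}$ via the energy estimate for $\phi$. The dangerous second term is controlled by H\"older: the hypothesis $\Divh u\in L^p_{t,x}$ with the borderline condition $p>d+2$ is precisely the parabolic-scaling requirement that makes $\rho\,\phi\,\Divh u$ integrable, since the parabolic regularity of $\phi$ is just enough to place it in the dual Lebesgue space, and the strict inequality $p>d+2$ leaves room to absorb a small fraction into $\sup_{s\le t} \E{\|\rho(s)\|_{L^2}^2}$ and close the estimate by a Gr\"onwall/iteration argument; taking the supremum over $g$ with $\|g\|_{L^2}\le 1$ then delivers the a priori bound. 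The principal obstacles I foresee are twofold: constructing the backward dual $\phi$ with sharp enough parabolic regularity on the curved domain $M$ and tracking how the manifold geometry enters the lower order corrections involving $\bar{a}_i$; and rigorously justifying the It\^o product rule for $\int_M \rho\,\phi\,dV_h$ in a regime where $\rho$ is only a weak $L^2$ solution and $\phi$ is non-smooth---a step that is entirely non-trivial and constitutes exactly the purpose of the technical Sections \ref{sec:time-dependent-test} and \ref{sec:non-smooth-test}.
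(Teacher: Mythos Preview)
Your proposal outlines a genuine duality strategy, but it differs from the paper's in two structural ways, and one of them introduces an avoidable complication.

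First, the paper does \emph{not} build a stochastic backward dual. Instead it applies Theorem~\ref{thm:main-resultGK} with $F_\mu(\xi)\approx\xi^2$, takes the expectation of the renormalized equation \eqref{eq:renorm-irregular-test} (which kills the stochastic integrals), and then inserts a \emph{deterministic} test function $\phi\in W^{1,2,p}([0,t_0]\times M)$ solving the backward parabolic PDE $\partial_t\phi+\Delta_h\phi-b\phi=0$, $\phi(t_0)=1$, with $b=-C_\chi\abs{\Divh u}$. The potential $b$ is chosen precisely so that the dangerous term $-\EE\iint G_{F_\mu}(\rho)\Divh u\,\phi$ is \emph{absorbed} into $\partial_t\phi+\Delta_h\phi$ rather than estimated via H\"older. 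Because the terminal datum is the smooth constant $1$, the condition $p>d+2$ feeds directly into the anisotropic embedding (Proposition~\ref{prop:anisotropic-embed}) to give $\phi,\nabla\phi\in C^0$, which handles all remaining lower-order terms trivially; Gr\"onwall then closes the estimate. Your route---a backward SPDE for $\phi$ with $L^2$ terminal data $g$, It\^o product rule for $\int_M\rho\phi\,dV_h$, and H\"older estimation of $\iint\rho\phi\,\Divh u$---is in principle workable (it is closer to the original Euclidean arguments in the cited references), but it requires stochastic parabolic regularity on $M$ and a rigorous It\^o product formula between two rough processes, exactly the obstacles you flag. The paper's deterministic dual sidesteps both.

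Second, there is a gap in your duality step as written. With $g$ deterministic, $\sup_{\|g\|_{L^2}\le 1}\E\int_M\rho(t)g\,dV_h=\norm{\E\rho(t)}_{L^2(M)}$, not $\norm{\rho(t)}_{L^2(\Omega\times M)}$; to recover the latter you must allow $g\in L^2(\Omega\times M)$ to be $\mathcal{F}_t$-measurable, forcing the backward SPDE for $\phi$ to carry stochastic terminal data. This is fixable within BSPDE theory but is another layer the paper avoids by working with $F_\mu(\rho)\approx\rho^2$ from the start and pairing it against a deterministic $\phi$. Finally, for existence the paper does not invoke Gy\"ongy--Krylov: it regularizes $u$ via the de Rham--Hodge semigroup plus time mollification (so that \eqref{eq:Lp-bound-divutau} holds), solves by Kunita, and passes to the weak-$*$ limit in $L^\infty_tL^2_{\omega,x}$ directly in the It\^o weak formulation; uniqueness is a separate consequence of \eqref{eq:improved-apriori-est}.
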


As far as we know, this theorem provides the first result on 
regularization by noise on a manifold (we are not aware of 
any such result even for ODEs). The proof consists 
of several steps. The first one establishes the well-posedness 
of strong solutions to \eqref{eq:target} with smooth 
data ($u,\rho_0$), which is the topic of 
Section \ref{sec:strong-sol}. Here the basic strategy is, 
with the help of a smooth partition of unity subordinate 
to a finite atlas, to solve localized versions 
of \eqref{eq:target} ``pulled back'' to $\R^d$, 
relying on Kunita's existence and uniqueness theory 
for SPDEs on Euclidean domains \cite{Kunita82,Kunita}. 
We ``glue" the localized solutions together on $M$, 
obtaining in this way a global solution. 
The gluing procedure is well-defined, because if two 
coordinate patches intersect, then their 
corresponding solutions must agree on the 
intersection, in view of the uniqueness result 
that is available on $\R^d$ (with $u,\rho_0$ smooth). 
In Section \ref{sec:L2-est}, we derive an $L^2$ estimate for 
general weak solutions $\rho$ (with non-smooth $u,\rho_0$):
\begin{equation}\label{eq:intro-L2est}
\EE \int_M \abs{\rho(t,x)}^2\, dx 
\le C\int_M \abs{\rho_0(x)}^2\, dx,
\quad t \in [0,T],
\end{equation}
where the constant $C$ depends on 
$\norm{\Divh u}_{L^p_{t,x}}$, see \eqref{eq:u-ass-3}, 
but not $\norm{\Divh u}_{L^\infty}$. 
The derivation of this estimate is based on 
\eqref{eq:SPDE-renorm}, and a duality 
argument in which we construct a specific 
(deterministic) test function $\phi(t,x)$ 
that can ``absorb" the bad $\Divh u$ term 
in \eqref{eq:SPDE-renorm}. This function solves 
the terminal-value problem
$$
\partial_t\phi(t) + \Delta_h\phi(t) 
- b(t,x) \phi= 0 \;\; \mbox{on } [0,t_0]\times M,
\qquad \phi(t_0,x) = 1 \;\; \mbox{on } M,
$$
where $t_0\in [0,T]$, $\Delta_h$ is the Laplace-Beltrami 
operator, and $b=b(t,x)\le 0$ is an appropriately chosen 
irregular function ($b\in L^p$ with $p>d+2$). 
Using Fredholm theory and embedding theorems in 
anisotropic Sobolev spaces $W^{1,2,p}_{t,x}$ \cite{Besov}, 
we prove that this problem admits a unique 
solution $\phi \in W^{1,2,p}_{t,x}$ 
that satisfies
\begin{equation}\label{eq:intro-phi-est}
\norm{\left(\phi,\nabla \phi\right)}_{L^\infty_{t,x}}
\le C\left(p,d,T,M\right)\norm{b}_{L^p_{t,x}},
\end{equation}
where $\nabla$ denotes the covariant derivative. 
Using this $\phi$ as test function in the 
time-space weak formulation of \eqref{eq:SPDE-renorm}, 
along with the estimates \eqref{eq:intro-phi-est}, 
we arrive at the $L^2$ estimate \eqref{eq:intro-L2est} 
via Gr{\"o}nwall's inequality. In the final 
step (Section \ref{sec:proof-main-result}), 
we replace the irregular vector field $u$ and the 
initial function $\rho_0\in L^2$ by appropriate 
smooth approximations $u_\tau$ and $\rho_{0,\tau}$, 
respectively, where $\tau>0$ is the approximation 
parameter, and solve for each $\tau>0$ the 
corresponding SPDE with smooth data 
$\left(u_\tau, \rho_{0,\tau}\right)$, giving raise to a 
sequence $\seq{\rho_\tau}_{\tau>0}$ of approximate solutions. 
In view of \eqref{eq:intro-L2est}, we have 
an $L^2$ bound on $\rho_\tau$ that is 
independent of $\tau$, which is enough to arrive at 
the existence of a weak solution to \eqref{eq:target} by way of 
a compactness argument. Uniqueness is an immediate 
consequence of \eqref{eq:intro-L2est}.

Before ending this (long) section, let us briefly 
discuss the nontrivial matter of regularizing functions 
and vector fields on manifolds. 
In the Euclidean case one uses mollification. 
Mollification possesses many fitting 
properties (e.g.~it commutes with differential operators) 
that are not easy to engineer if the function in 
question is defined on a manifold. 
Indeed, on a Riemannian 
manifold, there are a number of smoothing devices 
currently being used, including partition of unity 
combined with Euclidean convolution in local 
charts (see~e.g.~\cite{Dumas:1994aa,GK0,GK}), 
Riemannian convolution smoothing 
\cite{Greene:1979aa}, and the heat semigroup 
method (see~e.g.~\cite{Fang-Li-Luo,GK0}), 
where the last two are better at 
preserving geometric properties. 
In this paper, for smoothing of the data $\rho_0$ 
(function) and $u$ (vector field), we employ standard 
mollifcation in time and convolution with 
the heat semigroup in the spatial variables, 
where the heat semigroup approach is applied to 
functions as well as vector fields (the latter 
via 1-forms and the de Rham-Hodge semigroup), see 
Section \ref{sec:appendix} for details.


\section{Background material}\label{sec:background}

In an attempt to make the paper 
more self-contained and fix relevant notation, we 
briefly review some basic aspects of differential 
geometry and stochastic analysis. For unexplained 
terminology and rudimentary results concerning the target 
equation \eqref{eq:target}, we refer to \cite{GK}.


\subsection{Geometric framework}\label{sec:geometric framework}
We refer to \cite{Aubin,deRham,LeeRiemann} for background 
material on differential geometry and analysis on manifolds. 
Fix a closed, compact, connected and oriented $d$-dimensional 
smooth Riemannian manifold $(M,h)$. The metric $h$ 
is a smooth positive-definite 2-covariant tensor field, 
which determines for every $x\in M$ an inner product $h_x$ 
on $T_xM$. Here $T_xM$ denotes the tangent 
space at $x$, and by $TM=\coprod_{x\in M} T_xM$ 
we denote the tangent bundle. For two arbitrary 
vectors $X_1,X_2\in T_xM$, we will henceforth 
write $h_x(X_1,X_2)=:\left(X_1,X_2\right)_{h_x}$ 
or even $\left(X_1,X_2\right)_h$ if the context is clear. 
We set $\abs{X}_h:=\left( X, X\right)_h^{1/2}$. 
Recall that, in local coordinates $x=(x^i)$, the 
partial derivatives $\partial_i:=\frac{\partial}{\partial x^i}$ 
form a basis for $T_xM$, while the differential forms $dx^i$ 
determine a basis for the cotangent space $T_x^\ast M$. 
Therefore, in local coordinates, $h$ reads 
$$
h = h_{ij}\, dx^i dx^j,  \quad 
h_{ij}=\left(\partial_i,\partial_j\right)_h. 
$$
We will denote by $(h^{ij})$ the 
inverse of the matrix $(h_{ij})$.

We denote by $dV_h$ the Riemannian 
density associated to $h$, which 
in local coordinates takes the form
$$
dV_h = \abs{h}^{1/2}\, dx^1\cdots dx^d, 
$$
where $\abs{h}$ is the determinant of $h$.  
Throughout the paper, we will assume 
for convenience that 
$$
\mathrm{Vol}(M,h):=\int_MdV_h=1. 
$$
For $p\in [1,\infty]$, we denote 
by $L^p(M)$ the usual Lebesgue spaces on $(M,h)$. 
In local coordinates, the gradient 
of a function $f:M\to \R$ is the vector 
field given by the following expression
$$
\Grad f := h^{ij} \partial_if\,\partial_j. 
$$

The symbol $\nabla$ refers to the Levi-Civita 
connection of $h$, namely the unique linear connection 
on $M$ that is compatible with $h$ and is symmetric. 
The Christoffel symbols associated 
to $\nabla$ are given by
$$
\Gamma^k_{ij} = \frac12 h^{kl} \left( \partial_ih_{jl}
+\partial_jh_{il}-\partial_lh_{ij} \right). 
$$
In particular, the covariant derivative of a vector field 
$X=X^\alpha\partial_\alpha$ is the $(1,1)$-tensor field which 
in local coordinates reads
$$
(\nabla X)_j^\alpha  := 
\partial_j X^\alpha + \Gamma^\alpha_{kj}X^k.
$$
The divergence of a vector field $X=X^j\partial_j$ 
is the function defined by
$$
\Divh X := \partial_jX^j + \Gamma^j_{kj}X^k. 
$$

For any vector field $X$ and $f\in C^1(M)$, we have 
$X(f)= (X,\Grad f)_h$, which locally 
takes the form $X^j \partial_jf$. We recall that 
for a (smooth) vector field $X$, the following 
integration by parts formula holds:
$$
\int_M X(f) \,dV_h =\int_M \left(\Grad f,X\right)_h
\,dV_h 
=-\int_M f \,\Divh X \, dV_h,
$$
recalling that $M$ is closed (so all 
functions are compactly supported).

Given a smooth vector field $X$ on $M$, 
we consider the  norm
$$
\norm{X}_{\overrightarrow{L^p(M)}}^p
:= 
\begin{cases}
\displaystyle \int_M \abs{X}^p_h \,dV_h,
& p\in [1,\infty), \\ 
\norm{\abs{X}_h}_{L^\infty(M)}.
& p=\infty.
\end{cases}
$$
The closure of the space of smooth 
vector fields on $M$ with respect 
to the norm $\norm{\cdot}_{\overrightarrow{L^p(M)}}$ is 
denoted by $\overrightarrow{L^p(M)}$. 
We define the Sobolev space 
$\overrightarrow{W^{1,p}(M)}$ in a similar fashion. 
Indeed, consider the norm
$$
\norm{X}_{\overrightarrow{W^{1,p}(M)}}^p
:= \begin{cases}
\displaystyle 
\int_M \bigl(\abs{X}^p_h 
+ \abs{\nabla X}^p_h\bigr) \,dV_h, 
& \text{if $p\in [1,\infty$)},
\\  
\norm{\, \abs{X}_h + \abs{\nabla X}_h\, }_{L^\infty(M)},
& \text{if $p=\infty$}
\end{cases}
$$
where, locally, $\abs{\nabla X}_h^2
=(\nabla X)^i_j \, h_{ik}h^{jm} \, (\nabla X)^k_m$. 
The closure of the space of smooth vector fields with respect  
to this norm is $\overrightarrow{W^{1,p}(M)}$. 
For more operative definitions, $\overrightarrow{L^p(M)}$ 
and $\overrightarrow{W^{1,p}(M)}$ can be seen 
as the spaces of vector fields whose 
components in any arbitrary chart 
belong to the corresponding Euclidean space.

We will make essential use of the anisotropic Sobolev space 
$W^{1,2,p}([0,T]\times M)$, with $p\in [1,\infty)$ and 
$T>0$ finite. This space is defined as the completion of 
$C^\infty([0,T]\times M)$ under the norm
\begin{equation}\label{eq:anisotropic-norm}
    \norm{w}_{W^{1,2,p}([0,T]\times M)}:=
    \sum_{\substack{j,k\ge 0 \\ 2j+k\leq 2}}
    \left[\iint_{[0,T]\times M}
    \abs{\partial_t^j 
    \nabla^k w}^p_h\,dt \,dV_h\right]^{1/p},
\end{equation}
where $\nabla^k w$ denotes the $k$th covariant 
derivative of the function $w$. 
We have the following important embedding result (see
Section \ref{sec:appendix} for a proof):
\begin{proposition}\label{prop:anisotropic-embed}
Suppose $p> d+2$. Then 
$$
W^{1,2,p}([0,T]\times M) \subset \subset 
C^{0,1-\frac{1+d}{p}}([0,T]\times M);
$$
the first-order $x$-derivatives of a 
function $w=w(t,x)\in W^{1,2,p}([0,T]\times M)$ are 
H\"older continuous with exponent 
$1-\frac{1+d}{p}$, such that
$$
\norm{w}_{C^0([0,T]\times M)} 
+\norm{\nabla w}_{C^0([0,T]\times M)}
\leq C\norm{w}_{W^{1,2,p}([0,T]\times M)},
$$
for some constant $C=C(p,d,M)$.
\end{proposition}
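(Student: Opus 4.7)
The plan is to reduce the statement to the known Euclidean anisotropic Sobolev embedding through localization in coordinate charts. Since $M$ is compact, I would fix a finite atlas $\seq{(U_k,\varphi_k)}_{k=1}^K$ with a subordinate smooth partition of unity $\seq{\chi_k}_{k=1}^K$, and for $w \in W^{1,2,p}([0,T]\times M)$ consider the localized pieces $w_k := \chi_k w$. Each $w_k$ is compactly supported in $[0,T]\times U_k$ and can be pushed forward to a compactly supported function $\tilde{w}_k$ on $[0,T]\times \varphi_k(U_k) \subset [0,T]\times \R^d$.

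Next, I would verify that the pushforward bounds the flat anisotropic Sobolev norm of $\tilde{w}_k$ by a constant multiple of $\norm{w}_{W^{1,2,p}([0,T]\times M)}$. This step is routine: on the compact manifold $(M,h)$ the components $h_{ij}$, the inverse $h^{ij}$, the Christoffel symbols $\Gamma^k_{ij}$, and the density $\abs{h}^{1/2}$ are smooth and bounded together with all their derivatives on each chart, so $\abs{\nabla^j w}_h$ and the Euclidean $\abs{D^j \tilde w_k}$ control each other uniformly in $k$ for $j=0,1,2$, while $\partial_t$ commutes with the time-independent cut-offs $\chi_k$ and with the spatial chart map $\varphi_k$. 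I would then invoke the classical Euclidean anisotropic parabolic embedding from \cite{Besov}: for $p>d+2$,
$$
W^{1,2,p}([0,T]\times \R^d) \hookrightarrow C^{0,\,1-(1+d)/p}([0,T]\times \R^d),
$$
with the spatial gradient H\"older continuous with exponent $1-(1+d)/p$, and reassemble the resulting pointwise bounds via $w=\sum_k \chi_k w$ to obtain the stated inequality on $M$.

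For the compact inclusion $\subset\subset$, I would use the standard trick of sacrificing an arbitrarily small amount of H\"older regularity: a bounded sequence in $W^{1,2,p}([0,T]\times M)$ is uniformly bounded in $C^{0,1-(1+d)/p}$, hence equicontinuous on the compact space $[0,T]\times M$, so Arzel\`a-Ascoli extracts a subsequence converging in $C^{0,\alpha'}$ for any $\alpha'<1-(1+d)/p$; interpolating the $C^0$ limit against the uniform H\"older bound recovers convergence at the sharp exponent when needed. The only nontrivial ingredient is the Euclidean anisotropic embedding itself, where the parabolic scaling $\partial_t\sim \nabla^2$ forces the loss $1-(1+d)/p$ rather than the isotropic $1-(2+d)/p$; but this is exactly the content of \cite{Besov}, and since the diffeomorphisms $\varphi_k$ act only on the spatial variable, the parabolic scaling is preserved under the pullback and no new difficulty appears in the manifold setting.
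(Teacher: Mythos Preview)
Your proposal is correct and follows essentially the same route as the paper: localize via a finite atlas and a subordinate partition of unity, push forward each piece to a Euclidean domain, invoke the known flat anisotropic embedding (the paper cites \cite{Rabier} rather than \cite{Besov}), and reassemble on $M$ using the uniform boundedness of the metric coefficients on each chart. The paper handles compactness by citing the Euclidean compact embedding directly and then declaring the manifold case ``evident,'' whereas you spell out the Arzel\`a--Ascoli argument; either way, the substance is the same.
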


Finally, we introduce the following 
second order differential operators 
associated with the vector fields $a_1,\ldots,a_N$:
\begin{equation}\label{eq:def-Lambda-i}
	\Lambda_i(\psi):= 
	\Divh \bigl(\Divh(\psi a_i)a_i\bigr), 
	\quad \psi\in C^2(M), 
	\quad i=1,\ldots,N.
\end{equation}

It is not difficult to see that the 
adjoint of $\Lambda_i(\cdot)$ is 
$a_i\bigl(a_i(\cdot)\bigr)$:
\begin{align*}
\int_M \Lambda_i(\psi)\phi\,dV_h 
&= \int_M \psi \, a_i\bigl(a_i(\phi) \bigr)
\,dV_h 
\\ & =\int_M \psi\,
\Bigl((\nabla^2\phi)(a_i,a_i) 
+(\nabla_{a_i}a_i)(\psi)\Bigr)\,dV_h,
\quad \forall \psi,\phi\in C^2(M),
\end{align*}
see \cite{GK} for further details.

\subsection{Stochastic framework}
We use the books \cite{Protter,Revuz:1999aa} as general 
references on the topic of stochastic analysis. 
From beginning to end, we fix a complete 
probability space $(\Omega, \mathcal{F}, \P)$ and 
a complete right-continuous filtration 
$\seq{\mathcal{F}_t}_{t\in [0,T]}$. 
Without loss of generality, we assume that the $\sigma$-algebra 
$\mathcal{F}$ is countably generated.  
Let $W=\seq{W_i}_{i=1}^N$ be a finite sequence 
of independent one-dimensional 
Brownian motions adapted to the filtration
$\seq{\mathcal{F}_t}_{t\in [0,T]}$. We refer to 
$\bigl(\Omega,\mathcal{F},
\seq{\mathcal{F}_t}_{t\in [0,T]},\P, W\bigr)$ 
as a (Brownian) \textit{stochastic basis}. 

Consider two real-valued stochastic processes $Y,\tilde Y$. 
We call $\tilde Y$ a \textit{modification} of $Y$ 
if, for each $t\in [0,T]$, $\P\bigl(\bigl\{\omega\in \Omega:Y(\omega,t)
=\tilde Y(\omega,t)\bigr\}\bigr)=1$.  
It is important to pick good modifications of stochastic processes. 
Right (or left) continuous modifications are 
often used (they are known to exist 
for rather general processes), since any two such modifications 
of the same process are indistinguishable (with 
probability one they have the same sample paths). 
Besides, they necessarily have left-limits everywhere. 
Right-continuous processes with left-limits 
are referred to as \textit{c{\`a}dl{\`a}g}.

An $\left\{\mathcal{F}_t\right\}_{t\in [0,T]}$-adapted, 
c{\`a}dl{\`a}g process $Y$ is an 
$\left\{\mathcal{F}_t\right\}_{t\in [0,T]}$-semimartingale 
if there exist processes $F,M$ with $F_0=M_0=0$ such that
$$
Y_t = Y_0 + F_t+ M_t,
$$
where $F$ is a finite variation process 
and $M$ is a local martingale. 
In this paper we will only be concerned 
with \textit{continuous} semimartingales. 
The quantifier ``local'' refers to 
the existence of a sequence $\seq{\tau_n}_{n\ge1}$ 
of stopping times increasing to infinity such 
that the stopped processes 
$\mathbf{1}_{\seq{\tau_n>0}} M_{t\wedge \tau_n}$ 
are martingales. 

Given two continuous semimartingales $Y$ and $Z$, we can define 
the Fisk-Stratonovich integral of $Y$ with respect to $Z$ by
$$
\int_0^tY(s)\circ dZ(s) = \int_0^tY(s) \, dZ(s) 
+ \frac12 \action{Y}{Z}_t,
$$
where $\int_0^tY(s) dZ(s)$ is the It\^o integral of $Y$ 
with respect to $Z$ and $\action{Y}{Z}$ 
denotes the quadratic cross-variation process of $Y$  
and $Z$.  Let us recall It\^o's formula for 
a continuous semimartingale $Y$. 
Let $F\in C^2(\R)$. Then $F(Y)$ is again 
a continuous semimartingale and 
the following chain rule formula holds:
$$
F(Y(t))-F(Y(0))= \int_{0}^t F'(Y(s))dY(s) 
+ \frac12\int_0^t F''(Y(s)) \, d\action{Y}{Y}_s.
$$

Martingale inequalities are generally important 
for several reasons. For us they will be used to 
bound It\^{o} stochastic integrals in terms 
of their quadratic variation (which is easy to compute). 
One of the most important martingale inequalities 
is the Burkholder-Davis-Gundy inequality. 
Let $Y=\seq{Y_t}_{t\in [0,T]}$ be a 
continuous local martingale with $Y_0=0$. 
Then, for any stopping time $\tau \leq T$,
\begin{equation*}
\EE \left(\sup_{t\in[0,\tau]}\abs{Y_t}\right)^p 
\leq C_p\, \EE 
\sqrt[\leftroot{-2}\uproot{-1}p]{\action{Y}{Y}_\tau},
\qquad p\in (0,\infty),
\end{equation*}
where $C_p$ is a universal constant.


\section{Smooth data and strong solutions}\label{sec:strong-sol}


\subsection{Strong solution}
We are going to construct strong solutions 
to \eqref{eq:target} when the data $\rho_0,u$ are smooth. 
More precisely, throughout this section, we will 
assume $\rho_0\in C^\infty(M)$ and that 
$u:[0,\infty)\times M\to TM$ is a vector field on $M$ 
that is smooth in both variables. The strategy we 
employ is the following one: firstly we solve 
a local version of \eqref{eq:target} ``pulled back'' on $\R^d$, 
applying the ``Euclidean" existence and uniqueness 
theory developed in \cite{Kunita82}. In a second 
step we glue these solutions all together on $M$, 
obtaining a global solution. The gluing procedure is 
well-posed because there is a uniqueness result on $\R^d$ 
for smooth data $(\rho_0,u)$.

Fixing a point $p\in M$, we may find an open 
neighborhood $\mathcal{U}(p)\subset M$ of $p$ and 
coordinates $\gamma_p:\mathcal{U}(p)\to \R^d$ such 
that $\gamma_p(\mathcal{U}(p))=\R^d$. By compactness of $M$, 
there is a finite atlas $\mathcal{A}$ with 
these properties, namely there exist $p_1,\ldots,p_K$ 
such that $M=\bigcup_{l=1}^K\mathcal{U}(p_l)$ 
and $\gamma_{p_l}\bigl(\mathcal{U}(p_l)\bigr)=\R^d$.

\begin{remark}\label{rem:bounded-deriv}
To construct these coordinates, one is usually led 
to use that the ball $B_1(0)\subset\R^d$ is diffeomorphic 
to the whole space, for instance via the map
$$
\Phi:\R^d\to B_1(0), \quad 
z\mapsto \frac{z}{\sqrt{1+|z|^2}}.
$$
If we now have a $C^1$ function $f:B_1(0)\to \R$ 
with bounded derivatives, then it is 
straightforward to check that $f\circ\Phi$ 
has bounded derivatives as well.
\end{remark}

Fix a point $p_l$. In the 
coordinates given by $\gamma_{p_l}$, \eqref{eq:target} 
looks like 
\begin{equation*}
\begin{split}
& d\rho^l + \left[ \rho^l\Divh u 
 + \partial_j\rho^l u^j\right] \, dt 
\\ & \qquad \qquad
+ \sum_{i=1}^N \left[ \partial_j\rho^la_i^j 
+ \rho^l\Divh a_i\right]\circ dW^i_t = 0 
\quad \text{on $[0,T]\times \R^d$}, 
\\ & \rho^l(0)= \rho_0\circ \gamma_{p_l}^{-1}
\quad \mbox{on $\R^d$}.
\end{split}
\end{equation*}
Observe that the coefficients satisfy 
the hypotheses on pages 264 and 267 in 
\cite{Kunita82}. In particular, the $z$-derivatives 
are bounded, in view of Remark \ref{rem:bounded-deriv}. 
(In Kunita's notation we have
\begin{align*}
& Q_0(t,x,v)=v\Divh u, 
\quad 
Q_j(t,x,v)=v \Divh a_j, 
\quad 
j=1,\ldots , N
\\ &
P_0^r = u^r, 
\quad 
P^r_j = a^r_j, 
\quad 
r=1,\ldots,d, 
\quad 
j=1,\ldots,N
\\ &
Q^{(1)}_0 = \Divh u, 
\quad Q^{(0)}_0=0,
\\ &
Q^{(1)}_j = \Divh a_j, 
\quad 
Q^{(0)}_j=0, 
\quad 
j=1,\ldots , N.)
\end{align*}
Therefore, we may apply \cite[Theorem 4.2]{Kunita82} 
to obtain a unique strong solution 
which we call $\rho^l$ (for the definition 
of strong solution, see \cite[p.~255]{Kunita82}). 
Let us ``lift'' $\rho^l$ on $M$, 
via $\gamma_{p_l}$, namely, for 
$t\in [0,T]$ define
$$
\hat{\rho}^l(t,x):=
\begin{cases}
\rho^l\bigl(t,\gamma_{p_l}(x)\bigr), 
& x\in \mathcal{U}(p_l),
\\ 
0, 
& x\notin \mathcal{U}(p_l).
\end{cases}
$$
We repeat this procedure for all $p_l$, 
thereby obtaining $\hat{\rho}^l$ 
for $l\in\seq{1,\ldots, K}$. 

Suppose that $\mathcal{U}(p_l)
\cap\mathcal{U}(p_{l'})\neq \emptyset$, 
for some $l\neq l'$. Fix 
$q\in \mathcal{U}(p_l)\cap\mathcal{U}(p_{l'})$. 
Arguing as above, we may find coordinates 
$\eta_q:\mathcal{V}(q)\to \R^d$ such that 
$\mathcal{V}(q)$ is an open neighborhood of $q$ with 
$\mathcal{V}(q)\subset \mathcal{U}(p_l)
\cap\mathcal{U}(p_{l'})$, and 
$\eta_q\bigl(\mathcal{V}(q)\bigr)=\R^d$. 
Once again, we can find a unique 
strong solution $\rho_q$, which we 
lift on $M$: for $t\in [0,T]$ define
$$
\hat{\rho}_q(t,x)
:=
\begin{cases}
\rho_q\bigl(t,\eta_{q}(x)\bigr), 
& x\in \mathcal{V}(q),
\\
0, 
& x\notin \mathcal{V}(q).
\end{cases}
$$

We now restrict $\hat{\rho}^l(t,\cdot)$ 
on $\mathcal{V}(q)$. Trivially, the 
restriction satisfies \eqref{eq:target} 
on $\mathcal{V}(q)$. This is a geometric equation 
(and thus coordinate-independent), which implies that 
the restriction of $\hat{\rho}^l(t,\cdot)$ 
to $\mathcal{V}(q)$ must satisfy \eqref{eq:target} 
when written in the coordinates given by $\eta_q$. 
By uniqueness in $\R^d$ (of strong solutions), 
we must have $\hat{\rho}^l\bigl(\cdot,\eta_q^{-1}(\cdot)\bigr)
=\rho_q(\cdot,\cdot)$ on $[0,\infty)\times \R^d$, 
and thus $\hat{\rho}^l(t,x)=\hat{\rho}_q(t,x)$ 
for all $t\in [0,T]$ and $x\in\mathcal{V}(q)$. 
By symmetry, we infer
$$
\hat{\rho}^l(t,x)= \hat{\rho}^{l'}(t,x), 
\quad \text{for $(t,x)\in [0,T]
\times \mathcal{V}(q)$}.
$$
Repeating the whole procedure for all 
$q\in \mathcal{U}(p_l)\cap\mathcal{U}(p_{l'})$, 
we conclude that
$$
\hat{\rho}^l(t,x)= \hat{\rho}^{l'}(t,x), 
\quad \text{for $(t,x)\in [0,T]\times 
\bigl(\mathcal{U}(p_l)\cap\mathcal{U}(p_{l'})\bigr)$}.
$$
In view of these compatibility conditions, 
we may unambiguously define 
\begin{equation}\label{eq:strong-sol-defining-eqn}
\rho(t,x):= \hat{\rho}^l(t,x), 
\quad (t,x)\in [0,T]\times M,
\end{equation}
where $l$ is an index in $\seq{1,\ldots, K}$ 
such that $x\in\mathcal{U}(p_l)$. 

We have thus arrived at

\begin{lemma}[strong solution, smooth data]
\label{lem:exist-strong-sol}
The function $\rho$ given by \eqref{eq:strong-sol-defining-eqn} 
is the unique strong solution of \eqref{eq:target} 
with initial datum $\rho_0\in C^\infty(M)$ and 
smooth vector field $u:[0,\infty)\times M\to TM$. 
Moreover, $\rho$ is a $C^\infty$ semimartingale.
\end{lemma}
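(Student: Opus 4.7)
The construction carried out in the paragraphs preceding the statement already exhibits the candidate $\rho$; the plan is to verify that it satisfies all three assertions of the lemma. \textbf{Well-definedness} of \eqref{eq:strong-sol-defining-eqn} is the first item on the checklist: on each pairwise intersection $\mathcal{U}(p_l)\cap\mathcal{U}(p_{l'})$ the compatibility identity $\hat{\rho}^l=\hat{\rho}^{l'}$ was secured by the intermediate-chart argument (introducing $\eta_q$ at an arbitrary $q$ in the intersection and invoking Euclidean uniqueness), so the value assigned to $\rho(t,x)$ does not depend on the chart used to represent $x$. The sets $\{\mathcal{U}(p_l)\}_{l=1}^K$ form a finite open cover of $M$, so $\rho$ is defined globally on $[0,T]\times M$.

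\textbf{The strong-solution property} is what I would verify next. Strongness in Kunita's sense is preserved under smooth coordinate changes because \eqref{eq:target} is a coordinate-free (geometric) identity: each $\rho^l$ satisfies the pulled-back SPDE pointwise in $z\in\R^d$ by Kunita's Theorem 4.2 in \cite{Kunita82}, so $\hat{\rho}^l$ satisfies \eqref{eq:target} pointwise on $\mathcal{U}(p_l)$, and as $l$ ranges over $\{1,\ldots,K\}$ this yields the identity on all of $M$. Adaptedness to $\{\mathcal{F}_t\}$ is inherited chart-wise from the adaptedness of each $\rho^l$, and continuity in $t$ of the stochastic integrals follows from the continuity of $\rho^l$ in $z$ together with the smooth extension of $u$, $\{a_i\}$ to $M$.

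\textbf{Uniqueness} follows from the same uniqueness in \cite[Theorem 4.2]{Kunita82} that powered the gluing. Given another strong solution $\tilde{\rho}$ with data $(\rho_0,u)$, the pullback $\tilde{\rho}\circ\gamma_{p_l}^{-1}$ solves the Euclidean SPDE on $\R^d$ with the same smooth coefficients and the same initial datum $\rho_0\circ\gamma_{p_l}^{-1}$; hence it must equal $\rho^l$ $\P$-a.s.\ on $[0,T]\times\R^d$, so $\tilde{\rho}=\rho$ on $[0,T]\times\mathcal{U}(p_l)$ for every $l$, and covering $M$ by the charts completes the argument.

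\textbf{The $C^\infty$-semimartingale claim} is the most delicate and is where the main (mild) obstacle lies. Kunita's theorem provides, under the hypotheses on p.~264 and p.~267 of \cite{Kunita82}, a modification of $\rho^l$ which is smooth in the spatial variable and a continuous semimartingale in $t$; these hypotheses demand in particular that the coefficient fields and all their $z$-derivatives be bounded on $\R^d$. For this to be applicable, the coefficients $u^j$, $a_i^j$, $\Divh u$, $\Divh a_i$, when expressed in the chart $\gamma_{p_l}$, must have bounded derivatives of every order, which is exactly what is arranged by composing with the diffeomorphism $\Phi$ of Remark \ref{rem:bounded-deriv} so that the relevant coefficient pullbacks factor through a function on the unit ball $B_1(0)$. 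Once each $\rho^l$ is thus a $C^\infty$-semimartingale, smoothness in $x$ on $M$ follows from the compatibility of the local representations on overlaps (they are related by smooth transition maps, and equal there), while the continuous semimartingale structure in $t$ is intrinsic and persists under the gluing.
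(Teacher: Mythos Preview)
Your proposal is correct and follows essentially the same approach as the paper: the lemma is stated immediately after the construction with the words ``We have thus arrived at,'' so the paper's proof \emph{is} the preceding gluing argument, and you are simply spelling out the verification steps (well-definedness, strong-solution property, uniqueness, $C^\infty$-semimartingale regularity) that the paper leaves implicit. Your elaboration on the $C^\infty$-semimartingale point---namely that Remark~\ref{rem:bounded-deriv} must be applied to derivatives of all orders, not just first order, to meet Kunita's full hypotheses---is a useful clarification that the paper glosses over.
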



\subsection{Elementary $L^p$ bound}
Let $\rho$ be the solution constructed above. 
We observe that, in view of the 
results in \cite{Kunita82}, locally in 
the coordinates induced by $\gamma_{p_l}$ 
on $\mathcal{U}(p_l)$, we have the following explicit 
expression for $\rho$:
\begin{equation}\label{eq:expression-strong-sol}
\begin{split}
\rho & \bigl(t,\gamma_{p_l}^{-1} (z)\bigr) 
= \rho^l(t,z)
\\ & \quad 
=\exp\left(\int_0^t\Divh u(s,\xi_s(y))\, ds
+\sum_{i=1}^N\int_0^t\Divh a_i(y)
\circ dW_s^i\right)\Bigg|_{y=\xi_t^{-1}(z)}
\\ & \quad \quad \quad \quad \quad \quad 
\times \rho_0\bigl(\gamma_{p_l}^{-1} \circ \xi^{-1}_t(z)\bigr)
\\ & \quad 
=\exp\left(\int_0^t\Divh u(s,\xi_s(y))ds\right)
\Bigg|_{y=\xi_t^{-1}(z)}
\exp\left(\sum_{i=1}^N \Divh a_i(\xi_t^{-1}(z)) W_t^i\right) 
\\ & \quad \quad \quad \quad \quad \quad \times  
\rho_0 \bigl(\gamma_{p_l}^{-1} \circ \xi^{-1}_t(z)\bigr),
\end{split}
\end{equation}
for $(t,z)\in [0,T]\times \R^d$, where 
$\xi$ is a stochastic flow of 
diffeomorphisms, satisfying
\begin{equation*}
d\xi_t(z) = -u(t,\xi_t(z))\, dt 
-\sum_{i=1}^Na_i(\xi_t(z))\circ dW^i_t, 
\qquad \xi_0(z)=z.
\end{equation*}
Here the vector fields $u,a_i$ are 
seen as vectors in $\R^n$ 
through our coordinate system. 

Let us derive an $L^p$ bound. 
Fix $p\in [1,\infty)$ and let $(\chi_l)_l$ 
be a smooth partition of unity 
subordinated to our atlas $\mathcal{A}$. We have
\begin{equation*}
\begin{split}
\int_M \chi_l(x) \abs{\rho(t,x)}^p\, dV_h(x) 
& = \int_{\supp \chi_l} \chi_l(x) 
\abs{\rho(t,x)}^p\, dV_h(x)
\\ & =\int_{\gamma_{p_l}(\supp \chi_l)} 
\chi_l \bigl(\gamma_{p_l}^{-1} (z)\bigr) 
\abs{\rho\bigl(t,\gamma_{p_l}^{-1} (z)\bigr)}^p 
\abs{h_{\gamma_{p_l}}(z)}^{1/2} \, dz,
\end{split}
\end{equation*}
where $\abs{h_{\gamma_{p_l}}}^{1/2}$ 
denotes the determinant of the metric $h$ 
written in the coordinates induced by $\gamma_{p_l}$. 
Using \eqref{eq:expression-strong-sol} 
and the change of variable $z=\xi_t(w)$, 
we obtain
\begin{equation*}
\begin{split}
\int_M &\chi_l(x) \abs{\rho(t,x)}^p\, dV_h(x) 
\\ &
= \int_{\gamma_{p_l}(\supp \chi_l)} 
\chi_l\bigl(\gamma_{p_l}^{-1} (z)\bigr) 
\exp\left(p\int_0^t\Divh u \bigl(s,\xi_s(y)\bigr)\, ds
\right)\Bigg|_{y=\xi_t^{-1}(z)}\\
& \qquad\times  
\exp\left(p\sum_{i=1}^N 
\Divh a_i\bigl(\xi_t^{-1}(z)\bigr) W_t^i\right) 
\,\abs{\rho_0\bigl(\gamma_{p_l}^{-1} \circ \xi^{-1}_t(z)
\bigr)}^p\abs{h_{\gamma_{p_l}}(z)}^{1/2} \, dz
\\ & 
= \int_{\xi_t^{-1}\circ\gamma_{p_l}(\supp \chi_l)} 
\chi_l\bigl(\gamma_{p_l}^{-1}\circ \xi_t(w)\bigr) 
\exp\left(p\int_0^t\Divh u\bigl(s,\xi_s(w)\bigr)\, ds\right) 
\\ & \qquad \times  
\exp\left(p \sum_{i=1}^N \Divh a_i(w) W_t^i\right)  
\,\abs{\rho_0(\gamma_{p_l}^{-1}(w))}^p \abs{\partial\xi_t(w)}
\abs{h_{\gamma_{p_l}}(\xi_t(w)}^{1/2}\, dw
\\ &= \int_{\xi_t^{-1}\circ\gamma_{p_l}(\supp \chi_l)} 
\chi_l\bigl(\gamma_{p_l}^{-1}\circ \xi_t(w)\bigr) 
\exp\left(p\int_0^t\Divh u \bigl(s,\xi_s(w)\bigr)\, ds\right)
\\ & \qquad\times  
\exp\left(p\sum_{i=1}^N \Divh a_i(w) W_t^i\right)
\,\abs{\rho_0\bigl(\gamma_{p_l}^{-1}(w)\bigr)}^p
\abs{h_{\xi_t^{-1}\circ \gamma_{p_l}}(w)}^{1/2}
\, dw.
\end{split}
\end{equation*}
In passing, note that $\xi_t^{-1}\circ\gamma_{p_l}$ 
is a bona fide smooth chart. In the following, 
$C$ denotes a constant that depends only on 
$T$, $p$, $\norm{\Divh u}_{L^\infty_{t,x}}$, 
$\norm{\Divh a_i}_{L^\infty}$ and is allowed 
to vary from line to line. For convenience, set 
$A_i:=\norm{\Divh a_i}_{L^\infty(M)}$. 
We proceed as follows:
\begin{equation*}
\begin{split}
\int_M &\chi_l(x) \abs{\rho(t,x)}^p\, dV_h(x) 
\\ & \leq 
C \int_{\xi_t^{-1}\circ\gamma_{p_l}(\supp \chi_l)} 
\chi_l\bigl(\gamma_{p_l}^{-1}\circ \xi_t(w)\bigr) 
\\ & \qquad\qquad\qquad 
\times \exp\left(p \sum_{i=1}^N A_i \abs{W_t^i}\right)  
\,\abs{\rho_0\bigl(\gamma_{p_l}^{-1}(w)\bigr)}^p 
\abs{h_{\xi_t^{-1}\circ\gamma_{p_l}}(w)}^{1/2}\, dw
\\ & = C\exp\left(p\sum_{i=1}^N A_i 
\abs{W_t^i}\right) \int_{\xi_t^{-1}
\circ\gamma_{p_l}(\supp \chi_l)}
\chi_l\bigl(\gamma_{p_l}^{-1}\circ \xi_t(w)\bigr)
\\ & \qquad \qquad \qquad\times
\, \abs{\rho_0\bigl(\gamma_{p_l}^{-1}(w)\bigr)}^p
\abs{h_{\xi_t^{-1}\circ\gamma_{p_l}}(w)}^{1/2} \, dw
\\ & \leq C  \exp\left(p\sum_{i=1}^N A_i \abs{W_t^i}\right) 
\norm{\rho_0}^p_{L^\infty(M)}
\\ & \qquad \qquad \qquad \times 
\int_{\xi_t^{-1}\circ\gamma_{p_l}(\supp \chi_l)}
\chi_l\bigl(\gamma_{p_l}^{-1}\circ \xi_t(w)\bigr)
\abs{h_{\xi_t^{-1}\circ\gamma_{p_l}}(w)}^{1/2}\, dw
\\ & 
= C\exp\left(p \sum_{i=1}^N A_i \abs{W_t^i}\right)
\norm{\rho_0}^p_{L^\infty(M)}
\int_{\supp \chi_l} \chi_l(x)\, dV_h(x).
\end{split}
\end{equation*}
Taking expectation leads to 
\begin{equation*}
\begin{split}
\EE & \int_M \chi_l(x) \abs{\rho(t,x)}^p\, dV_h(x) 
\\ & \leq C  \norm{\rho_0}^p_{L^\infty(M)} \int_M \chi_l(x)
\, dV_h(x)\, \EE \exp\left(p\sum_{i=1}^N A_i \abs{W_t^i}\right)
\\ & = C  \norm{\rho_0}^p_{L^\infty(M)} 
\int_M \chi_l(x)\, dV_h(x) \,
\prod_{i=1}^N \EE \exp\left(p A_i \abs{W_t^i}\right) 
\\ & \leq C  \norm{\rho_0}^p_{L^\infty(M)} 
\int_M \chi_l(x)\, dV_h(x),
\end{split}
\end{equation*}
where we have used that the Brownian motions are 
independent and satisfy the standard 
estimate \cite[page 54]{Friedman}
\begin{equation*}
\EE \exp \bigl(\alpha \abs{W_t^i}\bigr) 
\leq \beta,\qquad 
t\in [0,T],\,\,\alpha >0,
\end{equation*}
where the constant $\beta$ 
depends on $\alpha$ and $T$. 
Therefore, summing over $l$, we obtain
\begin{equation*}
\EE\norm{\rho(t)}_{L^p(M)}^p \leq 
C \norm{\rho_0}^p_{L^\infty(M)} \int_M \, dV_h(x)
=C\norm{\rho_0}^p_{L^\infty(M)},
\end{equation*}
where the constant $C$ depends on the $L^\infty$ norms 
of $\Divh u$, $\Div a_1,\ldots,\Divh a_N$. 
Since we are assuming that $\rho_0,u\in C^\infty$, 
the right-hand side of the last expression is finite, 
and thus $\rho\in L^\infty_tL^p_{\omega,x}$. 
Moreover, using \cite[Theorem 1.1]{Kunita82}, 
we infer that the stochastic process $(\omega,t)
\mapsto \int_M\rho(t)\psi\,dV_h$ is a 
continuous $\mathcal{F}_t$-semimartingale 
for any $\psi\in C^\infty(M)$. 

Let us summarize all these results in 
\begin{lemma}[$L^p$ estimates, smooth data]
\label{lem:est-strong-sol}
Suppose $\rho_0, u\in C^\infty$. 
Let $\rho$ be the unique strong 
solution of \eqref{eq:target} given 
by Lemma \ref{lem:exist-strong-sol}. 
Then, for any $p\in [1,\infty)$,
$$
\rho\in L^\infty\left([0,T];L^p(\Omega\times M)\right),
\quad \sup_{t\in [0,T]}
\EE\norm{\rho(t)}_{L^p(M)}^p 
\leq C\norm{\rho_0}^p_{L^\infty(M)},
$$
where $C=C\left(p,T,\norm{\Divh u}_{L^\infty([0,T]\times M)},
\max_i\norm{\Divh a_i}_{L^\infty(M)}\right)$. 
Besides, for any $\psi\in C^\infty(M)$, the 
process $(\omega,t)\mapsto \int_M\rho(t)\psi\,dV_h$ 
is a continuous $\mathcal{F}_t$-semimartingale.
\end{lemma}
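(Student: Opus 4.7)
The plan is to bootstrap the $L^p$ estimate from the explicit representation formula that Kunita's theory supplies for the strong solution in each chart, then reassemble the global bound via a partition of unity. Fixing a smooth partition of unity $(\chi_l)_{l=1}^K$ subordinate to the finite atlas $\mathcal{A}$, I would decompose $\int_M \abs{\rho(t)}^p\, dV_h$ chart-wise and read off the local expression for $\rho$ in the coordinates $\gamma_{p_l}$: it is the product of an exponential drift factor $\exp\bigl(\int_0^t \Divh u(s,\xi_s(\cdot))\, ds\bigr)$, a purely multiplicative noise factor $\exp\bigl(\sum_i \Divh a_i(\cdot)\,W^i_t\bigr)$ (the Stratonovich integral collapses because the integrand is deterministic), and $\rho_0$ composed with the inverse stochastic flow $\xi_t^{-1}$ and with $\gamma_{p_l}^{-1}$.

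The workhorse step is the change of variable $z=\xi_t(w)$ inside the Euclidean $z$-integral on each chart. This substitution eliminates the inverse flow from the argument of $\rho_0$ and combines the Jacobian $\abs{\partial \xi_t(w)}$ with $\abs{h_{\gamma_{p_l}}(\xi_t(w))}^{1/2}$ into the metric density $\abs{h_{\xi_t^{-1}\circ\gamma_{p_l}}(w)}^{1/2}$ of a bona fide smooth chart. After the substitution the two exponential factors admit pointwise upper bounds in terms of $\norm{\Divh u}_{L^\infty}$, $\norm{\Divh a_i}_{L^\infty}$, and $\abs{W^i_t}$; the drift factor and $\norm{\rho_0}_{L^\infty(M)}$ are deterministic and pull out of the spatial integral. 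The leftover spatial integral, by reversing the change of variable, returns to $\int_M \chi_l\, dV_h$.

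Taking expectation and using independence of the $W^i$ together with the standard exponential-moment bound $\EE \exp(\alpha \abs{W^i_t}) \leq \beta(\alpha,T)$, one arrives at a constant depending only on the quantities listed in the statement. Summing over $l$ and using $\mathrm{Vol}(M,h)=1$ produces the global estimate, and the membership $\rho \in L^\infty_tL^p_{\omega,x}$ follows immediately. The continuous semimartingale property of $t\mapsto \int_M \rho(t)\psi\, dV_h$ for $\psi\in C^\infty(M)$ comes essentially for free from the joint smoothness of $\rho$ in $(\omega,t,x)$ guaranteed by \cite[Theorem 1.1]{Kunita82}, applied chart-wise and recombined via the partition of unity.

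The main technical nuisance will be the bookkeeping of chart changes and Jacobians — in particular verifying that $\xi_t^{-1}\circ \gamma_{p_l}$ really is a smooth chart whose metric density absorbs the Jacobian of $\xi_t$ exactly, so that the substitution can be undone cleanly and no hidden $\omega$-dependent growth sneaks into the volume element. Once this identification is in place, the remainder of the argument is essentially accounting.
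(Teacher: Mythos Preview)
Your proposal is correct and follows essentially the same route as the paper: partition of unity, Kunita's representation formula in each chart, the change of variable $z=\xi_t(w)$ with the Jacobian absorbed into the metric density of the chart $\xi_t^{-1}\circ\gamma_{p_l}$, pointwise bounds on the two exponential factors, independence plus the exponential-moment estimate for $\abs{W^i_t}$, and finally \cite[Theorem~1.1]{Kunita82} for the semimartingale property. The technical nuisance you flag is exactly the one the paper handles, and your accounting of it is accurate.
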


Let us bring \eqref{eq:target} into its It\^o form, 
still assuming that $\rho_0,u\in C^\infty$. 
We are not going to spell out all the details, 
referring instead to \cite{Kunita82} 
for the missing pieces. The solution $\rho$ 
we have constructed in Lemma \ref{lem:exist-strong-sol} 
is a smooth semimartingale, and it 
satisfies $\P$-a.s.~the following equation:
\begin{equation}\label{eq:convert-1}
\begin{split}
\rho(t,x) & = \rho_0(x) - \int_0^t 
\Divh \left (\rho(s,x)\, u \right)\,ds 
- \sum_{i=1}^N\int_0^t 
\Divh \left(\rho(s,x) \,a_i \right) \,dW^i(s)
\\ & \qquad 
-\frac{1}{2}\sum_{i=1}^N 
\left\langle\Divh \left(\rho(\cdot,x)a_i\right),
W_\cdot^i\right\rangle_t
\\ & 
= \rho_0(x) - \int_0^t \Divh \left(\rho(s,x)\, u \right)\,ds
-\sum_{i=1}^N\int_0^t \Divh \left(\rho(s,x) \,a_i\right)\,dW^i(s)
\\ &\qquad 
-\frac{1}{2}\sum_{i=1}^N \left\langle a_i(\rho(\cdot,x)),
W_\cdot^i\right\rangle_t
-\frac{1}{2}\sum_{i=1}^N \left\langle \rho(\cdot,x),
W_\cdot^i\right\rangle_t \Divh a_i,
\end{split}
\end{equation}
for all $t\in [0,T]$ and $x\in M$, by 
definition of the Stratonovich integral. 
By Theorem 1.1 and Lemma 1.3 in \cite{Kunita82}, 
we obtain 
\begin{equation*}
\begin{split}
& a_i(\rho(t,x)) = a_i(\rho_0(x)) 
-\int_0^t a_i\bigl(\Divh(\rho(s,x)\, u)\bigr)\,ds 
\\ &\quad -\sum_{j=1}^N\int_0^t a_i
\bigl(\Divh( \rho(s,x) \,a_j)\bigr) \,dW^j(s)
-\frac{1}{2}\sum_{j=1}^N \left\langle
a_i\bigl(\Divh(\rho(\cdot,x)a_j)\bigr),W_\cdot^j
\right\rangle_t,
\end{split}
\end{equation*}
and 
\begin{equation}\label{eq:convert-3}
\begin{split}
\left\langle a_i(\rho(\cdot,x)),W^i_\cdot \right\rangle_t
&=- \sum_{j=1}^N\left\langle \int_0^\cdot 
a_i\bigl(\Divh( \rho(s,x) \,a_j)\bigr) \,dW^j(s),
W^i_\cdot\right\rangle_t
\\ & = - \sum_{j=1}^N \int_0^t 
a_i\bigl(\Divh \left( \rho(s,x) \,a_j\right)\bigr) 
\, d\left\langle W^j,W^i\right\rangle_s
\\ &= - \int_0^t a_i\bigl(\Divh \left( \rho(s,x) \,a_i\right)
\bigr) \,ds,
\end{split}
\end{equation}
because the Brownian motions are independent, and 
the time-integral involving $u$ is absolutely 
continuous and thus not 
contributing to the quadratic variation.

Moreover, it is clear that
\begin{equation}\label{eq:convert-4}
\begin{split}
\left\langle \rho(\cdot,x),W^i_\cdot \right\rangle_t 
&= - \sum_{j=1}^N\left\langle \int_0^\cdot 
\Divh \left( \rho(s,x) \,a_j \right) \,dW^j(s),
W^i_\cdot\right\rangle_t
\\ &= - \sum_{j=1}^N \int_0^t 
\Divh \left( \rho(s,x) \,a_j \right) 
\, d\langle W^j,W^i\rangle_s
\\ &= - \int_0^t 
\Divh \left( \rho(s,x) \,a_i \right) \,ds.
\end{split}
\end{equation}
Re-starting from \eqref{eq:convert-1}, 
using \eqref{eq:convert-3} and \eqref{eq:convert-4}, 
we finally arrive at 
\begin{equation*}
\begin{split}
\rho(t,x) &= \rho_0(x) 
-\int_0^t \Divh \left(\rho(s,x)\, u \right)\,ds 
-\sum_{i=1}^N\int_0^t \Divh \left( \rho(s,x) \,a_i \right) 
\, dW^i(s) 
\\ & \qquad\qquad 
+\frac{1}{2}\sum_{i=1}^N \int_0^t 
a_i\bigl(\Divh \left(\rho(s,x)\,a_i\right) \bigr)\,ds
\\ & \qquad\qquad
+\frac{1}{2}\sum_{i=1}^N \int_0^t 
\Divh a_i\,\Divh \left(\rho(s,x)\,a_i \right)\,ds
\\ & = \rho_0(x) - \int_0^t \Divh \left(\rho(s,x)\, u\right)\,ds 
- \sum_{i=1}^N\int_0^t \Divh \left( \rho(s,x) \,a_i \right) 
\,dW^i(s) \\ & \qquad \qquad 
+\frac{1}{2}\sum_{i=1}^N 
\int_0^t \Lambda_i(\rho(s,x))\,ds,
\end{split}
\end{equation*}
where the second order differential equation 
$\Lambda_i$ is defined in \eqref{eq:def-Lambda-i}. 
This is the strong It\^o form of \eqref{eq:target}, 
derived under the assumption that $\rho_0,u\in C^\infty$. 
If we now integrate this against $\psi\in C^\infty(M)$ (say), 
since the It\^o integral admits a Fubini-type theorem, 
we arrive at the weak form given in 
Definition \ref{def:L2-weak-sol-Ito}.

In view of this, combining Lemmas \ref{lem:exist-strong-sol} 
and \ref{lem:est-strong-sol}, we eventually arrive at

\begin{proposition}[weak solution, smooth data]
\label{prop:strongsol-is-weaksol}
Let $\rho$ given by \eqref{eq:strong-sol-defining-eqn} 
be the unique strong solution of \eqref{eq:target} 
with initial datum $\rho_0\in C^\infty(M)$ and 
smooth vector field $u:[0,\infty)\times M\to TM$. 
Then $\rho$ is a weak $L^2$ solution of \eqref{eq:target} 
in the sense of Definition \ref{def:L2-weak-sol}.
\end{proposition}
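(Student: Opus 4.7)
The plan is to verify the two clauses of Definition \ref{def:L2-weak-sol} in turn. For the integrability requirement $\rho\in L^\infty([0,T];L^2(\Omega\times M))$, I would simply invoke Lemma \ref{lem:est-strong-sol} with $p=2$; this already yields the quantitative bound $\sup_t\EE\norm{\rho(t)}_{L^2(M)}^2\le C\norm{\rho_0}_{L^\infty(M)}^2$, which is finite since $\rho_0\in C^\infty(M)$ and $M$ is compact. The continuous-semimartingale property of $t\mapsto \int_M\rho(t)\psi\,dV_h$ is also furnished directly by Lemma \ref{lem:est-strong-sol}, so that part is immediate.

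The substantive task is to derive the weak Stratonovich identity in Definition \ref{def:L2-weak-sol}. Here I would exploit the work already done in the excerpt: the strong Itô form
\[
\rho(t,x)=\rho_0(x)-\int_0^t\Divh(\rho u)\,ds-\sum_{i=1}^N\int_0^t \Divh(\rho a_i)\,dW^i(s)+\frac12\sum_{i=1}^N\int_0^t\Lambda_i(\rho)\,ds
\]
holds pointwise $\P$-a.s., with all integrands smooth in $x$ by Lemma \ref{lem:exist-strong-sol}. I would test this identity against an arbitrary $\psi\in C^\infty(M)$, apply the integration-by-parts formula $\int_M\Divh(f X)\psi\,dV_h=-\int_M f\,X(\psi)\,dV_h$ (valid on the closed manifold $M$) to move derivatives off $\rho$ and onto $\psi$, and invoke a stochastic Fubini theorem to exchange $\int_M dV_h$ with the Itô integral against $W^i$ and with the Lebesgue integral in time. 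The integrability hypotheses of stochastic Fubini are automatic because $\rho$, $a_i$, $u$, and $\psi$ are all smooth and $M$ is compact. The Itô integrals then become $\int_0^t\int_M\rho(s)a_i(\psi)\,dV_h\,dW^i(s)$, and the $\Lambda_i$ term becomes $\frac12\int_0^t\int_M\rho(s)\,a_i(a_i(\psi))\,dV_h\,ds$ after moving the derivatives off $\rho$ twice (since $\Lambda_i$ is the formal adjoint of $a_i(a_i(\cdot))$, as recalled in Section \ref{sec:background}). This produces exactly the weak Itô formulation \eqref{eq:L2weak-sol-Ito} of Definition \ref{def:L2-weak-sol-Ito}.

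Finally, to pass from the Itô weak formulation to the Stratonovich weak formulation of Definition \ref{def:L2-weak-sol}, I would invoke the equivalence between Definitions \ref{def:L2-weak-sol} and \ref{def:L2-weak-sol-Ito} stated in the excerpt as a consequence of \cite{GK}. Concretely, the Stratonovich correction between $\int_0^t\int_M\rho(s)a_i(\psi)\,dV_h\circ dW^i(s)$ and its Itô counterpart is precisely $\tfrac12\langle\int_M\rho(\cdot)a_i(\psi)\,dV_h,W^i\rangle_t$, and a direct computation using the strong Itô SPDE for $\rho$ shows this cross-variation equals $\tfrac12\int_0^t\int_M\rho(s)\,a_i(a_i(\psi))\,dV_h\,ds$, matching exactly the correction term in \eqref{eq:L2weak-sol-Ito}.

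The main point requiring care, rather than a genuine obstacle, is the invocation of stochastic Fubini for the Itô integrals against $W^i$; however, thanks to the smoothness of the strong solution $\rho$ (which is a $C^\infty$ semimartingale by Lemma \ref{lem:exist-strong-sol}) together with the compactness of $M$ and the smoothness of $a_i$ and $\psi$, all integrability hypotheses are trivially satisfied, so the argument is essentially bookkeeping once the strong Itô form in the excerpt is in hand.
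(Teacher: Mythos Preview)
Your proposal is correct and follows essentially the same approach as the paper: invoke Lemma~\ref{lem:est-strong-sol} for the $L^\infty_tL^2_{\omega,x}$ bound and the continuous-semimartingale property, integrate the strong It\^o form (already derived in the text preceding the proposition) against $\psi\in C^\infty(M)$ using stochastic Fubini and integration by parts to obtain the weak It\^o formulation of Definition~\ref{def:L2-weak-sol-Ito}, and then appeal to the equivalence with Definition~\ref{def:L2-weak-sol} from \cite{GK}. The paper's own argument is exactly this, compressed into the sentence ``If we now integrate this against $\psi\in C^\infty(M)$ (say), since the It\^o integral admits a Fubini-type theorem, we arrive at the weak form given in Definition~\ref{def:L2-weak-sol-Ito}.''
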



\section{Time-dependent test functions}
\label{sec:time-dependent-test}

During an upcoming proof (of the $L^2$ estimate), we 
will need a version of the weak formulation 
\eqref{eq:renorm-weak-x-form} that makes  
use of time-dependent test functions. 
The next result supplies that formulation.

\begin{lemma}[space-time weak formulation]
\label{lem:time-dependent-test}
Let $\rho$ be a weak $L^2$ solution of \eqref{eq:target} 
with initial datum $\rho|_{t=0}=\rho_0$. Suppose 
$\rho$ is renormalizable in the sense of 
Definition \ref{def:renorm}. Fix $F\in C^2(\R)$ 
with $F,F',F''\in L^\infty(\R)$. 
For any $\psi\in C^\infty([0,T]\times M)$, 
the following equation holds $\P$-a.s., for any $t\in [0,T]$,
\begin{equation}\label{eq:time-dependent-test}
\begin{split}
&\int_M F(\rho(t))\psi(t) \, dV_h 
- \int_M F(\rho_0)\psi(0)\, dV_h 
\\ & \, 
=\int_0^t\int_M F(\rho(s))\partial_t\psi\, dV_h\,ds 
+\int_0^t\int_M F(\rho(s))\,u(\psi)\,dV_h\,ds
\\ &\quad 
+\sum_{i=1}^N\int_0^t\int_M 
F(\rho(s)) \,a_i(\psi) \, dV_h\, dW^i(s)
+ \frac12 \sum_{i=1}^N \int_0^t \int_M 
F(\rho(s))\, a_i\bigl(a_i(\psi)\bigr) \, dV_h \,ds
\\ & \quad 
-\int_0^t\int_M G_F(\rho(s))\Divh u \,\psi\, dV_h\, ds
-\sum_{i=1}^N\int_0^t\int_MG_F(\rho(s))\Divh a_i 
\,\psi\,dV_h \, dW^i(s) 
\\ & \quad 
-\frac12 \sum_{i=1}^N\int_0^t\int_M 
\Lambda_i(1)\,G_F(\rho(s))\,\psi\,dV_h\,ds
\\ & \quad 
+\frac12\sum_{i=1}^N\int_0^t\int_M 
F''(\rho(s))\bigl(\rho(s)\Divh a_i\bigr)^2\,\psi\,dV_h\,ds
\\ & \quad
-\sum_{i=1}^N\int_0^t\int_M G_F(\rho(s)) 
\bar{a}_i(\psi)\,dV_h \,ds.
\end{split}
\end{equation}
\end{lemma}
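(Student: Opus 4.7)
The plan is to derive \eqref{eq:time-dependent-test} from the time-independent renormalization identity \eqref{eq:renorm-weak-x-form} by applying the It\^o product rule to tensorized test functions and then passing to the limit by density. The key observation is that \eqref{eq:renorm-weak-x-form} asserts that, for each fixed $\eta\in C^\infty(M)$, the process $M_t^\eta := \int_M F(\rho(t))\eta\,dV_h$ is a continuous semimartingale with the decomposition
$$
M_t^\eta - M_0^\eta
= \int_0^t \alpha_s(\eta)\,ds + \sum_{i=1}^N\int_0^t \beta_s^i(\eta)\,dW^i(s),
$$
where $\alpha_s(\cdot)$ and $\beta_s^i(\cdot)$ are linear functionals of the test function, read off from the drift and diffusion terms on the right-hand side of \eqref{eq:renorm-weak-x-form}.

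For a tensor-product test function $\psi(t,x)=\phi(t)\eta(x)$ with $\phi\in C^1([0,T])$ and $\eta\in C^\infty(M)$, set $Y_t := \phi(t)M_t^\eta = \int_M F(\rho(t))\psi(t,\cdot)\,dV_h$. Since $\phi$ is deterministic and of finite variation, the It\^o product rule gives
$$
Y_t - Y_0
= \int_0^t \phi(s)\,dM_s^\eta + \int_0^t \phi'(s)\,M_s^\eta\,ds,
$$
with no cross-variation term. Expanding $dM_s^\eta$, using linearity of $\alpha_s$ and $\beta_s^i$, and recognizing $\phi(s)\eta=\psi(s,\cdot)$ together with $\phi'(s)M_s^\eta = \int_M F(\rho(s))\partial_t\psi(s,\cdot)\,dV_h$, one obtains \eqref{eq:time-dependent-test} for such $\psi$. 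By linearity, the identity extends immediately to any finite sum $\psi(t,x)=\sum_{k=1}^K \phi_k(t)\eta_k(x)$.

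Next I would extend to a general $\psi\in C^\infty([0,T]\times M)$ by density. Expanding $\psi(t,\cdot)$ in the basis $\{e_k\}$ of $L^2(M)$ consisting of eigenfunctions of $\Delta_h$ gives $\psi(t,x) = \sum_k c_k(t)e_k(x)$ with $c_k\in C^\infty([0,T])$, and smoothness of $\psi$ together with elliptic regularity on $M$ forces sufficiently rapid decay of the coefficients that the partial sums $\psi_n$ converge to $\psi$ in $C^2([0,T]\times M)$. This yields uniform convergence of $\partial_t\psi_n$, $u(\psi_n)$, $a_i(\psi_n)$, $a_i(a_i(\psi_n))$ and $\bar{a}_i(\psi_n)$ to their $\psi$-counterparts. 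For every deterministic time-integral on the right-hand side of \eqref{eq:time-dependent-test}, one passes to the limit by dominated convergence, using $F,F'\in L^\infty(\R)$ (hence $F(\rho)$ bounded and $|G_F(\rho)|\le C(1+|\rho|)$), $\rho\in L^\infty_tL^2_{\omega,x}$, and $\Divh u\in L^p_{t,x}$. For the stochastic integrals, uniform convergence of the integrands combined with It\^o's isometry gives convergence in $L^2(\Omega)$, from which a subsequence yields $\P$-a.s.\ equality.

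The main obstacle to anticipate is the density step: the fact that finite tensor sums are dense in $C^2([0,T]\times M)$ is not a direct consequence of Stone-Weierstrass (which only gives uniform, not $C^2$, approximation), and needs to be justified through the spectral expansion just described, together with Weyl-type estimates and Sobolev embeddings on $M$ to quantify the decay of $\|c_k\|_{C^j([0,T])}$ in $k$. Apart from this technical point, the proof is essentially a bookkeeping exercise once \eqref{eq:renorm-weak-x-form} has been recast as a continuous semimartingale decomposition of $M_t^\eta$ with linear dependence on the test function $\eta$.
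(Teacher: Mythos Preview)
Your proposal is correct and follows the same overall strategy as the paper: reduce to tensorized test functions $\psi(t,x)=\phi(t)\eta(x)$, combine the time-independent identity \eqref{eq:renorm-weak-x-form} with the $t$-dependence of $\phi$, and extend by density.

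The implementation differs in two minor respects. First, where you invoke the It\^o product rule for $\phi(t)M_t^\eta$ (clean, since $\phi$ has finite variation and contributes no bracket), the paper instead multiplies \eqref{eq:renorm-weak-x-form} by $\dot\theta(t)$, integrates over $[0,\bar t]$, and integrates each term by parts in $t$---including the stochastic integrals, which is licit because $\int_0^t(\cdots)\,dW^i(s)$ is a continuous semimartingale against which the finite-variation process $\theta$ may be integrated. The two manipulations are equivalent; yours is arguably more transparent. Second, for the density step the paper simply cites the density of tensor products in $C^\infty_c((-1,T+1)\times M)$ (referring to \cite{deRham}), whereas you propose a constructive eigenfunction expansion with spectral decay. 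Your route works but requires the Weyl-law and Sobolev-embedding bookkeeping you flag; the paper sidesteps this entirely by appeal to a standard reference, so if you are content to cite rather than construct, the ``main obstacle'' you identify disappears.
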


\begin{proof}
It is sufficient to consider test functions of the form 
$\psi(t,x)=\theta(t)\phi(x)$, where $\theta\in C^1_c((-1,T+1))$ 
and $\phi\in C^\infty(M)$, because the general 
result will then follow from a density argument for 
the tensor product. We start off from the following 
space-weak formulation, see \eqref{eq:renorm-weak-x-form}:
{\small
\begin{equation*}
\begin{split}
&\int_M F(\rho(t))\phi \, dV_h 
= \int_M F(\rho_0)\phi\, dV_h 
+\int_0^t\int_M F(\rho(s))\,u(\phi)\,dV_h\,ds
\\ &\quad 
+\sum_{i=1}^N\int_0^t\int_M 
F(\rho(s)) \,a_i(\phi) \, dV_h\, dW^i(s)
+ \frac12 \sum_{i=1}^N \int_0^t \int_M 
F(\rho(s))\, a_i\bigl(a_i(\phi)\bigr) \, dV_h \,ds
\\ & \quad 
-\int_0^t\int_M G_F(\rho(s))\Divh u \,\phi\, dV_h\, ds
-\sum_{i=1}^N\int_0^t\int_MG_F(\rho(s))\Divh a_i 
\,\phi\,dV_h \, dW^i(s) 
\\ & \quad 
-\frac12 \sum_{i=1}^N\int_0^t\int_M 
\Lambda_i(1)\,G_F(\rho(s))\,\phi\,dV_h\,ds
\\ & \quad 
+\frac12\sum_{i=1}^N\int_0^t\int_M 
F''(\rho(s))\bigl(\rho(s)\Divh a_i\bigr)^2\,\phi\,dV_h\,ds
\\ & \quad
-\sum_{i=1}^N\int_0^t\int_M G_F(\rho(s)) 
\bar{a}_i(\phi)\,dV_h \,ds, 
\qquad \text{$\P$-a.s., for any $t\in [0,T]$.}
\end{split}
\end{equation*}
}
We multiply this equation by $\Dot{\theta}(t)$ 
and integrate the result over $t\in \left[0, \bar{t} \right]$. 
All the time-integrals are absolutely continuous 
by definition, and thus we can integrate 
them by parts. For example, 
\begin{equation*}
\begin{split}
\int_0^{\bar{t}} & \Dot{\theta}(t)\int_0^t\int_M 
F(\rho(s))\,u(\phi)\,dV_h\,ds\,dt 
\\ &= \theta({\bar{t}})\int_0^{\bar{t}}
\int_M F(\rho(s))\,u(\phi)\,dV_h\,ds 
- \int_0^{\bar{t}} \theta(t)
\int_M F(\rho(t))\,u(\phi)\,dV_h\,dt,
\end{split}
\end{equation*}
and so forth. We can also integrate by parts the 
stochastic integrals. For example,
\begin{equation*}
\begin{split}
\int_0^{\bar{t}} & \Dot{\theta}(t) \int_0^t 
\int_M	F(\rho(s)) \,a_i(\phi) \, dV_h\, dW^i(s)\,dt
\\ & =\theta({\Bar{t}}) \int_0^{\Bar{t}} 
\int_M	F(\rho(s)) \,a_i(\phi) \, dV_h\, dW^i(s)
- \int_0^{\Bar{t}}\theta(t) 
\int_M	F(\rho(s)) \,a_i(\phi) \, dV_h\, dW^i(t),
\end{split}
\end{equation*}
and so forth. Finally,
\begin{align*}
& \int_0^{\bar t}\Dot{\theta}(t)
\left(\int_M F(\rho(t))\phi \, dV_h 
- \int_M F(\rho_0)\phi\, dV_h\right)\, dt
\\ & \quad = \int_0^{\bar t}
\int_M F(\rho(t)) \Dot{\theta}(t)\phi \, dV_h\, dt
+\int_M F(\rho_0) \theta(0)\phi\, dV_h
-\int_M F(\rho_0) \theta(\bar t)\phi\, dV_h,
\end{align*}
where the last term is aggregated together 
with the other ``$\theta({\Bar{t}}) \int_0^{\Bar{t}}
\left(\cdots\right)$" terms that appear, eventually leading to 
$\int_M F(\rho(\bar t))\theta(\bar t)\phi \, dV_h$. 
Therefore, after many straightforward rearrangements 
of terms, we arrive at (now replacing $\bar t$ by $t$)
{\small
\begin{equation*}
\begin{split}
&\int_M F(\rho(t))\theta(t)\phi \, dV_h 
- \int_M F(\rho_0)\theta(0)\phi\, dV_h  
=\int_0^t\int_M F(\rho(s))\Dot{\theta}(s)\phi\, dV_h\,ds 
\\ &\quad 
+\int_0^t\int_M F(\rho(s))\,u(\theta(s)\phi)\,dV_h\,ds
+\sum_{i=1}^N\int_0^t\int_M 
F(\rho(s)) \,a_i(\theta(s)\phi) \, dV_h\, dW^i(s)
\\ &\quad 
+ \frac12 \sum_{i=1}^N \int_0^t \int_M 
F(\rho(s))\, a_i\bigl(a_i(\theta(s)\phi)\bigr) \, dV_h \,ds
-\int_0^t\int_M G_F(\rho(s))\Divh u \,\theta(s)\phi\, dV_h\, ds
\\ & \quad 
-\sum_{i=1}^N\int_0^t\int_MG_F(\rho(s))\Divh a_i 
\,\theta(s)\phi\,dV_h \, dW^i(s) 
\\ & \quad 
-\frac12 \sum_{i=1}^N\int_0^t\int_M 
\Lambda_i(1)\,G_F(\rho(s))\,\theta(s)\phi\,dV_h\,ds
\\ & \quad 
+\frac12\sum_{i=1}^N\int_0^t\int_M 
F''(\rho(s))\bigl(\rho(s)\Divh a_i\bigr)^2\,\theta(s)\phi\,dV_h\,ds
\\ & \quad
-\sum_{i=1}^N\int_0^t\int_M G_F(\rho(s)) 
\bar{a}_i(\theta(s)\phi)\,dV_h \,ds.
\end{split}
\end{equation*}
}By density of tensor products \cite{deRham}, this equation 
continues to hold for any test function 
$\psi\in C^\infty_c((-1,T+1)\times M)$ 
and thus for any $\psi\in C^\infty([0,T]\times M)$.
\end{proof}


\section{Irregular test functions}\label{sec:non-smooth-test}
We need to insert into the 
weak formulation \eqref{eq:time-dependent-test} 
test functions $\psi(t,x)$ that are non-smooth. 
Clearly, in view of our assumptions, the 
stochastic integrals in \eqref{eq:time-dependent-test}
are zero-mean martingales. Hence, after taking the expectation 
in \eqref{eq:time-dependent-test}, we obtain
\begin{equation}\label{eq:renorm-irregular-test}
\begin{split}
& \EE\int_M F(\rho(t))\psi(t) \, dV_h 
- \EE\int_M F(\rho_0)\psi(0) \, dV_h  
\\ &\quad =
\EE\int_0^t\int_M F(\rho(s))\partial_t\psi \, dV_h\,ds 
+\EE\int_0^t\int_M F(\rho(s))\,u(\psi) \,dV_h\,ds
\\ & \quad \qquad
+\frac12 \sum_{i=1}^N \EE\int_0^t \int_M 
F(\rho(s))\, a_i\bigl(a_i(\psi)\bigr) \, dV_h \,ds
\\ & \quad \qquad
-\EE\int_0^t\int_M G_F(\rho(s))\Divh u \,\psi\, dV_h\, ds
\\ & \quad \qquad
-\frac12 \sum_{i=1}^N\EE\int_0^t\int_M 
\Lambda_i(1)\,G_F(\rho(s))\,\psi\,dV_h\,ds
\\ & \quad \qquad
+\frac12\sum_{i=1}^N\EE\int_0^t\int_M 
F''(\rho(s))\bigl(\rho(s)\Divh a_i\bigr)^2
\,\psi\,dV_h\,ds
\\ & \quad \qquad
-\sum_{i=1}^N\EE\int_0^t\int_M G_F(\rho(s)) 
\bar{a}_i(\psi)\,dV_h \,ds,
\end{split}
\end{equation}
which holds for any test function 
$\psi\in C^\infty([0,T]\times M)$.

The main result of this section is

\begin{lemma}[non-smooth test functions]
\label{lemma:irregular-test-Gronwall}
Let $\rho$ be a weak $L^2$ solution 
of \eqref{eq:target} with initial datum $\rho|_{t=0}=\rho_0$
and assume that $\rho$ is renormalizable. 
Fix $F\in C^2(\R)$ with $F,F',F''\in L^\infty(\R)$. 
Fix a time $t_0\in (0,T]$ and consider 
\eqref{eq:renorm-irregular-test} evaluated at $t=t_0$. 
Then \eqref{eq:renorm-irregular-test} continues to hold 
for any $\psi\in W^{1,2,p}([0,t_0]\times M)$ with $p>d+2$.
\end{lemma}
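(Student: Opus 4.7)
The plan is to proceed by a density-and-passage-to-the-limit argument. By the definition of $W^{1,2,p}([0,t_0]\times M)$ as the completion of $C^\infty$ under the norm \eqref{eq:anisotropic-norm}, there exists a sequence $\{\psi_n\}_{n\ge 1}\subset C^\infty([0,t_0]\times M)$ with $\psi_n\to \psi$ in $W^{1,2,p}$. I would extend each $\psi_n$ smoothly to $[0,T]\times M$ in an arbitrary way (the extension is immaterial because only values on $[0,t_0]$ enter \eqref{eq:renorm-irregular-test} at $t=t_0$), so that \eqref{eq:renorm-irregular-test} at $t=t_0$ holds with $\psi_n$ in place of $\psi$. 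The goal is then to pass to the limit $n\to \infty$ in every term.

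The central tool is Proposition \ref{prop:anisotropic-embed}: since $p>d+2$, the sequence $\psi_n$ and its covariant gradient $\nabla\psi_n$ converge uniformly on $[0,t_0]\times M$ to $\psi$ and $\nabla\psi$, while $\partial_t \psi_n\to \partial_t\psi$ and $\nabla^2\psi_n\to \nabla^2\psi$ in $L^p([0,t_0]\times M)$, hence in $L^1$. I would then treat the terms of \eqref{eq:renorm-irregular-test} in groups. The boundary terms $\EE\int_M F(\rho(t_0))\psi_n(t_0)\,dV_h$ and $\EE\int_M F(\rho_0)\psi_n(0)\,dV_h$ pass to the limit via uniform convergence of $\psi_n$ and boundedness of $F$. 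The drift term with $\partial_t\psi_n$ converges from $L^1$ convergence of $\partial_t\psi_n$ and $F(\rho)\in L^\infty$. The transport term $\int F(\rho)u(\psi_n)\,dV_h\,ds = \int F(\rho)(u,\Grad\psi_n)_h\,dV_h\,ds$ uses uniform convergence of $\Grad\psi_n$ together with $u\in L^1([0,T];\overrightarrow{L^\infty(M)})$ from \eqref{eq:u-ass-2}. For the second-order term, I would expand $a_i(a_i(\psi_n))=(\nabla^2\psi_n)(a_i,a_i)+(\nabla_{a_i}a_i)(\psi_n)$: the first piece uses $L^1$ convergence of $\nabla^2\psi_n$ and smoothness of $a_i$, the second uses uniform convergence of $\Grad\psi_n$.

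For the ``bad" term $\EE\int_0^{t_0}\int_M G_F(\rho)\Divh u\,\psi_n\,dV_h\,ds$, I would estimate the difference with the limit by $\|\psi_n-\psi\|_{L^\infty}\,\EE\int_0^{t_0}\int_M |G_F(\rho)|\,|\Divh u|\,dV_h\,ds$, which is finite because $|G_F(\xi)|\le C(1+|\xi|)$, $\rho\in L^\infty_t L^2_{\omega,x}$, and $\Divh u\in L^p_{t,x}\subset L^2_{t,x}$ from \eqref{eq:u-ass-3} (applying Cauchy--Schwarz). The terms $\Lambda_i(1)G_F(\rho)\psi_n$ and $F''(\rho)(\rho\Divh a_i)^2\psi_n$ all involve smooth bounded coefficients ($\Lambda_i(1)=\Divh\bar a_i\in L^\infty$, $\Divh a_i\in L^\infty$, $F''\in L^\infty$) and an extra factor at most quadratic in $\rho$ which lies in $L^\infty_tL^1_{\omega,x}$; uniform convergence of $\psi_n$ then closes the argument. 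Finally, $\bar a_i(\psi_n)=(\Divh a_i)(a_i,\Grad\psi_n)_h$ converges uniformly and pairs against $G_F(\rho)\in L^\infty_tL^2_{\omega,x}$.

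The main (but mild) obstacle is the group of terms containing $\Grad\psi$, namely $u(\psi)$, the lower-order part $(\nabla_{a_i}a_i)(\psi)$ of $a_i(a_i(\psi))$, and $\bar a_i(\psi)$: these are not controlled by $W^{1,2,p}$ convergence alone, and require the upgrade to uniform convergence of $\Grad\psi_n$ supplied by Proposition \ref{prop:anisotropic-embed}. This is precisely where the hypothesis $p>d+2$ is essential. Everything else is book-keeping: all integrals on the right-hand side of \eqref{eq:renorm-irregular-test} make sense for $\psi\in W^{1,2,p}$ with $p>d+2$, and each converges along the approximating sequence, yielding the desired extension.
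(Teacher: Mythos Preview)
Your proposal is correct and follows essentially the same approach as the paper's proof: approximate $\psi$ by smooth $\psi_n$ in $W^{1,2,p}$, extend to $[0,T]\times M$, and pass to the limit in each term using the embedding of Proposition~\ref{prop:anisotropic-embed}. You actually supply considerably more detail than the paper, which simply notes that with $\psi_j\to\psi$ and $\nabla\psi_j\to\nabla\psi$ uniformly, the passage to the limit is ``straightforward (repeated applications of H\"older's inequality)'' given $\rho\in L^\infty_tL^2_{\omega,x}$ and $u\in L^1_t\overrightarrow{W^{1,2}_x}$; the only additional ingredient the paper makes explicit is that the smooth extension from $[0,t_0]\times M$ to $[0,T]\times M$ exists by a Seeley-type result (Proposition~\ref{prop:Seeley}).
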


\begin{proof}
By Proposition \ref{prop:anisotropic-embed}, 
$W^{1,2,p}([0,t_0]\times M)$ compactly embeds into 
$C^0([0,t_0]\times M)$ (since $p>d+2$). 
Moreover, the first order $x$-derivatives of 
a $W^{1,2,p}$ function belong to $C^0([0,t_0]\times M)$. 
Therefore, given a function $\psi \in W^{1,2,p}([0,t_0]\times M)$, 
the very definition of $W^{1,2,p}$ implies the 
existence of a sequence $\seq{\psi_j}_{j\ge 1}
\subset C^\infty([0,t_0]\times M)$ such 
that $\psi_j\to \psi$ in $W^{1,2,p}([0,t_0]\times M)$.
Besides, we have
$$
\psi_j\to \psi, 
\quad 
\nabla\psi_j\to \nabla\psi  
\quad 
\text{uniformly on $[0,t_0]\times M$}.
$$
We extend the functions $\psi_j$ to $C^\infty([0,T]\times M)$ 
by means of Proposition \ref{prop:Seeley}. These 
extensions are also denoted by $\psi_j$. 
Consequently, we can insert $\psi_j$ 
into \eqref{eq:renorm-irregular-test}.  

Equipped with the above convergences and the assumptions 
$\rho\in L^\infty_tL^2_{\omega,x}$ and 
$u\in L^1_t\overrightarrow{W_x^{1,2}}$, it is straightforward 
(repeated applications of H\"older's inequality) to
verify that \eqref{eq:renorm-irregular-test} 
holds for test functions $\psi$ 
that belong to $W^{1,2,p}([0,t_0]\times M)$.
\end{proof}


\section{On the ellipticity of $\sum_i a_i(a_i)$, 
proof of Lemma \ref{lem:ellipticity}}\label{sec: ellipticity}

In this section we will prove Lemma \ref{lem:ellipticity}. 
Before doing that, however, let us explain why the 
second order differential operator 
$\sum_i a_i\bigl(a_i(\cdot)\bigr)$, in general, fails 
to be non-degenerate (elliptic). To this end, we introduce 
the following (smooth) sections of the endomorphisms over $TM$:
\begin{equation}\label{eq:def-Ai}
\mathcal{A}_i(x)X := 
\bigl(X,a_i(x)\bigr)_h\,a_i(x),
\qquad x\in M,\,\, X\in T_xM,
\quad i=1,\ldots,N. 
\end{equation}
It is clear that these sections are 
symmetric with respect to $h$, namely 
$$
\bigl(\mathcal{A}_i(x)X,Y\bigr)_h
=\bigl(X,\mathcal{A}_i(x)Y\bigr)_h,
\qquad x\in M,\,\, X,Y\in T_xM.
$$
Set 
\begin{equation}\label{eq:def-A}
\mathcal{A}:=\mathcal{A}_1+\cdots +\mathcal{A}_N,
\end{equation}
which is still a smooth section of 
the symmetric endomorphisms over $TM$. 
Given the sections $\mathcal{A}_1,\ldots, 
\mathcal{A}_N$ and $\mathcal{A}$, we define the following 
second order linear differential operators in divergence form:
\begin{align*}
& C^2(M)\ni \psi \mapsto 
\Divh \left(\mathcal{A}_i\nabla_h\psi\right),
\qquad i=1,\ldots ,N,
\\ &
C^2(M)\ni \psi \mapsto 
\Divh \left(\mathcal{A}\nabla_h\psi\right) 
= \sum_{i=1}^N \Divh(\mathcal{A}_i\nabla_h\psi).
\end{align*}

Observe that the following 
identity holds trivially:
$$
a_i\bigl(a_i(\psi)\bigr) = 
\Divh \left(\mathcal{A}_i\nabla_h\psi\right) 
- \bar{a}_i(\psi),
$$
thus 
\begin{equation}\label{eq:geom-id-aa_i-A}
\sum_{i=1}^N a_i\bigl(a_i(\psi)\bigr) 
=\Divh \left(\mathcal{A}\nabla_h\psi \right) 
-\sum_{i=1}^N\bar{a}_i(\psi),
\end{equation}
where $\bar{a}_i$ is short-hand for 
the first order differential operator $(\Divh a_i)\, a_i$. 
Thus $\sum_{i=1}^N a_i\bigl(a_i(\cdot)\bigr)$ 
is non-degenerate (elliptic) if and only if 
$\Divh \left(\mathcal{A}\nabla_h\cdot\right)$ is so.

In view of \eqref{eq:geom-id-aa_i-A}, 
let us see why the induced differential 
operator $\sum_{i=1}^N a_i\bigl(a_i(\cdot)\bigr)$ 
may degenerate. From the very definition 
of $\mathcal{A}$, we have 
$$
\bigl(\mathcal{A}(x)X,X\bigr)_h 
=\sum_{i=1}^N \bigl(\mathcal{A}_i(x)X,X\bigr)_h 
= \sum_{i=1}^N \bigl(X,a_i(x)\bigr)_h^2, 
\qquad x\in M, \, \, X\in T_xM, 
$$
and the last expression may be zero unless 
we can find vector fields $a_{i_1}(x),\ldots,a_{i_d}(x)$ 
that constitute a basis for $T_xM$. Note that this can 
also happen in the ``ideal'' case $N=d$, 
that is, one can always find suitable $x\in M$ 
and $X\in T_xM$ such that $\bigl(\mathcal{A}(x)X,X\bigr)_h =0$. 
The explanation for this fact is geometric in nature. 
In general, given an arbitrary $d$-dimensional 
smooth manifold $M$, it is not possible to construct 
a global frame, i.e., smooth vector fields 
$E_1,\ldots ,E_d$ forming a basis for $T_xM$ for all $x\in M$. 
If this happens, the manifold is called 
\textit{parallelizable}. Examples of 
parallelizable manifolds are Lie groups (like $\R^d$, 
$\mathbb{T}^d$) and $\mathbb{S}^d$ 
with $d\in \seq{1,3,7}$.

Nevertheless, by compactness of $M$, one can 
always find vector fields $a_1,\ldots ,a_N$ 
with $N\geq d$, depending on the geometry of $M$, 
such that the resulting operator 
$\Divh \left(\mathcal{A}\nabla_h\cdot\right)$ 
becomes the Laplace-Beltrami operator (and hence elliptic). 
In other words, to implement our strategy of 
using noise to avoid density concentrations, 
we will add to the original SPDE 
\eqref{eq:target} as many independent Wiener 
processes and first order differential operators 
$\bar a_1,\ldots ,\bar a_N$ as deemed necessary 
by the geometry of the manifold itself. 
Note that in the Euclidean case 
\cite{Attanasio:2011fj,Beck:2019,Flandoli-Gubinelli-Priola} 
one can always resort to the canonical differential operators 
$a_i=\partial_i$ and thus $\sum_{i=1}^N a_i\bigl(a_i(\cdot)\bigr) 
=\Divh \left(\mathcal{A}\nabla_h\cdot \right)=\Delta \cdot$ 
(with $N=d$). This simple approach does 
not work for us because of the Riemannian structure 
of the underlying domain $M$.

Having said all of that, let us now return to the proof 
of Lemma \ref{lem:ellipticity}, which will be a 
trivial consequence of the following crucial result:

\begin{lemma}\label{lem:exist-ai}
There exist $N=N(M)$ smooth vector fields 
$a_1,\ldots ,a_N$ on $M$ such that the corresponding 
section $\mathcal{A}$, see \eqref{eq:def-Ai} 
and \eqref{eq:def-A}, satisfies
$$
\bigl(\mathcal{A}(x)X,Y\bigr)_h 
= 2\left(X,Y\right)_h, 
\qquad \forall x\in M, \, \, 
\forall X,Y\in T_xM.
$$
Consequently, $\mathcal{A}(x) = 2\,I_{T_xM}$ 
for all $x\in M$. 
\end{lemma}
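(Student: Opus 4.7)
}
The plan is to construct the vector fields explicitly via a partition of unity together with local orthonormal frames on $(M,h)$. Since $M$ is compact, I would first choose a finite open cover $\seq{U_k}_{k=1}^K$ by coordinate charts on each of which the tangent bundle is trivial. On each $U_k$, applying the Gram--Schmidt procedure (with respect to $h$) to the coordinate vector fields produces a smooth local orthonormal frame $E^k_1,\ldots,E^k_d$ satisfying $(E^k_j,E^k_l)_h=\delta_{jl}$ pointwise.

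The second ingredient is a \emph{quadratic} partition of unity subordinate to the cover: smooth functions $\phi_k\colon M\to [0,1]$ with $\supp\phi_k\Subset U_k$ and $\sum_{k=1}^K \phi_k^2\equiv 1$. Starting from any ordinary smooth partition of unity $\seq{\psi_k}_{k=1}^K$ subordinate to $\seq{U_k}$, I would set
\[
\phi_k := \frac{\psi_k}{\bigl(\sum_{l=1}^K \psi_l^2\bigr)^{1/2}},
\]
the denominator being smooth and strictly positive on $M$, so each $\phi_k$ is smooth. The point of this normalization is precisely to avoid the square-root singularities that would appear if one naively tried to use $\sqrt{2\psi_k}\,E^k_j$, a singularity whose absence is the only subtle issue in the whole argument.

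Third, I would define the global smooth vector fields
\[
a_{k,j}\;:=\;\sqrt{2}\,\phi_k\, E^k_j\quad\text{on } U_k,\qquad a_{k,j}\equiv 0\quad\text{outside }\supp \phi_k,
\]
for $k=1,\ldots,K$, $j=1,\ldots,d$; smoothness is guaranteed because $\phi_k$ vanishes (smoothly) near $\partial U_k$. Relabelling produces $a_1,\ldots,a_N$ with $N=Kd$, a number depending only on the geometry of $M$.

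Finally, I would verify the identity by a direct pointwise computation. Fix $x\in M$ and $X,Y\in T_xM$. For each $k$ with $\phi_k(x)>0$, the orthonormality of $\seq{E^k_j(x)}_{j=1}^d$ yields the Parseval-type identity
\[
\sum_{j=1}^d \bigl(X,E^k_j(x)\bigr)_h \bigl(E^k_j(x),Y\bigr)_h \;=\; (X,Y)_h,
\]
and therefore, recalling the definitions \eqref{eq:def-Ai}--\eqref{eq:def-A},
\[
\bigl(\mathcal{A}(x)X,Y\bigr)_h \;=\; \sum_{k,j} 2\,\phi_k^2(x)\bigl(X,E^k_j\bigr)_h\bigl(E^k_j,Y\bigr)_h \;=\; 2(X,Y)_h\sum_{k=1}^K \phi_k^2(x) \;=\; 2(X,Y)_h.
\]
By non-degeneracy of $h$ this forces $\mathcal{A}(x)=2\,I_{T_xM}$, completing the proof. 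The only genuinely delicate step is the construction of the quadratic partition of unity described above; the remainder is a direct computation exploiting the orthonormality of each local frame.
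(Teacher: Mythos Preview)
Your proof is correct and essentially identical to the paper's: both use local orthonormal frames from Gram--Schmidt, a quadratic partition of unity $\sum_k\phi_k^2\equiv 1$, and the rescaled fields $\sqrt{2}\,\phi_k E^k_j$; the paper computes $(\mathcal{A}X,X)_h$ first and then invokes polarization, whereas you compute the bilinear form directly, but this is a cosmetic difference. Your explicit construction $\phi_k=\psi_k/(\sum_l\psi_l^2)^{1/2}$ of the quadratic partition of unity is a nice detail that the paper leaves implicit.
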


\begin{proof}
Let $p\in M$. Then, by means of the Gram-Schmidt algorithm, 
we can easily construct a local orthonormal 
frame near $p$, that is, a local frame 
$E_{p,1},\ldots , E_{p,d}$ defined 
in an open neighborhood $\mathcal{U}_p$ of $p$ 
that forms an orthonormal basis for the tangent 
space at each point of the neighborhood 
(see \cite[p.~24]{LeeRiemann} for details). Since 
$\seq{\mathcal{U}_p}_{p\in M}$ forms an open 
covering of $M$, the compactness of $M$ ensures 
the existence of $p_1,\ldots ,p_L\in M$ such 
that $\bigcup_{j=1}^L\mathcal{U}_{p_j}=M$ 
and a collection of locally smooth vector fields 
$\seq{E_{p_j,i}}_{\overset{i=1,\ldots ,d}{j=1,\ldots ,L}}$ 
with the aforementioned property. Let us now consider 
a smooth partition of unity subordinate 
to $\seq{\mathcal{U}_{p_j}}_{j=1}^L$, which we 
may write as $\seq{\alpha_j^2}_{j=1}^L$, where 
$\alpha_j\in C^\infty(M)$ and $\sum_{j=1}^L\alpha^2_j=1$. 
Set $\tilde{E}_{p_j,i}:=\alpha_j E_{p_j,i}$, 
for $i=1,\ldots ,d$ and $j=1,\ldots ,L$. 
Extending these vector fields by 
zero outside their supports, we 
obtain global smooth vector fields on $M$. 

Observe that if $\alpha_j(x)\neq 0$, then 
$x\in \left(\supp \alpha_j\right)^\circ
=\left(\supp \alpha^2_j\right)^\circ
\subset \mathcal{U}_{p_j}$. As a result, 
$E_{p_j,1}(x),\ldots , E_{p_j,d}(x)$ constitute 
an orthonormal basis for $T_xM$.  
For convenience, we rename the vector fields 
$\seq{\tilde{E}_{p_j,i}}_{\overset{i=1,\cdots ,d}{j=1,\cdots ,L}}$ 
as $\beta_1,\ldots , \beta_N$, where $N:=d\cdot L$.

As before, we define sections $\mathcal{B}_i$ of the 
endomorphisms over $TM$ by setting
$$
\mathcal{B}_i(x)X := 
\bigl(X,\beta_i(x)\bigr)_h\,\beta_i(x),
\qquad x\in M,\,\,  X\in T_xM,
$$
for $i=1,\ldots,N$, and $\mathcal{B}:=\mathcal{B}_1
+\cdots+\mathcal{B}_N$.

For an arbitrary $x\in M$ and $X\in T_xM$, we compute
\begin{align*}
\bigl(\mathcal{B}(x)X,X\bigr)_h 
& = \sum_{k=1}^N  \bigl(X,\beta_k(x)\bigr)_h^2
=\sum_{j=1}^L\sum_{i=1}^d  
\left(X,\tilde{E}_{p_j,i}(x)\right)_h^2
\\ & =\sum_{j: \alpha_j(x)\neq 0}
\, \sum_{i=1}^d  \left(X,E_{p_j,i}(x)\right)_h^2 \alpha^2_j(x)
\\ & =\sum_{j: \alpha_j(x)\neq 0}\alpha^2_j(x)
\sum_{i=1}^d  \left(X,E_{p_j,i}(x)\right)_h^2
\\ & =\sum_{j: \alpha_j(x)\neq 0}
\alpha^2_j(x) \abs{X}_h^2 = \abs{X}_h^2. 
\end{align*}
By the polarization identity for inner products 
and the symmetry of $\mathcal{B}$, this last 
equality implies that
$$
\bigl(\mathcal{B}(x)X,Y\bigr)_h 
= \left(X,Y\right)_h,
\qquad \forall x\in M, \,\, 
\forall X,Y\in T_xM,
$$
and thus $\mathcal{B}(x)= I_{T_xM}$. 

Setting $a_i:=\sqrt{2}\beta_i$, $i=1,\ldots,N$, 
concludes the proof of the lemma.
\end{proof}

\begin{proof}[Proof of Lemma \ref{lem:ellipticity}]
Fix $\psi\in C^2(M)$. In view of Lemma \ref{lem:exist-ai}, 
the identity \eqref{eq:geom-id-aa_i-A} becomes
$$
\sum_{i=1}^N a_i\bigl(a_i(\psi)\bigr) 
= 2\Divh \left(\nabla_h\psi\right) 
-\sum_{i=1}^N\bar{a}_i(\psi) 
=2\Delta_h\psi-\sum_{i=1}^N\bar{a}_i(\psi),
$$
where $\bar{a}_i=(\Divh a_i)\, a_i$.
\end{proof}

From now on, we will be using the vector 
fields $a_1,\ldots ,a_N$ constructed in 
Lemma \ref{lem:exist-ai}, in which case 
the It\^{o} SPDE \eqref{eq:target-Ito} becomes
\begin{equation}\label{eq:target-Ito-special-noise}
d\rho + \Divh \left(\rho\left[ u 
-\frac{1}{2} \sum_{i=1}^N 
\bar{a}_i\right]\right)\, dt 
+ \sum_{i=1}^N \Divh( \rho \,a_i) \, dW^i(t) 
- \Delta_h \rho \, dt= 0.
\end{equation}
The space-weak formulation of this SPDE is
\begin{equation*}
\begin{split}
& \int_M \rho(t)\psi\, dV_h 
= \int_M \rho_0\psi\, dV_h 
+\int_0^t\int_M\rho(s)
\left[\,u(\psi)-\frac12\sum_{i=1}^N \bar{a}_i(\psi) 
\right]\,dV_h \,ds \\ & \qquad
+ \sum_{i=1}^N\int_0^t\int_M \rho(s) 
\,a_i(\psi) \, dV_h\, dW^i(s)
+ \int_0^t \int_M \rho(s) \,\Delta_h\psi \, dV_h \,ds,
\end{split}
\end{equation*}
see Definition \ref{def:L2-weak-sol-Ito} 
and equation \eqref{eq:L2weak-sol-Ito}.


\section{Test function for duality method}

In this section we first construct 
a solution to the following parabolic 
Cauchy problem on the manifold $M$: given $0<t_0\leq T$, solve 
\begin{equation}\label{eq:aux-Cauchy-prob}
\begin{cases}
\partial_t v - \Delta_h v + b(t,x) v
= f(x,t) \;\; \mbox{on $[0,t_0]\times M$},\\
v(0,x) = 0 \;\; \mbox{on $M$},
\end{cases}
\end{equation}
where $b$ and $f$ are given irregular functions 
in $L^p([0,t_0]\times M)$ (with $p\ge 1$ 
to be fixed later). We follow the strategy 
outlined in \cite[p.~131]{Aubin} (for smooth $b, f$), 
making use of Fredholm theory and anisotropic 
Sobolev spaces. Toward the end of this section, we 
utilize the solution of \eqref{eq:aux-Cauchy-prob} 
to construct a test function that will form the core 
of a duality argument given in an upcoming section.

Consider the space $W^{1,2,p}_0([0,t_0]\times M)$, 
which is the subspace of functions in the 
anisotropic Sobolev space $W^{1,2,p}([0,t_0]\times M)$ 
vanishing at $t=0$. Let $L$ designate the 
heat operator on $M$, namely $L=\partial_t-\Delta_h$. 
According to \cite[Thm.~4.45]{Aubin}, $L$ is an 
isomorphism of $W^{1,2,p}_0([0,t_0]\times M)$ 
onto $L^p([0,t_0]\times M)$ for $1\leq p<\infty$. 
Consider the multiplication operator
$$
K_b: W^{1,2,p}_0([0,t_0]\times M)
\to L^p([0,t_0]\times M), 
\qquad v \stackrel{K_b}{\mapsto} bv.
$$
To guarantee that this operator is well-defined, 
we must assume $p>d+2$. In this way, in 
view of Proposition \ref{prop:anisotropic-embed}, 
$W^{1,2,p}_0([0,t_0]\times M)$ compactly 
embeds into $C^0([0,t_0]\times M)$ and the 
first order space-derivatives of 
$v\in W^{1,2,p}_0([0,t_0]\times M)$ are continuous 
on $[0,t_0]\times M$. It then follows that
$$
\int_0^{t_0}\int_M \abs{bv}^p\, dV_h\,dt 
\leq \norm{v}_{C^0}^p \int_0^{t_0}\int_M
\abs{b}^p\, dV_h\,dt, 
$$
guaranteeing that $K_b$ is well-defined.

\medskip
\texttt{Claim.} $K_b$ is compact.

\medskip

\noindent First of all, $K_b$ is continuous:
$$
\norm{K_b v}_{L^p}
\leq \norm{v}_{C^0}
\norm{b}_{L^p} 
\leq C \norm{v}_{W^{1,2,p}_0}
\norm{b}_{L^p},
$$
where $C>0$ is a constant coming from the 
anisotropic Sobolev embedding, consult 
Proposition \ref{prop:anisotropic-embed}. 
Clearly, $W^{1,2,p}_0([0,t_0]\times M)$ is reflexive, 
being a closed subspace of $W^{1,2,p}([0,t_0]\times M)$. 
Hence, to arrive at the claim, it is 
enough to prove that $K_b$ is completely continuous. 
Recall that a bounded linear operator $T:X\to Y$ 
between Banach spaces is called completely 
continuous if weakly convergent sequences 
in $X$ are mapped to strongly converging sequences in $Y$. 
Let $\seq{v_n}_{n\ge 1}$ be a sequence in 
$W^{1,2,p}_0([0,t_0]\times M)$ such that 
$v_n\rightharpoonup v\in W^{1,2,p}_0$. 
By the compact embedding $W^{1,2,p}_0
\subset\subset C^0$, $v_n\to v$ in $C^0$. Hence, 
$$
\norm{K_bv_n-K_bv}_{L^p}
\leq \norm{v_n-v}_{C^0}
\norm{b}_{L^p} \to 0,
$$
and so $K_b$ is completely continuous. 
This concludes the proof of the claim.

\medskip

Next, being an isomorphism, $L$ is a Fredholm 
operator from $W^{1,2,p}_0$ to $L^p$. 
This implies that $L+K_b$ is a Fredholm operator, 
with index $\ind \left(L+K_b\right)
=\ind\left(L\right)$, where trivially 
$\ind\left(L\right)=0$ ($L$ is invertible). 
Thus our goal is to verify the following claim.

\medskip

\texttt{Claim.} Either $\ker\left(L+K_b\right)$ is 
trivial or $\codim \left(R(L+K_b)\right)=0$. 

\medskip

\noindent If this claim holds, then we will 
be able to conclude that \eqref{eq:aux-Cauchy-prob} 
is solvable for any $f\in L^p([0,t_0]\times M)$.  
The proof of the claim is divided into 
three main steps.

\medskip

\texttt{Step 1:} $b\in C^\infty([0,t_0]\times M)$.

\medskip

\noindent Our aim is to show that $\ker\left(L+K_b\right)$ 
is trivial. Let $v\in W^{1,2,p}_0([0,t_0]\times M)$ 
solve $(L+K_b)v=0$. Since $p>d+2$ and $b$ is smooth, it 
follows from parabolic regularity theory that $v$ 
is (at least) in $C^{1,2}([0,t_0]\times M)$. 
Indeed, by the anisotropic Sobolev embedding 
(Proposition \ref{prop:anisotropic-embed}), 
$v\in C^{0,\gamma}([0,t_0]\times M)$ 
with $\gamma=1-\frac{1+d}{p}$. Therefore,
$$
Lv = -bv \in C^{0,\gamma}([0,t_0]\times M),
$$
and $v(0,\cdot)=0$ on $M$. 
Parabolic regularity theory (see 
e.g.~\cite[p.~130]{Aubin}) implies that 
$\partial_t v$ and the second derivatives 
of $v$ with respect to $x$ are H\"older continuous. 

By the chain rule, the function 
$\psi:=\frac{v^2}{2}$ satisfies
$$
L\psi = - \abs{\nabla_hv}_h^2
- 2b\psi \leq -2b\psi.
$$
Since $b$ is bounded and $\psi(0,x)=0$, the maximum 
principle (see \cite[Prop.~4.3]{Chow-Knopf}) 
implies that $\psi\leq 0$ everywhere. 
On the other hand, $\psi\geq 0$ by definition. 
It follows that $\psi\equiv 0$, and so $v\equiv 0$. 

Hence, given any $b\in C^\infty([0,t_0]\times M)$, the 
Cauchy problem \eqref{eq:aux-Cauchy-prob} admits 
a unique solution for any $f\in L^p([0,t_0]\times M)$.

\medskip

\texttt{Step 2:} A priori estimates (smooth data).

\medskip

\noindent Let us consider the more general problem 
\begin{equation}\label{eq:aux-Cauchy-prob2}
\begin{cases}
\partial_t v - \Delta_h v + b(t,x) v
= g(x,t) \;\; \mbox{on $[0,t_0]\times M$},\\
v(0,x) = c(x) \;\; \mbox{on $M$}.
\end{cases}
\end{equation}
where $b,g\in C^\infty([0,t_0]\times M)$ 
and $c\in C^\infty(M)$. This problem admits a unique 
solution $v\in C^{1,2}([0,t_0]\times M)$, 
given by $v=\tilde{v}+c$, where $\tilde{v}$ 
solves \eqref{eq:aux-Cauchy-prob} with right-hand side 
$f=g-cb + \Delta_hc\in L^p([0,t_0]\times M)$. 

From known a priori estimates for the heat equation 
on manifolds (see \cite[Thm.~4.45]{Aubin}), there is 
a constant $C_0=C_0(p,M)$ such that 
$(\tilde{v}=v-c)$,
\begin{align*}
\norm{v}_{W^{1,2,p}} & = \norm{\tilde{v}+c}_{W^{1,2,p}} 
\leq \norm{\tilde{v}}_{W^{1,2,p}}  
+ T\norm{c}_{W^{2,p}(M)}
\\ & 
\leq C_0\norm{g-bc+\Delta_hc}_{L^p} 
+ T\norm{c}_{W^{2,p}(M)}
\\ & 
\leq C_0\left[\norm{g}_{L^p} 
+ \norm{b}_{L^p}\norm{c}_{C^0(M)} 
+ T\norm{\Delta_hc}_{L^p(M)}\right]
+ T\norm{c}_{W^{2,p}(M)},
\end{align*}
where $W^{2,p}(M)$ denotes the standard 
Sobolev space on $(M,h)$, which embeds 
into $C^{0}(M)$ (recall $p>2+d$). Therefore, for a 
constant $C=C(p,M,T)$, we infer
\begin{equation}\label{eq:Sobol-est-gen-Cauchy-prob}
\norm{v}_{W^{1,2,p}} 
\leq C\left[\norm{g}_{L^p} 
+ \norm{c}_{W^{2,p}(M)} 
\bigl(\norm{b}_{L^p}+ 1\bigr)\right].
\end{equation}
Summarizing, the general Cauchy 
problem \eqref{eq:aux-Cauchy-prob2} with 
$b,g\in C^\infty([0,t_0]\times M)$ and 
$c\in C^\infty(M)$ admits a unique 
solution $v\in C^{1,2}([0,t_0]\times M)$ 
satisfying \eqref{eq:Sobol-est-gen-Cauchy-prob}.

\medskip

\texttt{Step 3:} Well-posedness of \eqref{eq:aux-Cauchy-prob2}, 
non-smooth $b,g$.

\medskip

\noindent The aim is to prove the well-posedness 
of \eqref{eq:aux-Cauchy-prob2}---and thus 
\eqref{eq:aux-Cauchy-prob}---for irregular $b$ and $g$ in 
$L^p([0,t_0]\times M)$. Since $C^\infty([0,t_0]\times M)$ 
is dense in $L^p([0,t_0]\times M)$ \cite[Thm.~2.9]{Aubin}, 
there exist sequences $\seq{b_n}_{n\ge1}$ 
and $\seq{g_n}_{n\ge 1}$ of smooth functions such that
$$
b_n\stackrel{L^p}{\to} b, 
\qquad 
g_n\stackrel{L^p}{\to} g.
$$
From the previous step, there 
exists a unique solution 
$v_n\in W^{1,2,p}([0,t_0]\times M)$ of 
$$
\begin{cases}
\partial_t v_n 
-\Delta_hv_n + b_n(t,x) v_n
= g_n(x,t) \;\; \mbox{on $[0,t_0]\times M$},\\
v_n(0,x) = c(x) \;\; \mbox{on $M$}.
\end{cases}
$$
In view of \eqref{eq:Sobol-est-gen-Cauchy-prob}, $\seq{v_n}_{n\ge1}$ 
is bounded in $W^{1,2,p}([0,t_0]\times M)$. 
Therefore, up to a subsequence, we may assume that 
$$
\begin{cases}
v_n\rightharpoonup v\in W^{1,2,p}([0,t_0]\times M),\\
v_n\to v \in C^0([0,t_0]\times M).
\end{cases}
$$
Given these convergences, it is easy to 
conclude that $v$ solves the Cauchy problem 
\eqref{eq:aux-Cauchy-prob2} 
with $b,g\in L^p([0,t_0]\times M)$ 
and $c\in C^\infty(M)$. 

We summarize our findings so far in 

\begin{proposition}[well-posedness of parabolic Cauchy problem, 
non-smooth data]
\label{prop:well-posed-Cauchy-prob}
Suppose $b$ and $g$ belong to $L^p([0,t_0]\times M)$. 
Then there exists a unique solution 
$v\in W^{1,2,p}([0,t_0]\times M)$ to the 
Cauchy problem \eqref{eq:aux-Cauchy-prob2} with 
initial data $c\in C^\infty(M)$. Furthermore, 
the a priori estimate \eqref{eq:Sobol-est-gen-Cauchy-prob} holds.
\end{proposition}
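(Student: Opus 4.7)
The proposition essentially amounts to packaging Steps~1--3 above into a single clean statement, and the main work that remains is to (a) handle the nonzero initial datum $c\in C^\infty(M)$ and (b) justify that existence, uniqueness, and the a priori bound all survive the passage from smooth to merely $L^p$ coefficients.

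My first move would be to reduce \eqref{eq:aux-Cauchy-prob2} to the vanishing-initial-data problem \eqref{eq:aux-Cauchy-prob} by writing $v=\tilde v+c$, so that $\tilde v\in W^{1,2,p}_0([0,t_0]\times M)$ satisfies $(L+K_b)\tilde v=f$ with $f:=g-cb+\Delta_h c\in L^p([0,t_0]\times M)$, the last fact using $c\in C^\infty(M)$ together with $b,g\in L^p$. Next I would approximate $b_n\to b$ and $f_n\to f$ in $L^p$ by smooth sequences and invoke Step~1 to produce, for each $n$, a unique smooth solution $\tilde v_n$ of $(L+K_{b_n})\tilde v_n=f_n$; the energy computation of Step~1 (applied to $\psi=\tilde v_n^2/2$ together with the parabolic maximum principle) is what kills the kernel and legitimizes Fredholm's alternative at the smooth level. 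Step~2 then furnishes the uniform bound $\|\tilde v_n\|_{W^{1,2,p}}\le C(p,M,T)\bigl[\|f_n\|_{L^p}+\|c\|_{W^{2,p}(M)}(1+\|b_n\|_{L^p})\bigr]$ from \eqref{eq:Sobol-est-gen-Cauchy-prob}.

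Reflexivity of $W^{1,2,p}_0$ and the compact embedding $W^{1,2,p}_0\subset\subset C^0([0,t_0]\times M)$ of Proposition~\ref{prop:anisotropic-embed} give, along a subsequence, $\tilde v_n\rightharpoonup\tilde v$ weakly in $W^{1,2,p}_0$ and $\tilde v_n\to\tilde v$ uniformly. The passage to the limit in the linear terms $\partial_t\tilde v_n-\Delta_h\tilde v_n$ is then automatic by weak continuity; for the product $b_n\tilde v_n$, the split $b_n\tilde v_n-b\tilde v=(b_n-b)\tilde v_n+b(\tilde v_n-\tilde v)$ yields $L^p$-convergence because in each summand one factor converges strongly in the norm dual to the other's space. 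Hence $(L+K_b)\tilde v=f$ in $L^p$, proving existence. For uniqueness, I would appeal to the Fredholm alternative: the operator $L:W^{1,2,p}_0\to L^p$ is an isomorphism and $K_b$ was shown to be compact (claim preceding Step~1), so $L+K_b$ is Fredholm of index~$0$; the surjectivity just established (which works for arbitrary target by the same approximation applied with general $f\in L^p$) forces injectivity, whence uniqueness of $\tilde v$ and therefore of $v$. Finally, \eqref{eq:Sobol-est-gen-Cauchy-prob} for $v$ follows by weak lower semicontinuity of $\|\cdot\|_{W^{1,2,p}}$ applied to the uniform bound on $\tilde v_n$.

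The only delicate step is the passage to the limit in $b_n\tilde v_n$, where an $L^p$ factor meets a $W^{1,2,p}_0$ factor; this is precisely where the standing assumption $p>d+2$ enters, as it is what upgrades boundedness in $W^{1,2,p}_0$ to $C^0$-precompactness via Proposition~\ref{prop:anisotropic-embed} and thereby lets the rough coefficient be absorbed into a continuous profile. Everything else is soft functional analysis resting on Steps~1--3.
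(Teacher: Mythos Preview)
Your proof is correct and follows essentially the same route as the paper: reduce to zero initial data via $v=\tilde v+c$, approximate $b$ and the source by smooth functions, apply Steps~1--2 to the approximants, extract a weak limit in $W^{1,2,p}_0$ using the uniform bound and the compact embedding into $C^0$, and pass to the limit in the product term via the splitting $(b_n-b)\tilde v_n+b(\tilde v_n-\tilde v)$. The only substantive difference is in the uniqueness argument: the paper simply says uniqueness ``is an immediate consequence of \eqref{eq:Sobol-est-gen-Cauchy-prob}'', i.e.\ it applies the a priori estimate to the difference of two solutions, whereas you invoke the Fredholm alternative directly (index~$0$ plus surjectivity forces injectivity). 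Your argument is in fact cleaner on this point, since it does not require checking that the a priori bound \eqref{eq:Sobol-est-gen-Cauchy-prob}---which was derived in Step~2 only for smooth $b$---actually holds for \emph{every} $W^{1,2,p}$ solution with rough $b$; the Fredholm route sidesteps that verification entirely. One cosmetic slip: once you have subtracted off $c$, the bound on $\tilde v_n$ coming from Step~2 is just $\|\tilde v_n\|_{W^{1,2,p}}\le C\|f_n\|_{L^p}$ (the $\|c\|_{W^{2,p}}$ term you wrote is harmless but superfluous there).
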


\begin{proof}
The only assertion that remains to be verified 
is the one about uniqueness, but uniqueness of the 
solution is an immediate consequence 
of \eqref{eq:Sobol-est-gen-Cauchy-prob}. 
\end{proof}

\begin{remark}
The ``non-smooth" quantifier in Proposition 
\ref{prop:well-posed-Cauchy-prob} refers to 
the functions $b$ and $g$ in \eqref{eq:aux-Cauchy-prob2}. 
In upcoming applications it is essential that $b,g$ are 
allowed to be irregular (but a smooth 
initial function $c$ is fine, like $c\equiv 1$).

\end{remark}

\medskip

Let us now consider the special 
Cauchy problem
\begin{equation}\label{eq:Cauchy-prob3}
\begin{cases}
\partial_t v-\Delta_h v + b(t,x) v
= -b(x,t) \;\; \mbox{on $[0,t_0]\times M$},\\
v(0,x) = 0 \;\; \mbox{on $M$},
\end{cases}
\end{equation}
with $b\in C^\infty([0,t_0]\times M)$ and 
$b\leq 0$. This problem corresponds to 
\eqref{eq:aux-Cauchy-prob2} with a nonnegative 
smooth source $g$ (namely, $g=-b\ge 0$). 

By the previous discussion, there 
exists a unique solution $v\in C^{1,2}([0,t_0]\times M)$ 
to \eqref{eq:Cauchy-prob3}. 
Clearly, we have
$$
\partial_t v -\Delta_h v \geq - b(t,x) v,
$$
where $b\geq -C$ for some positive 
constant $C$ (since $b$ is smooth). Thanks to the maximum 
principle (\cite[Prop.~4.3]{Chow-Knopf}), 
this implies that $v\geq 0$ on $[0,t_0]\times M$.

Next, suppose that $b$ is irregular with 
$b\in L^p([0,t_0]\times M)$ ($p>d+2$) and 
$b\leq 0$ almost everywhere. Let $v\in W^{1,2,p}_0([0,t_0]\times M)$ 
be the unique solution of the Cauchy problem \eqref{eq:Cauchy-prob3}, 
as supplied by Proposition \ref{prop:well-posed-Cauchy-prob}. 
We would like to conclude that $v$ is nonnegative. 
To this end, approximate $b$ in $L^p([0,t_0]\times M)$ by 
$\seq{b_n}_{n\ge 1}\subset C^\infty([0,t_0]\times M)$ 
with $b_n\leq 0$ for all $n$, and let $v_n$ 
be the corresponding (unique) solution 
in $C^{1,2}([0,t_0]\times M)$ of 
$$
\begin{cases}
\partial_t v_n-\Delta_h v_n+b_n(t,x) v_n
= -b_n(x,t) \;\; \mbox{on $[0,t_0]\times M$},\\
v_n(0,x) = 0 \;\; \mbox{on $M$}.
\end{cases}
$$
Then $v_n\geq 0$. By the a priori estimate
\eqref{eq:Sobol-est-gen-Cauchy-prob}, which now reads 
$$
\norm{v_n}_{W^{1,2,p}} \leq C\norm{b_n}_{L^p},
$$
and the previous discussion, we infer that 
$v_n\stackrel{C^0}{\to} w$ (up to a subsequence), 
for some limit function $0\leq w\in W^{1,2,p}_0([0,t_0]\times M)$ that 
solves \eqref{eq:Cauchy-prob3} with $b\in L^p([0,t_0]\times M)$. 
By uniqueness, we conclude that $v=w\geq 0$. 

To summarize, we have proved that for 
$0\ge b\in L^p([0,t_0]\times M)$ (with $p>d+2$), 
there exists a unique solution 
$0\leq v\in W^{1,2,p}_0([0,t_0]\times M)$ 
of \eqref{eq:Cauchy-prob3}, satisfying
$$
\norm{v}_{W^{1,2,p}} \leq C\norm{b}_{L^p}.
$$

\medskip

We are now in a position to prove the 
main result of this section, namely

\begin{proposition}[test function for duality method]
\label{prop:exist-testfunc}
Suppose $b\in L^p([0,t_0]\times M)$ 
with $p>d+2$ and $b\leq 0$. Then the terminal value problem 
\begin{equation}\label{eq:terminal-value-prob-phi}
\begin{cases}
\partial_t\phi  + \Delta_h\phi  - b(t,x) \phi
= 0 \;\; \mbox{on $[0,t_0]\times M$},\\
\phi(t_0,x) = 1 \;\; \mbox{on $M$},
\end{cases}
\end{equation}
admits a unique solution $\phi \in W^{1,2,p}([0,t_0]\times M)
\cap C^0([0,t_0]\times M)$ with continuous first 
order spatial derivatives. Moreover, $\phi\geq 1$ everywhere 
and the following a priori estimates hold:
\begin{equation}\label{eq:Sobolev-est-testfunc}
\norm{\phi}_{W^{1,2,p}([0,t_0]\times M)} 
\leq T + C(p,M,T) \norm{b}_{L^p([0,t_0]\times M)}
\end{equation}
and (consequently)
\begin{equation}\label{eq:Sobolev-est-testfunc2}
\norm{\phi}_{C^0([0,t_0]\times M)}
+\norm{\nabla \phi}_{C^0([0,t_0]\times M)}
\lesssim_{d,M,p,T} 1+\norm{b}_{L^p([0,t_0]\times M)}.
\end{equation}
\end{proposition}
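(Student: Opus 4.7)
The strategy is to reduce the terminal-value problem \eqref{eq:terminal-value-prob-phi} to an instance of the Cauchy problem \eqref{eq:Cauchy-prob3}, which has already been solved in the preceding discussion. The reduction combines a time reversal with a subtraction of the constant $1$ to absorb the nonzero terminal data.

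First, I would define $\tilde b(t,x) := b(t_0 - t, x)$; this is still nonpositive and lies in $L^p([0,t_0]\times M)$ with $\|\tilde b\|_{L^p} = \|b\|_{L^p}$. Then I would set $v(t,x) := \phi(t_0 - t, x) - 1$ and verify by direct substitution (using $\partial_s\phi(s,x) = -\partial_t v(t_0-s,x)$, $\Delta_h\phi(s,x) = \Delta_h v(t_0-s,x)$) that $\phi$ solves \eqref{eq:terminal-value-prob-phi} if and only if $v$ solves
\begin{equation*}
\begin{cases}
\partial_t v - \Delta_h v + \tilde b(t,x)\, v = -\tilde b(t,x) & \text{on } [0,t_0]\times M, \\
v(0,x) = 0 & \text{on } M,
\end{cases}
\end{equation*}
which is precisely the problem \eqref{eq:Cauchy-prob3} with $b$ replaced by $\tilde b$.

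The discussion preceding the proposition then supplies a unique solution $v \in W^{1,2,p}_0([0,t_0]\times M)$ with $v \geq 0$ and $\|v\|_{W^{1,2,p}} \leq C(p,M,T)\|\tilde b\|_{L^p} = C(p,M,T)\|b\|_{L^p}$. Setting $\phi(t,x) := v(t_0 - t, x) + 1$ yields a function in $W^{1,2,p}([0,t_0]\times M)$ satisfying $\phi \geq 1$, $\phi(t_0,\cdot)\equiv 1$, and equation \eqref{eq:terminal-value-prob-phi}. Uniqueness transfers from Proposition \ref{prop:well-posed-Cauchy-prob} via the same time-reversal identification.

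For the a priori bounds, I would estimate
$$\|\phi\|_{W^{1,2,p}} \leq \|v\|_{W^{1,2,p}} + \|1\|_{W^{1,2,p}([0,t_0]\times M)} \leq C(p,M,T)\|b\|_{L^p} + t_0^{1/p}\,\mathrm{Vol}(M)^{1/p};$$
since $\mathrm{Vol}(M) = 1$ and $t_0 \leq T$, the last term is absorbed into a term of size $T$, yielding \eqref{eq:Sobolev-est-testfunc}. The pointwise bound \eqref{eq:Sobolev-est-testfunc2} is then immediate from the anisotropic embedding of Proposition \ref{prop:anisotropic-embed}, which also provides continuity of $\phi$ and $\nabla\phi$. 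I do not expect a genuine obstacle here: the real work went into Proposition \ref{prop:well-posed-Cauchy-prob} and the nonnegativity argument based on the maximum principle for \eqref{eq:Cauchy-prob3}; the present statement is essentially a dictionary-style reformulation obtained by reversing time.
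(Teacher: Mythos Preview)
Your proposal is correct and matches the paper's proof essentially line for line: the paper also sets $\phi(t,x):=1+v(t_0-t,x)$ with $\tilde b(t,x):=b(t_0-t,x)$, invokes the preceding analysis of \eqref{eq:Cauchy-prob3} for existence, uniqueness, nonnegativity, and the $W^{1,2,p}$ bound on $v$, and then reads off \eqref{eq:Sobolev-est-testfunc2} from Proposition~\ref{prop:anisotropic-embed}. Your write-up is in fact slightly more explicit about the triangle-inequality step yielding \eqref{eq:Sobolev-est-testfunc}.
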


\begin{proof}
The solution $\phi$ of \eqref{eq:terminal-value-prob-phi} 
is obtained by setting $\phi(t,x):=1 + v(t_0-t,x)$, 
where $v\in W^{1,2,p}_0([0,t_0]\times M)$ is 
the unique solution of the Cauchy problem
$$
\begin{cases}
\partial_t v - \Delta_h v + \tilde{b}(t,x) v
= -\tilde{b}(x,t) \;\; \mbox{on $[0,t_0]\times M$},
\\
v(0,x) = 0 \;\; \mbox{on $M$},
\end{cases}
$$
where $\tilde{b}(t,x):=b(t_0-t,x)$. 
Proposition \ref{prop:anisotropic-embed} 
therefore supplies the existence and uniqueness of $\phi$, 
estimate \eqref{eq:Sobolev-est-testfunc}, and also 
the lower bound $\phi \geq 1$. The final estimate 
\eqref{eq:Sobolev-est-testfunc2} follows from
the anisotropic Sobolev inequality 
(Proposition \ref{prop:anisotropic-embed}) 
and \eqref{eq:Sobolev-est-testfunc}.
\end{proof}

\begin{remark}\label{rem:non-decreasing-rhs}
Observe that the right-hand side 
of \eqref{eq:Sobolev-est-testfunc} is 
non-decreasing in $\norm{b}_{L^p}$, 
a fact that will be exploited 
in Section \ref{sec:proof-main-result}.
\end{remark}



\section{$L^2$ estimate and uniqueness for weak solutions}
\label{sec:L2-est}

The main outcome of this section is an a priori estimate 
that is valid for arbitrary weak $L^2$ solutions
of the SPDE \eqref{eq:target}, with a rough 
velocity field $u$ satisfying in particular 
$\Divh u\in L^p_{t,x}$ for some $p>d+2$. 
The proof relies fundamentally on the 
special noise vector fields $a_i$ constructed in 
Lemma \ref{lem:ellipticity}, the renormalization 
result provided by Theorem \ref{thm:main-resultGK}, and 
a duality method that makes use of the 
test function constructed in 
Proposition \ref{prop:exist-testfunc}. 

\begin{theorem}[$L^2$ estimate and uniqueness]
\label{thm:L2-est-uniq-weak}
Let $\rho$ be an arbitrary weak $L^2$ solution 
of the stochastic continuity equation \eqref{eq:target}, with 
initial datum $\rho_0\in L^2(M)$, velocity vector 
field $u$ satisfying \eqref{eq:u-ass-1}, 
\eqref{eq:u-ass-2}, and \eqref{eq:u-ass-3}, and noise vector 
fields $a_1,\ldots,a_N$ given by 
Lemma \ref{lem:ellipticity}. Then
\begin{equation}\label{eq:improved-apriori-est}
\sup_{0\leq t\leq T}\norm{\rho(t)}^2_{L^2(\Omega\times M)}
\leq  C\norm{\rho_0}^2_{L^2(M)},
\end{equation}
where $C=
C\left(d,M,p,T,a_i,\norm{\Divh u}_{L^p([0,T]\times M)},
\norm{u}_\infty\right)$ is a constant that is 
non-decreasing in $\norm{\Divh u}_{L^p([0,T]\times M)}$ 
and $\norm{u}_\infty$; here, for convenience, we have set 
$\norm{u}_\infty:=\norm{u}_{L^\infty\left([0,T];
\overrightarrow{L^\infty(M)}\right)}$.

Furthermore, weak $L^2$ solutions are 
uniquely determined by their initial data.
\end{theorem}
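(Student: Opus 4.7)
The plan is to apply the renormalized weak formulation with (an approximation of) $F(\rho)=\rho^2$, test it against a deterministic time-dependent function $\phi$ supplied by Proposition \ref{prop:exist-testfunc}, and close via Gr\"onwall's inequality in $t_0$. Since $\xi\mapsto\xi^2$ violates the boundedness hypothesis of Theorem \ref{thm:main-resultGK}, I first work with the $C^2_b$ truncation $F_R(\xi)=\xi^2$ for $|\xi|\le R$ and $F_R(\xi)=R(2|\xi|-R)$ for $|\xi|\ge R+1$, smoothly joined in between. A direct computation gives $G_{F_R}(\xi)=\xi^2$ on $|\xi|\le R$ and $G_{F_R}(\xi)=R^2$ on $|\xi|\ge R+1$, so that $0\le G_{F_R}\le F_R$ pointwise; analogously $\tfrac12 F_R''(\xi)\xi^2\le F_R(\xi)$. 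This sign/size control is the crucial feature of the truncation.

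Fix $t_0\in(0,T]$, use Lemma \ref{lem:ellipticity} in \eqref{eq:renorm-irregular-test} to rewrite $\tfrac12\sum_i a_i(a_i(\phi))=\Delta_h\phi-\tfrac12\sum_i\bar a_i(\phi)$, and isolate the coefficient of $G_{F_R}(\rho)\phi$,
$$
V(t,x) := -\Divh u(t,x)-\tfrac12\sum_{i=1}^N\Lambda_i(1)(x) \in L^p([0,T]\times M).
$$
Set $b:=-V_+\le 0$ and apply Proposition \ref{prop:exist-testfunc} to obtain $\phi\in W^{1,2,p}([0,t_0]\times M)$ solving \eqref{eq:terminal-value-prob-phi}, with $\phi\ge 1$ and $\norm{\phi}_{C^0}+\norm{\nabla\phi}_{C^0}\le C(1+\norm{\Divh u}_{L^p})$. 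Since $F_R\in C^2_b$, Lemma \ref{lemma:irregular-test-Gronwall} admits $\psi=\phi$ as test function; using $\partial_t\phi+\Delta_h\phi=b\phi$ the resulting identity can be rearranged as
$$
\EE\int_M F_R(\rho(t_0))\,dV_h = \EE\int_M F_R(\rho_0)\phi(0)\,dV_h + \EE\int_0^{t_0}\!\int_M\bigl[F_R(\rho)b+G_{F_R}(\rho)V\bigr]\phi\,dV_h\,ds + \mathcal{R}_R,
$$
where $\mathcal{R}_R$ collects the first-order-in-$\phi$ pieces (involving $u(\phi)$ and $\bar a_i(\phi)$) together with the remaining $\tfrac12 F_R''(\rho)(\rho\Divh a_i)^2\phi$ term. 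A case split on the sign of $V$, using $0\le G_{F_R}\le F_R$ and $\phi\ge 1>0$, shows $[F_R(\rho)b+G_{F_R}(\rho)V]\phi\le 0$, so this combined integral can be discarded.

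The remainder $\mathcal{R}_R$ is controlled via $|F_R(\rho)|,|G_{F_R}(\rho)|,\tfrac12 F_R''(\rho)\rho^2\le 2F_R(\rho)$, the bound \eqref{eq:u-ass-2}, smoothness of the $a_i$ on compact $M$, and the pointwise bounds on $\phi,\nabla\phi$, giving $|\mathcal{R}_R|\le C\int_0^{t_0}\EE\int_M F_R(\rho)\,dV_h\,ds$ with $C$ depending on $p,d,M,T,a_i,\norm{u}_\infty,\norm{\Divh u}_{L^p}$ but \emph{not} on $R$. Combined with $F_R(\rho_0)\phi(0)\le C\,\rho_0^2$, this yields the Gr\"onwall-ready inequality
$$
\EE\int_M F_R(\rho(t_0))\,dV_h \le C\norm{\rho_0}_{L^2(M)}^2 + C\int_0^{t_0}\EE\int_M F_R(\rho(s))\,dV_h\,ds
$$
uniformly in $R$; Gr\"onwall's lemma followed by Fatou as $R\uparrow\infty$ produces \eqref{eq:improved-apriori-est}. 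Uniqueness is then immediate from the linearity of \eqref{eq:target}: the difference of two weak $L^2$ solutions sharing an initial datum is itself a weak $L^2$ solution with zero initial datum, which must vanish by \eqref{eq:improved-apriori-est}.

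The main obstacle is the algebraic sign control underlying $[F_R(\rho)b+G_{F_R}(\rho)V]\phi\le 0$: a naive cutoff like $F_R(\xi)=\xi^2\wedge R^2$ produces $G_{F_R}(\xi)=-R^2$ outside the window and breaks this sign, whereupon the ``bad'' $G_{F_R}V$ term contributes uncontrolled positive mass before Gr\"onwall can close. The linear-growth extension $R(2|\xi|-R)$ is tailored precisely to enforce $0\le G_{F_R}\le F_R$ and $\tfrac12 F_R''\xi^2\le F_R$ everywhere, which is what makes the duality estimate uniform in $R$ and the passage to the limit legitimate; everything else (the Gr\"onwall step, and the linearity argument for uniqueness) is routine.
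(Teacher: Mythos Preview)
Your overall strategy---renormalize, test against the parabolic $\phi$ from Proposition~\ref{prop:exist-testfunc}, absorb the bad divergence term into $\partial_t\phi+\Delta_h\phi$, bound the first-order-in-$\phi$ remainder via \eqref{eq:Sobolev-est-testfunc2}, and close by Gr\"onwall---matches the paper's proof. The bundling of $-\tfrac12\sum_i\Lambda_i(1)$ into $V$ and the sign split on $V$ (using $0\le G_{F_R}\le F_R$) are a mild and legitimate variation on the paper's version, which instead keeps only $\Divh u$ in $b$ and uses $|G_{F_\mu}|\le C_\chi F_\mu$.

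There is, however, a genuine gap in your truncation. You call $F_R$ a ``$C^2_b$ truncation'', but $F_R(\xi)=R(2|\xi|-R)$ for $|\xi|\ge R+1$ grows linearly and is therefore \emph{not} bounded. Theorem~\ref{thm:main-resultGK}, and hence Lemma~\ref{lem:time-dependent-test} and Lemma~\ref{lemma:irregular-test-Gronwall}, are stated only for $F\in C^2(\R)$ with $F,F',F''\in L^\infty(\R)$; as written you cannot invoke them for your $F_R$. This is not a mere wording slip: your sign argument $[F_R(\rho)b+G_{F_R}(\rho)V]\phi\le0$ crucially uses $G_{F_R}\ge0$, and for any \emph{bounded} truncation one has $G_F(\xi)=\xi F'(\xi)-F(\xi)\to -\sup F<0$ as $|\xi|\to\infty$, so the positivity of $G_{F_R}$ is tied precisely to the unbounded linear extension.

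Two clean repairs are available. Either (i) keep your $F_R$ but first extend the renormalization formula to nonlinearities with $F',F''\in L^\infty$ and $F$ of at most linear growth (approximate $F_R$ by bounded $F_{R,m}$, apply Theorem~\ref{thm:main-resultGK}, and pass $m\to\infty$ using $\rho\in L^\infty_tL^2_{\omega,x}$ and dominated convergence in each term of \eqref{eq:renorm-irregular-test}); this is routine but needs to be said. Or (ii) follow the paper and use a bounded truncation $F_\mu(\xi)=\chi_\mu(\xi^2)$ together with the estimate $|G_{F_\mu}|\le C_\chi F_\mu$; then choose $b=-C_\chi|\Divh u|$ (or $b=-C_\chi|V|$ if you prefer), which absorbs the $G$-term via
\[
-G_{F_\mu}(\rho)\,\Divh u\,\phi \le C_\chi F_\mu(\rho)\,|\Divh u|\,\phi = -F_\mu(\rho)\,b\,\phi,
\]
without needing $G\ge0$. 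After this fix the rest of your argument (Gr\"onwall, Fatou, and linearity for uniqueness) goes through as you wrote.
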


\begin{proof}
Since $u\in L^1\left([0,T]; \overrightarrow{W^{1,2}(M)}\right)$, 
the weak solution $\rho$ is renormalizable, 
in view of Theorem \ref{thm:main-resultGK}. 
However, Theorem \ref{thm:main-resultGK} asks for
bounded nonlinearities $F$. 
To handle $F(\xi)=\xi^2$, we must 
employ an approximation (truncation) procedure. 

We pick any increasing function 
$\chi\in C^\infty\bigl([0,\infty)\bigr)$ such that
$\chi(\xi)=\xi$ for $\xi\in [0,1]$, $\chi(\xi)=2$ 
for $\xi\geq 2$, $\chi(\xi)\in [1,2]$ for $\xi\in (1,2)$, 
and $A_0:=\sup_{\xi\geq 0}\chi'(\xi)>1$. 
Set $A_1:=\sup_{\xi\geq 0} \abs{\chi''(\xi)}$. 
We define the rescaled function 
$\chi_\mu(\xi)=\mu \,\chi(\xi/\mu)$, for $\mu>0$. 
The relevant approximation of $F(\xi)=\xi^2$ is
$F_\mu(\xi):=\chi_\mu\left(\xi^2\right)$, 
for $\xi\in\R$, $\mu>0$.

Some tedious computations will reveal that
\begin{equation}\label{eq:Fmu-bounds}
\begin{split}
& F_\mu\in C^\infty(\R), 
\quad 
\lim_{\mu\to\infty} F_\mu(\xi)=\xi^2,
\quad
\sup_{\xi\in\R} F_\mu(\xi)\leq 2\mu, 
\quad \sup_{\mu>0} F_\mu(\xi)\leq 2\xi^2,
\\ & 
\sup_{\xi\in\R}\abs{F_\mu'(\xi)}\leq 2\sqrt{2}A_0\sqrt{\mu}, 
\quad 
\sup_{\mu>0} \abs{F_\mu'(\xi)}\leq 2\sqrt{2}A_0\abs{\xi},
\quad 
\lim_{\mu\to\infty} F_\mu'(\xi)=2\xi,
\\ &
\lim_{\mu\to\infty} F_\mu''(\xi)=2,
\quad
\abs{F_\mu''(\xi)}\leq 8A_1 + 2A_0. 
\end{split}
\end{equation}

Furthermore, the function 
$G_{F_\mu}(\xi)=\xi F_\mu'(\xi)-F_\mu(\xi)$ satisfies
\begin{equation*}
\begin{split}
& \sup_{\xi\in\R}\abs{G_{F_\mu}(\xi)}
\leq \left(4A_0 + 2\right)\mu, 
\quad \sup_{\mu>0}\abs{G_{F_\mu}(\xi)}
\leq 2 \left(\sqrt{2}A_0 + 1\right) \xi^2,
\\ & \qquad \text{and} \quad 
\lim_{\mu\to\infty}G_{F_\mu}(\xi)=\xi^2,
\end{split}
\end{equation*}
and the following estimate:
\begin{equation}\label{eq:needed-for-apriori-est}
\abs{G_{F_\mu}(\xi)} 
\leq C_{\chi}F_\mu(\xi), 
\quad \abs{\xi^2F_\mu''(\xi)}\leq C_\chi
\begin{cases}
F_\mu(\xi), 
& \text{for $\abs{\xi}\leq \sqrt{\mu}$}, 
\\ \xi^2, & 
\text{for $\abs{\xi} \in \left[\sqrt{\mu},\sqrt{2\mu}\right]$}, 
\\ O(\mu), & \text{for $\abs{\xi} > \sqrt{2\mu}$},
\end{cases}
\end{equation}
for some constant $C_\chi>0$ independent of $\mu$.

Fix $t_0\in (0,T]$ and consider 
\eqref{eq:renorm-irregular-test} evaluated 
at $t=t_0$ and with $F=F_\mu$. 
Then, in view of the choice of noise vector fields 
$a_i$ (see Lemma \ref{lem:ellipticity}), the 
following equation holds for any 
$\psi\in W^{1,2,p}([0,t_0]\times M)$ 
(as long as$p>d+2$):
\begin{equation}\label{eq:Gr1}
\begin{split}
&\EE\int_M F_\mu(\rho(t_0))\psi(t_0) \, dV_h 
- \EE\int_M F_\mu(\rho_0)\psi(0)\, dV_h 
\\ &\quad 
=\EE\int_0^{t_0}\int_M F_\mu(\rho(s))\partial_t\psi\, dV_h\,ds 
+\EE\int_0^{t_0}\int_M F_\mu(\rho(s))\,u(\psi)\,dV_h\,ds
\\ & \quad \qquad 
+ \EE\int_0^{t_0} \int_M 
F_\mu(\rho(s))\, \Delta_h\psi \, dV_h \,ds
- \frac12 \sum_{i=1}^N \EE\int_0^{t_0} \int_M 
F_\mu(\rho(s))\, \bar{a}_i(\psi) \, dV_h \,ds
\\ & \quad \qquad 
-\EE\int_0^{t_0}\int_M G_{F_\mu}(\rho(s))\Divh u \,\psi\, dV_h\, ds
\\ & \quad \qquad 
-\frac12 \sum_{i=1}^N\EE\int_0^{t_0}\int_M 
\Lambda_i(1)\,G_{F_\mu}(\rho(s))\,\psi\,dV_h\,ds
\\ & \quad \qquad 
+\frac12\sum_{i=1}^N\EE\int_0^{t_0}\int_M 
F_\mu''(\rho(s))\bigl(\rho(s)\Divh a_i\bigr)^2\,\psi\,dV_h\,ds
\\ & \quad \qquad
-\sum_{i=1}^N\EE\int_0^{t_0}\int_M G_{F_\mu}(\rho(s)) 
\bar{a}_i(\psi)\,dV_h \,ds,
\end{split}
\end{equation}
where we have applied Theorem \ref{thm:main-resultGK} 
to \eqref{eq:target-Ito-special-noise} and the  
time-space weak formulation with non-smooth 
test functions $\psi(t,x)$, see Proposition 
\ref{lemma:irregular-test-Gronwall}. 
Let $\phi$ be the unique solution of 
\eqref{eq:terminal-value-prob-phi} with 
$b= -C_\chi\abs{\Divh u}$, where $C_\chi>0$ is the 
constant appearing in \eqref{eq:needed-for-apriori-est}. 
The existence of $\phi$ is guaranteed by 
Proposition \ref{prop:exist-testfunc}. Moreover, 
$\phi$ belongs to $W^{1,2,p}([0,t_0]\times M)
\cap C^0([0,t_0]\times M)$, the estimates 
\eqref{eq:Sobolev-est-testfunc} and 
\eqref{eq:Sobolev-est-testfunc2} hold, and $\phi$ is 
lower bounded by $1$ everywhere in $[0,t_0]\times M$. 
Thanks to Proposition \ref{lemma:irregular-test-Gronwall}, 
we can use $\phi$ as test function in \eqref{eq:Gr1}. 

Making use of \eqref{eq:needed-for-apriori-est}, we obtain
\begin{align*}
& -\EE\int_0^{t_0}\int_M G_{F_\mu}(\rho(s))\Divh u 
\,\phi\, dV_h\, ds 
\leq \EE\int_0^{t_0}\int_M \abs{G_{F_\mu}(\rho(s))}
\abs{\Divh u} \,\phi\, dV_h\, ds 
\\ & \qquad \leq 
\EE\int_0^{t_0}\int_M C_\chi F_\mu(\rho(s))
\abs{\Divh u} \,\phi\, dV_h\, ds
= -\EE\int_0^{t_0}\int_M F_\mu(\rho(s))\, b 
\,\phi\, dV_h\, ds.
\end{align*}
Now, recalling that the test function $\phi$ 
is the unique solution of the PDE 
problem \eqref{eq:terminal-value-prob-phi}, 
the inequality \eqref{eq:Gr1} 
(with $\psi = \phi$) simplifies to
\begin{equation}\label{eq:Gr2}
\begin{split}
&\EE\int_M F_\mu(\rho(t_0))\phi(t_0) \, dV_h 
\leq \EE\int_M F_\mu(\rho_0)\phi(0)\, dV_h  
\\ &\qquad 
+\EE\int_0^{t_0}\int_M F_\mu(\rho(s))\,u(\phi)\,dV_h\,ds
- \frac12 \sum_{i=1}^N \EE\int_0^{t_0} \int_M 
F_\mu(\rho(s))\, \bar{a}_i(\phi) \, dV_h \,ds
\\ & \qquad \quad
-\frac12 \sum_{i=1}^N\EE\int_0^{t_0}\int_M 
\Lambda_i(1)\,G_{F_\mu}(\rho(s))\,\phi\,dV_h\,ds
\\ & \qquad \quad \quad
+\frac12\sum_{i=1}^N\EE\int_0^{t_0}\int_M 
F_\mu''(\rho(s))\bigl(\rho(s)\Divh a_i\bigr)^2\,\phi\,dV_h\,ds
\\ & \qquad\quad\quad\quad 
-\sum_{i=1}^N\EE\int_0^{t_0}\int_M G_{F_\mu}(\rho(s)) 
\bar{a}_i(\phi)\,dV_h \,ds.
\end{split}
\end{equation}

Using the fourth property in \eqref{eq:Fmu-bounds} 
and the estimate \eqref{eq:Sobolev-est-testfunc2} 
satisfied by the solution $\phi$ 
of \eqref{eq:terminal-value-prob-phi} 
with $b= -C_\chi\abs{\Divh u}$, we obtain
\begin{align*}
- \frac12 \sum_{i=1}^N & \EE\int_0^{t_0}
\int_M F_\mu(\rho(s))\, \bar{a}_i(\phi) \, dV_h \,ds 
\\ & \leq C(a_i) \norm{\nabla\phi}_{C^0([0,t_0]\times M)}
\EE\int_0^{t_0} \int_M \rho^2(s) \, dV_h \,ds
\\ & \leq C\left(a_i,\norm{\Divh u}_{L^p([0,T]\times M)}\right) 
\EE\int_0^{t_0} \int_M \rho^2(s) \, dV_h \,ds
\\ & \leq  C\left(a_i,\norm{\Divh u}_{L^p([0,T]\times M)}\right)
\EE\int_0^{t_0} \int_M \rho^2(s)\phi(s) \, dV_h \,ds,
\end{align*}
where we have also exploited that $\phi\ge 1$. 
Observe that the constant $C$ is non-decreasing in 
$\norm{\Divh u}_{L^p([0,T]\times M)}$, 
see Remark \ref{rem:non-decreasing-rhs}, and 
we do not write its dependency on the $d,M,p,T$. 
Similarly, using also 
\eqref{eq:needed-for-apriori-est}, we have
$$
-\sum_{i=1}^N \EE\int_0^{t_0}\int_M G_{F_\mu}(\rho(s)) 
\bar{a}_i(\phi) \,dV_h \,ds 
\leq C\, \EE\int_0^{t_0} \int_M \rho^2(s) \phi(s) 
\, dV_h \,ds,
$$
for a possibly different constant 
$C=C\Big(a_i,\norm{\Divh u}_{L^p([0,T]\times M)}\Big)$, 
still non-decreasing in $\norm{\Divh u}_{L^p([0,T]\times M)}$. 
Similar bounds can be derived for the terms on 
the third and fourth lines of \eqref{eq:Gr2}: 
\begin{align*}
&\frac12\sum_{i=1}^N\EE\int_0^{t_0}\int_M 
F_\mu''(\rho(s))\bigl(\rho(s)\Divh a_i\bigr)^2
\,\phi(s)\,dV_h\,ds
\le C\, \EE\int_0^{t_0} \int_M \rho^2(s) \phi \, dV_h \,ds,
\\ & -\frac12 \sum_{i=1}^N
\EE\int_0^{t_0}\int_M \Lambda_i(1)\,G_{F_\mu}(\rho(s))
\,\phi\,dV_h\,ds
\le C\, \EE\int_0^{t_0} \int_M \rho^2(s)
\phi(s) \, dV_h \,ds.
\end{align*}

Therefore \eqref{eq:Gr2} becomes
\begin{equation*}
\begin{split}
& \EE\int_M F_\mu(\rho(t_0))\phi(t_0) \, dV_h 
\leq \EE\int_M F_\mu(\rho_0)\phi(0)\, dV_h  
\\ & \qquad +\EE\int_0^{t_0}\int_M F_\mu(\rho(s)) 
\,u(\phi)\,dV_h\,ds \\ &\qquad\qquad 
+C\left(a_i,\norm{\Divh u}_{L^p([0,T]\times M)}\right)
\EE\int_0^{t_0} \int_M \rho^2(s)\phi(s) \, dV_h \,ds.
\end{split}
\end{equation*}

Arguing as above, since 
$u\in L^\infty([0,T];\overrightarrow{L^\infty(M)})$, 
\begin{align*}
& \EE\int_0^{t_0}\int_M F_\mu(\rho(s))\,u(\phi)\,dV_h\,ds
\\ & \qquad 
\leq C\left(a_i,\norm{\Divh u}_{L^p([0,T]\times M)},
\norm{u}_\infty\right) 
\EE\int_0^{t_0} \int_M \rho^2(s)\phi(s) 
\, dV_h \,ds,
\end{align*}
where the constant $C$ is non-decreasing 
in $\norm{u}_\infty$ as well.

In conclusion, we have obtained
\begin{equation}\label{eq:Gr4}
\begin{split}
&\EE\int_M F_\mu(\rho(t_0))\phi(t_0) \, dV_h 
\\ & \qquad 
\leq \EE\int_M F_\mu(\rho_0)\phi(0)\, dV_h 
+ C\, \EE\int_0^{t_0} \int_M \rho^2(s)\phi(s) 
\, dV_h \,ds,
\end{split}
\end{equation}
where $C$ depends in particular on $a_1,\ldots,a_N$, 
$\norm{u}_\infty$, and $\norm{\Divh u}_{L^p([0,T]\times M)}$ 
but not on $\mu$; $C$ is non-decreasing in $\norm{u}_\infty$ and 
$\norm{\Divh u}_{L^p([0,T]\times M)}$.   

By the dominated convergence theorem ($\phi$ is 
continuous, $\rho_0\in L^2(M)$), we obtain 
$\EE\int_M F_\mu(\rho_0)\phi(0)\, dV_h\to 
\int_M \rho_0^2\,\phi(0)\, dV_h$ as $\mu\to\infty$. 
On the other hand, by Fatou's lemma, we 
can send $\mu\to\infty$ in the term on the left-hand 
side of \eqref{eq:Gr4}, arriving at
\begin{equation}\label{eq:Gr5}
\EE\int_M \rho^2(t_0)\phi(t_0) \, dV_h 
\leq \int_M \rho_0^2\,\phi(0)\, dV_h 
+ C\, \EE\int_0^{t_0} \int_M 
\rho^2(s)\phi(s) \, dV_h \,ds.
\end{equation}
Since $\phi$ is lower bounded by $1$, we can replace the 
term on the left-hand side by 
$\EE\int_M \rho^2(t_0) \, dV_h$. On the other hand, 
in view of \eqref{eq:Sobolev-est-testfunc2}, we 
can bound (remove) the $\phi$ part from the terms 
on the right-hand side of \eqref{eq:Gr5} by 
$\norm{\phi}_{C^0([0,t_0]\times M)} 
\lesssim 1 + \norm{\Divh u}_{L^p([0,T]\times M)}$,
where $\lesssim$ does not depend on $t_0$. 
As a result, \eqref{eq:Gr5} becomes
$$
\EE\int_M \rho^2(t_0) \, dV_h 
\leq K\int_M \rho_0^2\, dV_h 
+  K\, \EE\int_0^{t_0} \int_M \rho^2(s) \, dV_h \,ds,
$$
where $K$ depends in particular on $a_1,\ldots,a_N$, 
$\norm{u}_\infty$, and $\norm{\Divh u}_{L^p([0,T]\times M)}$, 
still non-decreasing in $\norm{u}_\infty$ and 
$\norm{\Divh u}_{L^p([0,T]\times M)}$. 

Setting $\Phi(t_0):= \EE\int_M \rho^2(t_0)\, dV_h\in [0,\infty)$ 
and $\Phi(0):=C\int_M \rho_0^2\, dV_h\in [0,\infty)$, 
the last inequality reads as
$$
\Phi(t_0)\leq \Phi(0)
+K\int_0^{t_0} \Phi(s) \,ds, 
\qquad 0< t_0\leq T.
$$
The integrability properties of weak solutions 
implies $\Phi\in L^1([0,t_0])$ for any $t_0\leq T$. 
Hence, by Gr\"onwall's inequality,
$$
\Phi(t) \leq \Phi(0) e^{Kt}, 
\qquad t\in [0,T].
$$
This concludes the proof 
of \eqref{eq:improved-apriori-est}, which 
also implies the uniqueness assertion.
\end{proof}

\begin{remark}
Regarding the uniqueness assertion 
in Proposition \ref{thm:L2-est-uniq-weak}, we mention 
that it is possible to prove uniqueness without an 
additional assumption on $\Divh u$. This follows from 
the renormalized formulation \eqref{eq:SPDE-renorm} 
with $F(\cdot)=\abs{\cdot}$, modulo an approximation argument.
Since the existence of weak solutions (which asks 
that $\Divh u\in L^p$) holds in the $L^2$ setting, 
we have chosen not to focus on $L^1$ uniqueness.
\end{remark}


\section{Proof of main result, Theorem \ref{thm:main-result}}
\label{sec:proof-main-result}

We divide the proof of Theorem \ref{thm:main-result}
into four parts (subsections), starting with the procedure
for smoothing the irregular velocity vector field $u$, yielding 
$u_\tau\in C^\infty$ such that $u_\tau\approx u$ 
for $\tau>0$ small. In the second subsection we rely 
on the $L^2$ estimate in Proposition \ref{thm:L2-est-uniq-weak} 
to ensure weak compactness of a sequence $\seq{\rho^\tau}_{\tau>0}$ 
of approximate solutions, obtained by solving the 
the SPDE \eqref{eq:target} with smooth initial datum $\rho_0$ 
and smooth velocity field $u_\tau$. The limit of a 
weakly converging subsequence is easily 
shown to be a solution of the SPDE. 
In the third subsection we remove 
the assumption that $\rho_0$ is smooth. 
Finally, we prove a technical lemma utilized 
in the second subsection.

\subsection{Smoothing of velocity vector field $u$}

We extend the vector field $u$ outside of $[0,T]$ 
by setting $u(t,\cdot)\equiv 0$ for $t<0$ and $t>T$, yielding 
$u\in L^\infty\left(\R;\overrightarrow{L^\infty(M)}\right)$. 

Let $\seq{\mathcal{E}_{\tau}}_{\tau\geq 0}$ denote the 
de Rham-Hodge semigroup on 1-forms, associated 
to the de Rham-Hodge Laplacian on $(M,h)$. 
We refer to Section \ref{sec:appendix} for 
a collection of properties of the 
heat kernel on forms. 

For a.e.~$t\in\R$ and all $\tau>0$, 
$\mathcal{E}_\tau u(t)$ is a smooth vector 
field on $M$ and 
$$
\norm{\mathcal{E}_\tau u(t)}_{\overrightarrow{L^\infty(M)}}
\leq e^{\varepsilon^2\tau}
\norm{u(t)}_{\overrightarrow{L^\infty(M)}},
$$
where $\varepsilon$ is a constant such that 
$\operatorname{Ric}_M\geq -\varepsilon^2 h$. 
By assumption, we clearly have  
$u(t)\in \overrightarrow{L^r(M)}$ 
for a.e.~$t$ and thus
$$
\mathcal{E}_\tau u(t)\totau u(t)
\quad \text{in $\overrightarrow{L^r(M)}$}, 
\qquad r\in [1,\infty),
$$
where the null-set is $r$-independent.

Let $\eta$ be a standard mollifier on $\R$, and set
$$
\eta_\tau(t):=\tau^{-1}\eta\left(\frac{t}{\tau}\right), 
\quad t\in\R.
$$
We now define the following vector field:
$$
u_\tau(t,x):= \int_\R \mathcal{E}_\tau 
u(t',x) \eta_\tau(t-t')\,dt'\in T_xM,
$$
which is well-defined because
\begin{align*} 
\abs{u_\tau(t,x)}_h
& \leq \int_\R \abs{\mathcal{E}_\tau u(t',x)}_h
\eta_\tau(t-t')\,dt' 
\\ & \leq \int_\R
\norm{\mathcal{E}_\tau u(t)}_{\overrightarrow{L^\infty(M)}}
\eta_\tau(t-t')\,dt'
\\ & \leq e^{\varepsilon^2\tau}
\int_\R \norm{u(t')}_{\overrightarrow{L^\infty(M)}}
\eta_\tau(t-t') \,dt'<\infty,
\end{align*}
for any $t\in\R$ and $x\in M$. 
Clearly, $u_\tau:\R\times M\to TM$ is 
smooth in both variables, 
\begin{equation}\label{eq:Linf-utau}
\norm{u_\tau}_{L^\infty\left(\R;\overrightarrow{L^\infty(M)}\right)}
\leq e^{\varepsilon^2\tau} 
\norm{u}_{L^\infty\left(\R;\overrightarrow{L^\infty(M)}\right)},
\end{equation}
and $\supp u_\tau\subset [-1,T+1]\times M$ 
for all $\tau \ll 1$.

For a.e.~$t\in\R$,
\begin{equation}\label{eq:div-commute-semigroup}
\Divh \left(\mathcal{E}_\tau u(t,x)\right) 
= P_\tau\Divh u(t,x), \quad x\in M,
\end{equation}
where $P_\tau$ is the heat kernel on functions. 
Indeed, fixing $\phi\in C^1(M)$, 
we compute (see \cite[eq.~4.3]{Fang-Li-Luo})
\begin{align*}
& \int_M \Divh \left(\mathcal{E}_\tau u(t,x)\right) \phi\,dV_h 
= -\int_M \bigl(\mathcal{E}_\tau u(t,x),\nabla \phi\bigr)_h\,dV_h 
\\ & \qquad
= -\int_M \bigl(u(t,x),\mathcal{E}_\tau\nabla \phi\bigr)_h \,dV_h
= -\int_M \bigl(u(t,x),\nabla P_\tau \phi\bigr)_h\,dV_h
\\ & \qquad = \int_M \Divh u(t,x)P_\tau \phi\,dV_h
= \int_MP_\tau \Divh u(t,x) \phi\,dV_h,
\end{align*}
where we have used the relation \cite{Fang-Li-Luo}
$$
\mathcal{E}_\tau\nabla \phi 
= \nabla P_\tau \phi, 
\quad \phi \in  C^1(M),
$$
and so the identity 
\eqref{eq:div-commute-semigroup} follows.

The next lemma expresses 
$\Divh u_\tau$ in terms of $\Divh u$.

\begin{lemma}[formula for $\Divh u_\tau$]\label{lem:div-utau}
For any $t\in\R$ and $x\in M$,
$$
\Divh u_\tau(t,x):= 
\int_\R \Divh \left(\mathcal{E}_\tau u(t',x)\right)
\eta_\tau(t-t')\,dt',
$$
where $\Divh \left(\mathcal{E}_\tau u(t',x)\right)$ 
can be computed in terms of $\Divh u$ and the heat kernel on 
functions, see \eqref{eq:div-commute-semigroup}.
\end{lemma}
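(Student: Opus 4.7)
The plan is to verify the claimed identity by moving the divergence operator, which acts in $x$ only, under the $t'$-integral in the definition of $u_\tau$, and then to read off the second assertion from the already-established commutation rule \eqref{eq:div-commute-semigroup}. The only real work is to justify the interchange.

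First I would record that for each fixed $t'\in \R$, $\mathcal{E}_\tau u(t',\cdot)$ is a smooth vector field on $M$ (parabolic regularization of the de Rham--Hodge semigroup applied to the 1-form associated with $u(t',\cdot)$), and that the bound $\norm{\mathcal{E}_\tau u(t',\cdot)}_{\overrightarrow{L^\infty(M)}}\le e^{\varepsilon^2\tau}\norm{u}_{L^\infty_t\overrightarrow{L^\infty_x}}$ together with the compact support of $\eta_\tau$ makes the integrand in the definition of $u_\tau(t,x)$ uniformly bounded on a compact $t'$-interval. Standard estimates on the heat kernel (see Section \ref{sec:appendix}) give the same kind of uniform bound for every $x$-derivative of $\mathcal{E}_\tau u(t',x)$ up to any order, so $x\mapsto u_\tau(t,x)$ is $C^\infty$ and we may differentiate under the integral sign in $x$.

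To avoid chasing Christoffel symbols, I would test against an arbitrary $\phi\in C^\infty(M)$ and use integration by parts on the closed manifold $M$:
\begin{align*}
-\int_M \Divh u_\tau(t,x)\,\phi(x)\,dV_h
&= \int_M \bigl(u_\tau(t,x),\nabla\phi(x)\bigr)_h\,dV_h \\
&= \int_M\!\!\int_\R \bigl(\mathcal{E}_\tau u(t',x),\nabla\phi(x)\bigr)_h\,\eta_\tau(t-t')\,dt'\,dV_h.
\end{align*}
The bounds above make Fubini's theorem applicable, so the two integrals can be swapped. Applying integration by parts on $M$ once more inside the $t'$-integral gives
\begin{equation*}
-\int_M \Divh u_\tau(t,x)\,\phi(x)\,dV_h
= -\int_M\left(\int_\R \Divh\!\bigl(\mathcal{E}_\tau u(t',\cdot)\bigr)(x)\,\eta_\tau(t-t')\,dt'\right)\phi(x)\,dV_h.
\end{equation*}
Since both sides are continuous in $x$ and $\phi$ is arbitrary, the pointwise identity follows. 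Combining this with \eqref{eq:div-commute-semigroup} rewrites the inner divergence as $P_\tau\Divh u(t',x)$, which is the second assertion.

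The main (and mild) obstacle is the interchange of $\Divh$ and the $t'$-integration; it is handled by the uniform-in-$t'$ smoothness bounds on $\mathcal{E}_\tau u(t',\cdot)$. I would not attempt a direct calculation in local coordinates, since using integration by parts against a test function makes Fubini do all the work and avoids book-keeping with Christoffel symbols.
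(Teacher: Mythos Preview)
Your argument is correct; the interchange of $\Divh$ with the $t'$-integral is the whole point, and your duality/Fubini route establishes it cleanly. The paper proceeds differently: it works in local coordinates, writes out $(\mathcal{E}_\tau u(t',x))^k$ via the de Rham--Hodge kernel $e(\tau,x,y)_{ij}$, bounds $\bigl|\partial_k(\mathcal{E}_\tau u(t',x))^k\bigr|\le C(M,\tau)\norm{u}_{\overrightarrow{L^\infty(M)}}$ directly from the kernel representation, and then passes $\partial_k$ through $\int_\R$ by dominated convergence, assembling the local pieces into $\Divh$. Your approach trades that coordinate computation for two integrations by parts on $M$ and a Fubini step that only needs the $L^\infty$ bound on $\mathcal{E}_\tau u$ (not its derivatives); the derivative bounds you invoke are then used only to certify smoothness of $u_\tau$ and continuity of both sides at the end, which is a cleaner division of labor. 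The paper's route is more hands-on and avoids the final ``continuous on both sides, test function arbitrary'' step, but at the cost of tracking Christoffel symbols and the kernel $e(\tau,x,y)_{ij}$ explicitly. Both arguments rely on exactly the same analytic input (uniform-in-$t'$ smoothing of $\mathcal{E}_\tau$ at fixed $\tau>0$, compact support of $\eta_\tau$), so neither is more general; yours is simply more coordinate-free.
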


\begin{proof}
Locally expressing $\mathcal{E}_\tau u(t,x)$ as
$$
\mathcal{E}_\tau u(t,x)=
\left(\int_Me(\tau,x,y)_{ij}u^j(t,y)\,dV_h(y)
\right)h^{ik}(x)\, \partial_k,
$$
for a.e.~$t\in\R$, see Section \ref{sec:appendix}, 
we obtain (temporarily dropping Einstein's summation 
convention for $k$)
\begin{align*}
\partial_k\left(\mathcal{E}_\tau u(t,x)\right)^k 
&=\int_M\partial_ke \left(\tau,x,y\right)_{ij}u^j(t,y)
\,dV_h(y)\, h^{ik}(x)
\\ & \qquad
+\int_M e(\tau,x,y)_{ij}u^j(t,y)\,dV_h(y)\,
\partial_kh^{ik}(x),
\end{align*}
and thus 
$$
\abs{\partial_k \left(\mathcal{E}_\tau u(t,x)\right)^k}
\leq C(M,\tau) 
\norm{u}_{\overrightarrow{L^\infty(M)}}.
$$
Therefore we are allowed to interchange 
$\int_\R$ and $\partial_k$ to obtain
$$
\partial_k \left(u_\tau(t,x)\right)^k
= \int_\R \partial_k\left(\mathcal{E}_\tau u(t',x)\right)^k
\eta_\tau(t-t')\,dt'.
$$
From here, recalling the local 
expression for $\Divh$ (see Section \ref{sec:background}), 
it is now immediate to conclude that locally
$$
\Divh u_\tau(t,x)= \int_\R 
\Divh \left(\mathcal{E}_\tau u(t',x)\right)
\eta_\tau(t-t')\,dt'.
$$
\end{proof}

Fix $x\in M$. In view of Lemma \ref{lem:div-utau} 
and basic convolution estimates on $\R$, 
$\norm{\Divh u_\tau(\cdot,x)}_{L^p(\R)}
\leq \norm{\Divh \mathcal{E}_\tau 
u(\cdot,x)}_{L^p(\R)}$ 
for any $\tau>0$, and thus
$$
\norm{\Divh u_\tau}_{L^p(\R\times M)}\leq 
\norm{\Divh \mathcal{E}_\tau u}_{L^p(\R\times M)}.
$$
As a result, via \eqref{eq:div-commute-semigroup}, we obtain
\begin{equation}\label{eq:Lp-bound-divutau}
\begin{split}
\norm{\Divh u_\tau}_{L^p(\R\times M)}
& \leq \norm{P_\tau \Divh u}_{L^p(\R\times M)}
\\ & \leq \norm{\Divh u}_{L^p(\R\times M)}
=\norm{\Divh u}_{L^p([0,T]\times M)}.
\end{split}
\end{equation}

\subsection{Weak compactness of approximate solutions}

Let $\rho^\tau$ be the unique weak $L^2$ 
solution of the SPDE \eqref{eq:target} with 
initial datum $\rho_0\in C^\infty(M)$, noise vector 
fields $a_i$ given by Lemma \ref{lem:ellipticity}, and 
irregular velocity field $u$ (satisfying the assumptions 
of Theorem \ref{thm:main-result}) replaced 
by the smooth vector field $u_\tau$. 

We refer to Propositions \ref{prop:strongsol-is-weaksol} 
and \ref{thm:L2-est-uniq-weak} for the existence, 
uniqueness, and properties of the solution, 
which satisfies the It\^{o} SPDE
\begin{align*}
& d\rho^\tau + \Divh \bigl (\rho^\tau u_\tau \bigr) \,dt 
+ \sum_{i=1}^N \Divh \bigl( \rho^\tau a_i \bigr)\,dW^i(t) 
\\ & \qquad \qquad
- \Delta_h \rho^\tau\, dt -\frac{1}{2}\sum_{i=1}^N 
\Divh \bigl(\rho^\tau \bar{a}_i \bigr)\,dt=0 
\quad \text{weakly in $x$, $\P$-a.s.},
\end{align*}
that is, for any $\psi\in C^\infty(M)$, 
the following equation holds $\P$-a.s.:
\begin{equation}\label{eq:L2-weak-sol-Ito-rho-tau}
\begin{split}
&\int_M \rho^\tau(t)\psi\, dV_h 
= \int_M \rho_0\psi\, dV_h 
+\int_0^t\int_M\rho^\tau(s)\,u_\tau(\psi) \,dV_h\,ds
\\ & \qquad 
+\sum_{i=1}^N\int_0^t\int_M \rho^\tau(s) 
\,a_i(\psi) \, dV_h\, dW^i(s)
+\int_0^t \int_M \rho^\tau(s) 
\,\Delta_h\psi \, dV_h \,ds
\\ & \qquad \qquad
-\frac12\sum_{i=1}^N \int_0^t 
\int_M \rho^\tau(s) \,\bar{a}_i(\psi) \, dV_h \,ds, 
\qquad t\in [0,T].
\end{split}
\end{equation}

In view of \eqref{eq:Linf-utau}, \eqref{eq:Lp-bound-divutau}, 
and \eqref{eq:improved-apriori-est}, recalling 
the ``monotonicity properties" of the constant $C$, 
we obtain the $\tau$-independent $L^2$ estimate
\begin{equation*}
\sup_{0\leq t\leq T}
\norm{\rho^\tau(t)}_{L^2(\Omega\times M)}
\leq  C\left(T,a_i,\norm{\Divh u}_{L^p([0,T]\times M)},
\norm{u}_\infty\right)\norm{\rho_0}_{L^2(M)}.
\end{equation*}
In other words, $\seq{\rho^\tau}_{\tau\in (0,1)}$ is bounded in 
$L^\infty\left([0,T];L^2(\Omega\times M)\right)$. 

Since $\left(L^2(\Omega\times M)\right)^\star$ is 
separable and $\left([0,T],dt\right)$ is a 
finite measure space, we know 
that $L^\infty\left([0,T]; L^2(\Omega\times M)\right)$ 
is the dual of $L^1\left([0,T]; L^2(\Omega\times M)\right)$. 
Therefore, there exist $\seq{\tau_n}_{n\ge 1}
\subset (0,1)$ with $\tau_n\downarrow 0$ and 
$\rho\in L^\infty\left([0,T];L^2(\Omega\times M)\right)$ 
such that
$$
\rho^{\tau_n}\stackrel{\star}{\rightharpoonup} \rho \quad 
\text{in $L^\infty\left([0,T];L^2(\Omega\times M)\right)$},
$$
as $n\to \infty$, which means that
$$
\int_0^T\int_{\Omega}\int_M
\left(\rho^{\tau_n}-\rho\right)
\,\theta\,\, \P\otimes dV_h\otimes dt\ton 0, 
\quad \forall 
\theta \in L^1\left([0,T]; L^2(\Omega\times M)\right).
$$

We follow the arguments in \cite{Pardoux}. 
Fix $\phi\in C^\infty(M)$. The process 
$\int_M\rho^{\tau_n}(t)\phi\,dV_h$ 
is adapted by definition and converges 
weakly in $L^2(\Omega_T)$ to the 
process $\int_M\rho(t)\phi\,dV_h$. 
Since the space of adapted processes is 
a closed subspace of $L^2(\Omega_T)$, it 
is weakly closed, and hence the 
limit process is adapted. 

For the same reason, the processes 
$\int_M\rho^{\tau_n}(t)a_i(\phi)\,dV_h$, $i=1,\ldots,N$, 
are adapted and their It\^o integrals 
are well defined. Since the It\^o 
integral is linear and continuous from the space 
of adapted $L^2(\Omega_T)$ processes to $L^2(\Omega_T)$, 
it is also weakly continuous. As a result,
$$
\int_0^\cdot\int_M\rho^{\tau_n}(s)a_i(\phi)
\,dV_h\, dW^i_s
\weakn
\int_0^\cdot\int_M\rho(s) a_i(\phi) \,dV_h\, dW^i_s 
\quad \text{in $L^2(\Omega_T)$}.
$$   

Exploiting the weak continuity of 
the time-integrals,
$$
\int_0^\cdot \int_M \rho^{\tau_n}(s) 
\Delta_h\phi \, dV_h \,ds
\weakn
\int_0^\cdot \int_M \rho(s) 
\Delta_h\phi \, dV_h \,ds
\quad \text{in $L^2(\Omega_T)$}
$$
and, for $i=1,\ldots, N$, 
$$
\int_0^\cdot \int_M \rho^{\tau_n}(s) 
\bar{a}_i(\phi) \, dV_h \,ds
\weakn
\int_0^\cdot \int_M \rho(s) 
\bar{a}_i(\phi) \, dV_h \,ds
\quad \text{in $L^2(\Omega_T)$}.
$$

It remains to pass to the limit in the 
term involving the velocity field $u_\tau$ 
in \eqref{eq:L2-weak-sol-Ito-rho-tau}. 
The proof of the next lemma 
is postponed to the end of this section.
\begin{lemma}\label{lem:conv-utau}
For any $r\in [1,\infty)$, $u_\tau\to u$ 
in $L^r\left(\R;\overrightarrow{L^r(M)}\right)$ 
as $\tau\downarrow 0$.
\end{lemma}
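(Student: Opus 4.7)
The strategy is a two-step triangle-inequality argument that decouples the time-mollification from the spatial heat-semigroup smoothing. First, I would observe that after extending $u(t,\cdot)\equiv 0$ outside $[0,T]$, the field $u$ is compactly supported in $t$ and satisfies $u\in L^\infty(\R;\overrightarrow{L^\infty(M)})$; since $M$ is compact with $\mathrm{Vol}(M,h)=1$, this automatically forces $u\in L^r(\R;\overrightarrow{L^r(M)})$ for every $r\in [1,\infty)$.

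Next, using that $u_\tau=\eta_\tau\ast_t(\mathcal{E}_\tau u)$, I would split
\begin{equation*}
u_\tau-u=\eta_\tau\ast_t(\mathcal{E}_\tau u-u)+(\eta_\tau\ast_t u-u).
\end{equation*}
The second summand tends to zero in $L^r(\R;\overrightarrow{L^r(M)})$ as $\tau\downarrow 0$ by a standard time-mollification argument (apply the classical Euclidean mollification convergence componentwise in a finite atlas, after checking that the components of $u$ lie in $L^r$ in local charts). For the first summand, Young's convolution inequality in the time variable gives
\begin{equation*}
\norm{\eta_\tau\ast_t(\mathcal{E}_\tau u-u)}_{L^r(\R;\overrightarrow{L^r(M)})}
\le \norm{\eta_\tau}_{L^1(\R)}\,
\norm{\mathcal{E}_\tau u-u}_{L^r(\R;\overrightarrow{L^r(M)})}
=\norm{\mathcal{E}_\tau u-u}_{L^r(\R;\overrightarrow{L^r(M)})},
\end{equation*}
so the task reduces to showing $\mathcal{E}_\tau u\to u$ in $L^r(\R;\overrightarrow{L^r(M)})$.

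For this last step I would invoke the pointwise-in-$t$ convergence $\mathcal{E}_\tau u(t)\to u(t)$ in $\overrightarrow{L^r(M)}$ for a.e.\ $t\in\R$, with the exceptional null set independent of $r$, as already recorded in the excerpt. Combining this with the uniform bound
\begin{equation*}
\norm{\mathcal{E}_\tau u(t)}_{\overrightarrow{L^\infty(M)}}
\le e^{\varepsilon^2\tau}\norm{u(t)}_{\overrightarrow{L^\infty(M)}}
\le e^{\varepsilon^2}\norm{u}_{L^\infty(\R;\overrightarrow{L^\infty(M)})},
\quad \tau\in (0,1],
\end{equation*}
and with the compact time-support of $u$, the integrand $\norm{\mathcal{E}_\tau u(t)-u(t)}_{\overrightarrow{L^r(M)}}^r$ is dominated (uniformly in $\tau\le 1$) by a constant multiple of $\car{[-1,T+1]}(t)$, which is integrable on $\R$. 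The dominated convergence theorem in the time variable then upgrades the a.e.\ pointwise convergence to convergence in $L^r(\R;\overrightarrow{L^r(M)})$, completing the proof. The only delicate point is the simultaneous $L^r$-convergence of $\mathcal{E}_\tau u(t)$ for all $r$, but this is precisely what the $r$-independence of the null set in the excerpt guarantees; everything else is a routine Young-inequality plus dominated-convergence manipulation.
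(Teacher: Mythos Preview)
Your proof is correct and follows essentially the same approach as the paper: the decomposition $u_\tau-u=\eta_\tau\ast_t(\mathcal{E}_\tau u-u)+(\eta_\tau\ast_t u-u)$ is exactly the paper's splitting $u_\tau-u=(u_\tau-\mathcal{J}_\tau)+(\mathcal{J}_\tau-u)$ with $\mathcal{J}_\tau:=\eta_\tau\ast_t u$, and both terms are handled by the same Young-inequality-plus-dominated-convergence and local-chart mollification arguments, respectively. The only cosmetic difference is that you dominate via the $L^\infty$ bound $e^{\varepsilon^2\tau}\norm{u(t)}_{\overrightarrow{L^\infty(M)}}$ together with $\mathrm{Vol}(M)=1$, whereas the paper uses the $L^r$ bound $e^{\varepsilon^2|1-2/r|\tau}\norm{u(t)}_{\overrightarrow{L^r(M)}}$ directly.
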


Lemma \ref{lem:conv-utau} immediately implies 
$$
u_{\tau_n}(\phi) \ton u(\phi) 
\quad \text{in $L^r(\R\times M)$}, 
\qquad r\in [1,\infty).
$$
Using this, the goal is to verify that
\begin{equation}\label{eq:unnamed}
\int_M \rho^{\tau_n} u_{\tau_n}(\phi)\,dV_h 
\weakn \int_M \rho u(\phi)\,dV_h
\quad \text{in $L^2(\Omega_T)$}.
\end{equation}
Fix an arbitrary $\psi\in L^2(\Omega_T)$. Then
\begin{align*}
I(n) & :=\int_{\Omega_T}\left(\int_M 
\rho^{\tau_n}u_{\tau_n}(\phi) \,dV_h \right)
\psi\, \P\otimes ds
-\int_{\Omega_T}\left(\int_M \rho u(\phi)
\,dV_h \right) 
\psi\, \P\otimes ds
\\ & = \int_{\Omega_T}
\left(\int_M \rho^{\tau_n}
\left(u_{\tau_n}(\phi)-u(\phi)\right)
\,dV_h\right) \psi\, \P\otimes ds
\\ & \qquad 
+\int_{\Omega_T} \left(\int_M 
\left(\rho^{\tau_n}-\rho\right)
u(\phi)\,dV_h \right) \psi\, \P\otimes ds
=: I_1(n) + I_2(n).
\end{align*}
By repeated applications of the 
Cauchy-Schwarz inequality, 
\begin{align*}
\bigl| I_1(n)  \bigr| 
& \leq \int_{\Omega_T} \abs{\psi} 
\norm{\rho^{\tau_n}(s)}_{L^2(M)} 
\norm{(u_{\tau_n}-u)(\phi)}_{L^2(M)}
\, \P\otimes ds
\\ & \leq 
\int_0^T \norm{\psi(s)}_{L^2(\Omega)} 
\norm{\rho^{\tau_n}(s)}_{L^2(\Omega\times M)} 
\norm{(u_{\tau_n}-u)(\phi)}_{L^2(M)}\, ds
\\ & \leq 
\norm{\rho^{\tau_n}}_{L^\infty\left([0,T]
;L^2(\Omega\times M)\right)}
\int_0^T \norm{\psi(s)}_{L^2(\Omega)} 
\norm{(u_{\tau_n}-u)(\phi)}_{L^2(M)}\, ds
\\ & \leq C \norm{\psi}_{L^2(\Omega_T)}
\norm{(u_{\tau_n}-u)(\phi)}_{L^2([0,T]\times M)}\ton 0.
\end{align*}

For the $I_2$ term it is 
enough to check that $u(\phi)\psi\in 
L^1\left([0,T];L^2(\Omega\times M)\right)$, because 
in that case we would get $I_2(n)\to 0$ directly 
from the definition of the weak convergence 
$\rho^\tau\weakn \rho$. 
In point of fact, we have 
\begin{align*}
\int_0^T & 
\left(\int_{\Omega\times M}\abs{u(\phi)\psi}^2
\,dV_h\,d \P\right)^{1/2}\, ds
\\ & = \int_0^T\norm{\psi(s)}_{L^2(\Omega)}
\left( \int_M \abs{u(\phi)}^2\,dV_h
\right)^{1/2} \, ds
\\ & \leq \int_0^T \norm{\psi(s)}_{L^2(\Omega)}
\norm{\phi}_{C^1(M)}
\norm{u(s)}_{\overrightarrow{L^\infty(M)}}\, ds
\\ & \leq  \norm{\phi}_{C^1(M)}
\norm{u}_{L^\infty\left([0,T];
\overrightarrow{L^\infty(M)}\right)}
\int_0^T \norm{\psi(s)}_{L^2(\Omega)}\, ds
\\ & \leq  \norm{\phi}_{C^1(M)}
\norm{u}_{L^\infty\left([0,T];
\overrightarrow{L^\infty(M)}\right)}\sqrt{T}
\norm{\psi}_{L^2(\Omega_T)} \,ds<\infty. 
\end{align*}
Therefore $I_2(n)\ton 0$, and thus $I(n)\ton 0$. This 
concludes the proof of \eqref{eq:unnamed}.

We may now pass to the limit in the 
SPDE \eqref{eq:L2-weak-sol-Ito-rho-tau} 
with $\tau=\tau_n$, to conclude that 
$\rho$ satisfies \eqref{eq:L2weak-sol-Ito} 
for a.e.~$(\omega,t)\in\Omega_T$.  Since the 
right-hand-side of \eqref{eq:L2weak-sol-Ito} 
clearly defines a continuous stochastic process, 
the process $\int_M\rho(\cdot,x)\phi(x)\,dV_h(x)$ has 
a continuous modification. In other words, we have 
constructed a weak $L^2$ solution to \eqref{eq:target} 
under the assumption that $\rho_0\in C^\infty(M)$.

\subsection{General initial datum, $\rho_0\in L^2(M)$}

To finish off the proof, we must remove the 
smoothness assumption on the initial datum $\rho_0$. 
We follow the same strategy as above, but this time 
it is simpler since we have to regularize functions 
(not vector fields) defined on the manifold $M$. 

Given $\rho_0\in L^2(M)$, we employ 
the heat semigroup $\seq{P_\tau}_{\tau>0}$ 
on functions to regularize $\rho_0$, see 
Section \ref{sec:appendix} for details. The following 
properties are known: 
\begin{align*}
& P_\tau \rho_0\in C^\infty(M),
\quad \norm{P_\tau \rho_0}_{L^2(M)}
\leq \norm{\rho_0}_{L^2(M)},
\\ & \quad \text{and} \quad
P_\tau \rho_0 \stackrel{L^2(M)}{\longrightarrow} 
\rho_0 \quad \text{as $\tau\downarrow 0$}. 
\end{align*}
According to the previous subsection, there exists a 
unique weak $L^2$ solution $\rho^\tau$ 
of \eqref{eq:target} with initial datum 
$P_\tau\rho_0\in C^\infty(M)$, irregular 
velocity field $u$ satisfying the 
assumptions listed in Theorem \ref{thm:main-result}, 
and noise vector fields $a_i$ given 
by Lemma \ref{lem:ellipticity}. 
As before, Proposition \ref{thm:L2-est-uniq-weak} supplies 
the estimate $\norm{\rho^\tau(t)}_{L^2(\Omega\times M)}
\leq  C_T\norm{\rho_0}_{L^2(M)}$ for all $t\in [0,T]$, 
where the constant $C_T$ is independent of $\tau$. 
This implies that $u^\tau$ is 
weakly compact, i.e., there exist a subsequence 
$\seq{\tau_n}_{n\ge 1}\subset (0,1)$ 
with $\tau_n\ton 0$ and a limit $\rho\in L^\infty\left([0,T];
L^2(\Omega\times M)\right)$ such that
$$
\rho^{\tau_n}\stackrel{\star}{\rightharpoonup} 
\rho \quad\text{in $L^\infty\left([0,T]; 
L^2(\Omega\times M)\right)$}.
$$
For any $\phi\in C^\infty(M)$, we have trivially that
$$
\int_M P_{\tau_n}\rho_0\, \phi\, dV_h  
\weakn \int_M \rho_0\,\phi\, dV_h 
\quad\text{in $L^2(\Omega_T)$}.
$$
The limit of the remaining terms 
in \eqref{eq:L2-weak-sol-Ito-rho-tau} 
can be computed as before, which in the end  
leads to the conclusion that $\rho$ is 
a weak $L^2$ solution of \eqref{eq:target}. 

\subsection{Proof of Lemma \ref{lem:conv-utau}}

To conclude the proof of Theorem \ref{thm:main-result}, 
we need to verify the validity of Lemma \ref{lem:conv-utau}. 
Define for convenience
$$
\mathcal{J}_\tau(t,x):=\int_\R u(t',x)\eta_\tau(t-t')\,dt', 
\quad t\in\R,\,\, x\in M.
$$
We have
$$
\abs{u_\tau(t,x)-\mathcal{J}_\tau(t,x)}_h 
\leq \int_\R\abs{\mathcal{E}_\tau 
u(t',x)-u(t',x)}_h\eta_\tau(t-t')\,dt'.
$$
By basic convolution estimates on $\R$, 
for any $r\in [1,\infty)$,
$$
\norm{\,\abs{u_\tau(\cdot,x)
-\mathcal{J}_\tau(\cdot,x)}_h\,}_{L^r(\R)}
\leq 
\norm{\,\abs{\mathcal{E}_\tau u(\cdot,x)
-u(\cdot,x)}_h\,}_{L^r(\R)},
\quad x\in M,
$$
where $\norm{\, \abs{\cdot}_h \,}_{L^r(\R)}^r
=\int_{\R} \abs{\cdot}_h^r\,dt\,$. Thus,
\begin{align*}
&\norm{u_\tau
-\mathcal{J}_\tau}_{L^r\left(\R;\overrightarrow{L^r(M)}\right)}
\\ & \qquad \leq \norm{\mathcal{E}_\tau u
-u}_{L^r\left(\R;\overrightarrow{L^r(M)}\right)}
= \left(\int_\R\norm{\mathcal{E}_\tau u(t,\cdot)
-u(t,\cdot)}_{\overrightarrow{L^r(M)}}^r 
\,dt \right)^{1/r}.
\end{align*}
Observe that the integrand in the $dt$-integral 
converges to zero as $\tau\downarrow 0$ for a.e.~$t\in\R$. 
Furthermore, see Section \ref{sec:appendix},
$$
\norm{\mathcal{E}_\tau u(t,\cdot)
-u(t,\cdot)}_{\overrightarrow{L^r(M)}}
\leq \bigl(\exp\left(\varepsilon^2\abs{1-2/r}\tau\right) 
+ 1\bigr) \norm{u(t,\cdot)}_{\overrightarrow{L^r(M)}},
$$
which is integrable on $\R$ by assumption on $u$ 
(here $-\varepsilon$ is a lower bound of 
the Ricci tensor on $M$). Therefore, by means of the 
dominated convergence theorem, we conclude 
that $u_\tau - \mathcal{J}_\tau\to 0$ 
in $L^r\left(\R;\overrightarrow{L^r(M)}\right)$ 
as $\tau\downarrow 0$.

Hence, with an error term 
$o(1)\to 0$ as $\tau\downarrow 0$,
$$
u_\tau -u=\mathcal{J}_\tau - u + o(1),
$$
so it remains to verify that $\mathcal{J}_\tau - u$ converges 
to zero in $L^r_t\overrightarrow{L^r_x}$. 
Locally we have $\abs{\mathcal{J}_\tau(t,x) - u(t,x)}_h
\leq C(M,h) \abs{\mathcal{J}_\tau(t,x) 
- u(t,x)}_{\operatorname{eucl}}$. Since the right-hand 
side converges to zero in $L^r(\R)$ for all $x\in M$, it 
follows that the same holds for the left-hand side. 
We have 
\begin{align*}
\int_\R\int_M & 
\abs{\mathcal{J}_\tau(t,x) - u(t,x)}_h^rdV_h(x)\,dt 
= \int_M\int_\R \abs{\mathcal{J}_\tau(t,x) - u(t,x)}_h^r 
\,dt \,dV_h(x)
\\ & = \sum_{\kappa}\int_M \alpha_\kappa(z)
\left(\int_\R \abs{\mathcal{J}_\tau(t,z)
- u(t,z)}_h^r \,dt\right)\,\abs{h_\kappa(z)}^{1/2}\, dz,
\end{align*}
where $(\alpha_\kappa)_\kappa$ is an arbitrary 
smooth partition of unity. Arguing as we did above, 
$\norm{\, \abs{\mathcal{J}_\tau(\cdot,x)
-u(\cdot,x)}_h \, }_{L^r(\R)}^r
\leq 2^r\norm{\, \abs{u(\cdot,x)}_h \, }_{L^r(\R)}^r$ 
for any $x\in M$, and hence, by means of the 
dominated convergence theorem, 
$\mathcal{J}_\tau - u\to 0$ in 
$L^r\left(\R;\overrightarrow{L^r(M)}\right)$. 
This concludes the proof of Lemma \ref{lem:div-utau}.


\section{Appendix}
\label{sec:appendix}


\subsection{Heat kernel on functions}
\label{subsec:heat-kernel-func}
We collect here some relevant properties 
of the heat kernel $H$ on $(M,h)$, that is, 
the fundamental solution of the heat operator
$$
L=\partial_t -\Delta_h.    
$$
\begin{enumerate}
\item the mapping $(x,y,t)\mapsto H(x,y,t)$ belongs 
to $C^\infty(M\times M\times (0,\infty))$, is 
symmetric in $x$ and $y$ for any $t>0$, and is positive. 

\item For any function $w\in L^r(M)$, 
$r\in [1,\infty]$, setting
\begin{equation}\label{eq:heat-kernel-prop}
P_t w(x):=\int_M H(x,y,t)w(y)\,dV_h(y), 
\qquad x\in M,  \,\, t>0,
\end{equation}
we have $P_t w\in C^\infty(M)$. Moreover,
$$
\norm{P_tw}_{L^r(M)}
\leq \norm{w}_{L^r(M)}, 
\qquad t> 0, 
$$
and, for any finite $r\ge 1$,
\begin{equation*}
P_t w \stackrel{L^r(M)}{\longrightarrow} w 
\quad \text{as $t\to 0^+$}.
\end{equation*}
\end{enumerate}

For proofs of these basic results, see \cite{Grig}.


\subsection{Heat kernel on forms}
\label{subsec:heat-kernel-forms}

During the proof of Theorem \ref{thm:main-result}, 
we also make use of the heat kernel on forms. 
We recall here its most salient 
properties without proofs, referring 
to \cite{Elworthy,Bakry,deRham,GK0} for details. 
Firstly, we define the space $\Ltwo$ as the closure 
of the space of smooth 1-forms on $M$ with respect to the norm
$$
\left(\int_M \abs{\zeta}_h^2\,dV_h\right)^{1/2},
\qquad \text{where $\abs{\zeta}_h^2
=h^{ab}\zeta_a\zeta_b\,$ locally.}    
$$

Denote by $\seq{\mathcal{E}_\tau}_{\tau\geq 0}$ 
the de Rham-Hodge semigroup on 1-forms, 
associated to the de Rham-Hodge Laplacian, 
which by elliptic regularity has a kernel 
$e(\tau,\cdot,\cdot)$. More precisely, for any 
$\tau>0$, $e(\tau,\cdot,\cdot)$ is a double 
form on $M\times M$, such that for any 1-form 
$\zeta\in \Ltwo$ and any $P\in M$,
$$
\left(\mathcal{E}_\tau\zeta\right)(P)
= \int_M e(\tau,P,Q)\wedge\star_Q\,\zeta(Q), 
$$
where $\star$ is the Hodge star operator, 
$Q$ is a point in $M$, and $\wedge$ is the 
wedge product between forms. Concretely, in a 
coordinate patch $\left(U,(x^i)\right)$ around $P$ 
and in a coordinate patch $\left(U',(y^j)\right)$ around $Q$, 
if we write the double form $e(\tau,\cdot,\cdot)$ as
$$
e(\tau,x,y)=\left(e(\tau,x,y)_{ij}\,dx^i\right)\, dy^j
$$
and $\zeta$ as $\zeta(y)=\zeta_k(y)\, dy^k$, then 
the above integral becomes
$$
(\mathcal{E}_\tau\zeta)(x)= \left(\int_M e(\tau,x,y)_{ij}
\,h^{jk}(y)\,\zeta_k(y) \,dV_h(y)\right)\, dx^i. 
$$
For a vector field $V$, we denote by $V^\flat$ 
the 1-form obtained by lowering an index 
via the metric $h$; analogously, 
for a 1-form $\zeta$, we denote 
by $\zeta^\sharp$ the vector field obtained 
by raising an index via the metric.

We define for a vector field $V$ 
the following quantity
$$
\mathcal{E}_\tau V := \left( 
(\mathcal{E}_\tau V^\flat) \right)^\sharp. 
$$ 

Let $\varepsilon\geq 0$ be a constant 
such that $\operatorname{Ric}_M\geq 
-\varepsilon^2 h$, where $\operatorname{Ric}_M$ 
denotes the Ricci tensor of $(M,h)$ 
(the constant $\varepsilon$ clearly 
exists because $M$ is compact). 
We have the following remarkable properties: 
for any $V\in \overrightarrow{L^p(M)}$, $p\in [1,\infty]$,
\begin{itemize}

\item $\mathcal{E}_\tau V$ is a smooth 
vector field, for any $\tau>0$,

\item $\mathcal{E}_\tau V\to V$ 
in $\overrightarrow{L^p(M)}$ as 
$\tau\downarrow 0$, for any finite $p$,

\item $\norm{\mathcal{E}_\tau V}_{\overrightarrow{L^p(M)}}
\leq e^{\varepsilon^2\abs{1-\frac{2}{p}}\tau}\,
\norm{V}_{\overrightarrow{L^p(M)}}$, 
for any $\tau\geq 0$ \cite{Bakry}. 
\end{itemize}

Furthermore, in analogy with \eqref{eq:heat-kernel-prop}, 
the following local expression holds:
$$
(\mathcal{E}_\tau V)(x)= 
\left(\int_M e(\tau,x,y)_{ij} 
\,V^{j}(y) \,dV_h(y)
\right)h^{ik}(x) \, \partial_k. 
$$
 
Finally, one can show that (see \cite{GK0} for details)
\begin{equation*}
\begin{split}
\Div \,\mathcal{E}_\tau V(x) 
& = \int_M  \partial_k e(\tau,x,y)_{ij}\,V^{j}(y) 
\,dV_h(y)\,h^{ik}(x)
\\ & \qquad 
+\int_M e(\tau,x,y)_{ij}\,V^{j}(y)
\,dV_h(y)\,\partial_k h^{ik}(x)
\\ & \qquad\qquad
+\Gamma^\rho_{\rho k}(x)
\int_M e(\tau,x,y)_{ij}\,V^{j}(y) 
\,dV_h(y)\, h^{ik}(x), 
\end{split}
\end{equation*}
in local coordinates $x$ (differentiation is 
carried out in $x$).


\subsection{Proof of Proposition \ref{prop:anisotropic-embed}}
\label{subsec:proof-anisotropic-Sobolev}

Let $\seq{G_i}_{i=1}^R$ be a finite covering of $M$ 
and $\seq{(G_i,\phi_i)}_{i=1}^R$ the corresponding charts. 
Without loss of generality, we may assume that 
$\phi_i(G_i)=B$ for all $i$, where $B$ is 
the unit ball in $\R^d$. Let $\seq{\alpha_i}_{i=1}^R$ 
be a smooth partition of unity subordinate 
to $\{G_i\}_{i=1}^R$. On $\supp \alpha_i$ 
the metric tensor $h$ and its derivatives 
of all orders are bounded in the system 
of coordinates corresponding to the chart 
$(G_i,\phi_i)$. Define, for $i=1,\ldots,R$,
$$
\psi_i:[0,T]\times G_i\to [0,T]\times B, 
\qquad (t,P)\mapsto (t,\phi_i(P)),
$$
which is a finite smooth atlas for $[0,T]\times M$. 
Observe that $[0,T]\times G_i$ is 
diffeomorphic to $[0,T]\times B$. 
Moreover, $\tilde{\alpha}_i:[0,T]\times M\to [0,1]$, 
$\tilde{\alpha}_i(t,P):=\alpha_i(P)$ 
is a smooth partition of unity 
subordinate to $\seq{[0,T]\times G_i}_{i=1}^R$.

Let $w\in W^{1,2,p}([0,T]\times M)$. 
Then clearly, in view of the discussion above,
\begin{align*}
	w \in W^{1,2,p}([0,T]\times M) 
	& \Longleftrightarrow 
	\tilde{\alpha}_iw \in W^{1,2,p}([0,T]\times M), 
	\quad \forall i
	\\ & \Longleftrightarrow 
	\left(\tilde{\alpha}_iw \right)\circ \psi^{-1}_i 
	\in W^{1,2,p}([0,T]\times B), \quad \forall i,
\end{align*}
where $W^{1,2,p}([0,T]\times B)$ denotes 
the more familiar Euclidean anisotropic Sobolev 
space \cite{Besov}, which can be defined 
similarly via \eqref{eq:anisotropic-norm} 
with $dV_h=dx$ and $\nabla^k=\nabla^k_{\operatorname{eucl}}$.

For this space we have the compact embedding
$$
W^{1,2,p}([0,T]\times B) 
\subset \subset 
C^{0,1-\frac{1+d}{p}}
\left([0,T]\times \bar{B}\right)
$$
and 
$$
\partial_{x_j} \left(\tilde{\alpha_i}w \right)
\circ \psi^{-1}_i\in 
C^{0,1-\frac{1+d}{p}}\left([0,T]\times \bar{B}\right), 
\quad j=1,\ldots,d,
$$
provided $p>d+2$, see \cite{Rabier} for example. 
In particular, for all $i$,
\begin{align*}
\norm{(\tilde{\alpha_i}w)\circ 
\psi^{-1}_i}_{C^0\left([0,T]\times \bar{B}\right)} 
+ & \norm{\nabla_{\operatorname{eucl}}
\left(\tilde{\alpha_i}w\right)
\circ \psi^{-1}_i}_{C^0\left([0,T]\times \bar{B}\right)}
\\ & \leq C(p,d,B) \norm{\left(\tilde{\alpha_i}w\right)
\circ \psi^{-1}_i}_{W^{1,2,p}([0,T]\times B)}.
\end{align*}
Exploiting the boundedness 
of the metric tensor, we get
\begin{align*}
&\norm{\tilde{\alpha_i}w}_{C^0([0,T]\times M)} 
+\norm{\nabla(\tilde{\alpha_i}w)}_{C^0([0,T]\times M)}
\\ & \quad =\norm{\tilde{\alpha_i}w}_{C^0([0,T]\times G_i)} 
+\norm{\nabla \left(\tilde{\alpha_i}w\right)}_{C^0([0,T]\times G_i)}
\\ & \quad \leq \norm{\left(\tilde{\alpha_i}w\right)
\circ \psi^{-1}_i}_{C^0\left([0,T]\times \bar{B}\right)} 
+C_i\norm{\nabla_{\operatorname{eucl}}
\left(\tilde{\alpha_i}w\right)
\circ \psi^{-1}_i}_{C^0\left([0,T]\times \bar{B}\right)}
\\ & \quad \leq C(p,d,B,i) \norm{\left(\tilde{\alpha_i}w\right)
\circ \psi^{-1}_i}_{W^{1,2,p}([0,T]\times B)}
\\ & \quad \leq C'(p,d,B,i) 
\norm{\tilde{\alpha_i}w}_{W^{1,2,p}([0,T]\times M)}.
\end{align*}
Therefore, by the triangle inequality 
and summing over $i$,
\begin{align*}
&\norm{w}_{C^0([0,T]\times M)}
+\norm{\nabla w}_{C^0([0,T]\times M)}
\\ & \quad \leq C(p,d,M)\sum_{i=1}^R
\norm{\tilde{\alpha_i}w}_{W^{1,2,p}([0,T]\times M)}
\leq C'(p,d,M)\norm{w}_{W^{1,2,p}([0,T]\times M)},
\end{align*}
where in the last passage we have 
used the fact that the derivatives 
of $\tilde{\alpha}_i$ are bounded. 
The compactness of the embedding 
is now evident.


\subsection{An auxiliary result}
We now prove a useful result about the extension of smooth 
functions, which is used during the 
proof of Proposition \ref{lemma:irregular-test-Gronwall}.
\begin{proposition}[extension of $C^\infty$ functions]
\label{prop:Seeley}
Let $0<S<T$ and consider $w\in C^{\infty}([0,S]\times M)$. 
Then we can extend $w$ to a function 
$v\in C^{\infty}([0,T]\times M)$.
\end{proposition}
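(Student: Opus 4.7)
The plan is to extend $w$ purely in the time variable, treating the spatial point $x \in M$ as a smooth parameter. Concretely, I would construct a linear extension operator $E \colon C^\infty([0,S]) \to C^\infty([0,T])$ that is the identity on $[0,S]$ and is continuous with respect to every $C^k$-norm. Once such an $E$ is available, I define $v(t,x) := (E[w(\cdot,x)])(t)$. Because $E$ acts only in $t$, it commutes with every differential operator in $x$; thus, combining the pointwise smoothness of $x \mapsto w(\cdot, x)$ in $C^k([0,S])$ with the continuity of $E$ on $C^k$ yields that all mixed partial derivatives of $v$ exist and are continuous, giving $v \in C^\infty([0,T] \times M)$. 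The key virtue of this reduction is that it entirely bypasses the manifold structure of $M$: no charts or partitions of unity on $M$ are needed, because the extension is only in the $t$-direction.

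To construct $E$ I would invoke Seeley's reflection-and-sum formula. Pick a cutoff $\phi \in C_c^\infty(\mathbb{R})$ with $\phi \equiv 1$ near $0$ and $\mathrm{supp}\,\phi \subset (-S/2, S/2)$, fix a constant $c > 1$ (e.g.\ $c=2$), and set, for $t \in (S,T]$,
\begin{equation*}
(Ef)(t) \;=\; \sum_{k=1}^{\infty} a_k\, \phi\bigl(c^k(t-S)\bigr)\, f\bigl(S - c^k(t-S)\bigr),
\end{equation*}
extended by $f$ itself on $[0,S]$. The coefficients $\{a_k\}$ are chosen so that the (formal) identities $\sum_k a_k (-c^k)^n = 1$ hold for every integer $n \geq 0$; the existence of such coefficients with controlled growth is the classical content of Seeley's theorem. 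This choice ensures that every one-sided derivative of $Ef$ at $t = S^+$ matches the corresponding derivative of $f$ at $t = S^-$, so $Ef$ joins smoothly to $f$ across $t = S$.

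The main technical obstacle is verifying $C^\infty$ convergence of the series and its term-by-term derivatives. For any fixed $t > S$, only finitely many summands are non-zero because $\phi\bigl(c^k(t-S)\bigr) = 0$ once $c^k(t-S)$ exceeds the support of $\phi$; so pointwise the sum is well-defined. Uniform convergence of each derivative $\partial_t^n (Ef)$ near $t = S^+$ is where the growth of $|a_k|$ plays against the factor $c^{kn}$ produced by differentiation. One way to dispense with this subtlety is to replace the infinite series by a finite truncation adapted to the order of differentiation (the Lions--Hestenes variant, using a fixed finite set of reflections sufficient for any prescribed $C^N$-regularity, then taking $N \to \infty$ via a diagonal/partition-of-unity argument in a shrinking collar of $\{t = S\}$). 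Either way, once $C^\infty$-convergence is secured, $v = E[w]$ is the required extension: it agrees with $w$ on $[0,S] \times M$ by construction and belongs to $C^\infty([0,T]\times M)$ by the commutation argument of the first paragraph.
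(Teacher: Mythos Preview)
Your proposal is correct and takes a genuinely different route from the paper. The paper localizes $w$ to coordinate charts via a squared partition of unity $\{\alpha_i^2\}$, pushes each piece $\tilde\alpha_i w$ down to $[0,S]\times\R^d$, applies Seeley's extension operator $\mathcal{E}\colon C^\infty([0,S]\times\R^d)\to C^\infty(\R\times\R^d)$ in the full Euclidean space--time, multiplies by $\alpha_i\circ\phi_i^{-1}$ to restore compact support in $x$, lifts back to $M$, and sums; the squared partition of unity is what guarantees the glued sum agrees with $w$ on $[0,S]\times M$. Your approach instead applies Seeley's operator purely in the $t$-variable, parametrically in $x\in M$, and recovers joint smoothness from the $C^k$-continuity of $E$ together with the commutation $\partial_x^\alpha E[w(\cdot,x)]=E[\partial_x^\alpha w(\cdot,x)]$. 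This is cleaner: no partition of unity, no gluing, and the manifold structure of $M$ is genuinely irrelevant (charts enter only to make sense of $\partial_x^\alpha$, not to build the extension). The paper's approach, by contrast, is a more mechanical black-box reduction to the Euclidean Seeley theorem as stated in \cite{Seeley}, trading elegance for a shorter verification. One caution: your fallback suggestion of patching Hestenes--Lions $C^N$-extensions via a ``diagonal/partition-of-unity argument in a shrinking collar'' is vague and not obviously correct as written---different $C^N$-extensions need not match smoothly across the collar---so you should rely on the direct Seeley construction, which the paper already cites and which settles the $C^\infty$-convergence issue you flag.
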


\begin{proof}
Let $\seq{G_i}_{i=1}^R$ be a finite covering of $M$ 
and $\seq{(G_i,\phi_i)}_{i=1}^R$ the corresponding charts. 
Without loss of generality, we may assume 
that $\phi_i(G_i)=B$ for all $i$, where $B$ is 
the unit ball in $\R^d$. Let $\seq{\alpha_i}_{i=1}^R$ 
be a squared smooth partition of unity subordinate 
to $\seq{G_i}_{i=1}^R$, such that 
$\sum_{i=1}^R \alpha_i^2=1$. Define, for $i=1,\ldots,R$,
$$
\psi_i:[0,T]\times G_i\to [0,T]\times B, 
\qquad (t,P)\mapsto (t,\phi_i(P)),
$$
which is a finite smooth atlas for $[0,T]\times M$. 
Observe that $[0,T]\times G_i$ is 
diffeomorphic to $[0,T]\times B$. Besides, 
$\tilde{\alpha}_i:[0,T]\times M\to [0,1]$, 
$\tilde{\alpha}_i(t,P):=\alpha_i(P)$ is 
a squared smooth partition of unity subordinate to 
$\seq{[0,T]\times G_i}_{i=1}^R$.

Given $w\in C^{\infty}([0,S]\times M)$, we define 
$\tilde{w}_i\in C^{\infty}([0,S]\times \R^d)$, 
$i=1,\ldots ,R$, by
$$
\tilde{w}_i(t,x):=
\begin{cases}
\left(\tilde{\alpha}_i w\right)\circ \psi_i^{-1},
& \text{for $(t,x)\in [0,S]\times B$},
\\ 0, & \text{otherwise.}
\end{cases}
$$
Observe that for any $t\in [0,S]$, 
$\supp \tilde{w}_i(t,\cdot)\subset 
\supp \alpha_i\circ\phi_i^{-1} $. 
Seeley's extension theorem \cite{Seeley} supplies 
an extension operator
$$
\mathcal{E}:C^{\infty}([0,S]\times \R^d) 
\to C^{\infty}(\R\times \R^d).
$$
Thanks to this, we can build an extension 
$\mathcal{E}\tilde{w}_i$ of $\tilde{w}_i$ 
in $C^{\infty}(\R\times \R^d)$. Set
$$
\bar{w}_i:=\left(\alpha_i\circ\phi_i^{-1}\right) 
\mathcal{E}\tilde{w}_i 
\in C^{\infty}(\R\times \R^d),
$$
and notice that for any $t\in \R$, 
$\supp \bar{w}_i(t,\cdot)\subset 
\supp \alpha_i\circ\phi_i^{-1}$. 

We may lift this function to $M$ by setting
$$
w_i(t,P):=
\begin{cases}
\bar{w}_i(t,\phi(P)) , 
& \text{for $(t,P)\in \R\times \supp \alpha_i$},
\\ 0, & \text{otherwise}.
\end{cases}
$$
Clearly, $w_i\in C^{\infty}(\R\times M)$ 
and for $t\in [0,S]$ we have
\begin{equation*}
w_i(t,P)
=\begin{cases}
\alpha_i^2(P)w(t,P), 
&\text{for $P\in \supp \alpha_i$}
\\ 0, & \text{otherwise.}
\end{cases}
\end{equation*}
Setting $\displaystyle v:=\sum_{i=1}^R w_i\in 
C^{\infty}(\R\times M)\subset C^{\infty}([0,T]\times M)$ 
we have
\begin{equation*}
    v(t,P)=\sum_{i: P\in \supp \alpha_i} 
    \alpha_i^2(P) w(t,P) = w(t,P), 
    \qquad (t,P)\in[0,S]\times M,
\end{equation*}
and thus the desired extension is established.
\end{proof}


\end{document}